\documentclass[12pt, a4paper]{article}

\usepackage{amssymb}
\usepackage{amsmath}
\usepackage{amsthm}
 \usepackage{dsfont}

  \newtheorem{theorem}{Theorem}
 \newtheorem{definition}{Definition}
  \newtheorem{corollary}{Corollary}
    \newtheorem{lemma}{Lemma}    
    \newtheorem{proposition}{Proposition}
        \newtheorem{question}{Question}

\DeclareMathOperator{\Div}{div}
\DeclareMathOperator{\Id}{Id}

\begin{document}

 \title{A remark on weak-strong uniqueness  for suitable weak solutions of the Navier--Stokes equations}
\author{Pierre Gilles Lemari\'e--Rieusset\footnote{LaMME, Univ Evry, CNRS, Universit\'e Paris-Saclay, 91025, Evry, France; e-mail : pierregilles.lemarierieusset@univ-evry.fr}}
\date{}\maketitle

\begin{abstract}
We extend Barker's  weak-strong uniqueness results for the Navier--Stokes equations and consider    a criterion involving Besov spaces and weighted Lebesgue spaces.  
\end{abstract}
 
\noindent{\bf Keywords : } Navier--Stokes equations,   weak-strong uniqueness,  Besov spaces, uniformly locally square integrable functions,  weighted Lebesgue spaces\\

\noindent{\bf AMS classification : } 35K55, 35Q30, 76D05.
 
 \section{The Prodi--Serrin criterion for weak-strong uniqueness}
 In this paper, we are interested in extensions of the Prodi--Serrin weak-strong uniqueness for (suitable) weak Leray solutions of the Navier--Stokes equations. We consider solutions of the Navier--Stokes equations
\begin{equation*}\left\{ \begin{split}&\partial_t \vec u+\vec u\cdot \vec\nabla \vec u=\Delta \vec u-\vec\nabla p\\&
\Div \vec u=0 
\\& \vec u(0,.)=\vec u_0\end{split}\right.\end{equation*} where $\vec u_0$ is a square-integrable divergence-free vector field on the space $\mathbb{R}^3$. 
 
Looking for weak solutions, where the derivatives are taken in the sense of distributions, it is better to write the first line of the system as
$$ \partial_t\vec u+ \Div\,(\vec u\otimes\vec u)=\Delta \vec u-\vec\nabla p.$$ If $\vec u$ is a solution on $(0,T)\times\mathbb{R}^3$ such that $\vec u\in L^\infty((0,T),L^2)$, then the pressure $p$ can be eliminated through the formula
 $$ \Div \,(\vec u\otimes\vec u)+\vec \nabla p= \mathbb{P} (\Div \,(\vec u\otimes\vec u))$$ where $\mathbb{P}$ is the Leray projection operator on solenoidal vector fields:
 $$\mathbb{P}\vec f=-\frac 1{\Delta}\vec\nabla\wedge(\vec\nabla\wedge\vec f).$$ 
 Moreover, $\vec u$ can be represented as a distribution which depends continuously on the time $t$ \cite{Lem16} as
 $$ \vec u=\vec u_0+\int_0^t \Delta\vec u-\mathbb{P}(\Div \,(\vec u\otimes\vec u))\, ds.$$

 Leray \cite{Ler34} proved existence of solutions $\vec u$ on $(0,+\infty)\times\mathbb{R}^3$ such that :
 \begin{itemize}
 \item $\vec u\in L^\infty_t L^2_x\cap L^2_t \dot H^1_x$
 \item $\lim_{t\rightarrow 0^+} \|\vec u(t,.)-\vec u_0\|_2=0$
 \item we have the Leray energy inequality
\begin{equation}\label{eqler} \|\vec u(t,.)\|_2^2+2\int_0^t \|\vec\nabla\otimes\vec u\|_2^2\, ds\leq \|\vec u_0\|_2^2\end{equation}
 \end{itemize} Such solutions are called \emph{Leray solutions}\footnote{Remark  that the continuity at $t=0$ of $t\mapsto \vec u(t,.)$  in $L^2$ norm is a consequence of the Leray inequality (\ref{eqler})}.
His proof is based on a compactness criterion, it does not provide any clue on the  uniqueness of the solution to the Cauchy initial value problem.

 A classical case of uniqueness of Leray weak solutions is the weak-strong uniqueness criterion described by Prodi and Serrin \cite{Pro59, Ser63}:
if $\vec u_0\in L^2$ and if the Navier-Stokes equations have a solution $\vec u$ on $(0,T)$  such that $$\vec u\in   L^p_t L^q_x\text{ with } \frac 2 p+\frac 3 q\leq1 \text{ and } 2\leq p\leq +\infty$$ then, if $\vec v$ is a Leray solution with the same initial value $\vec u_0$, we have $\vec u=\vec v$ on $(0,T)$. Let us remark that the existence of such a solution $\vec u$ restricts the range of the initial value $\vec u_0$: as a matter of fact, when $2<p<+\infty$, existence of  a time $T>0$ and of a solution $\vec u\in L^p_t L^q_x$ is equivalent  to the fact that $\vec u_0$ belongs to the Besov space $B^{-\frac 2 p}_{q,p}$ (see Theorem \ref{equivser} below). 

We will see that a corollary of Barker's theorem \cite{Bar18} shows the following extension of the criterion:  if $\vec u_0\in L^2$ and if the Navier-Stokes equations have a solution $\vec u$ on $(0,T)$  such that $$\sup_{0<t<T} t^{\frac 1 p}\| \vec u\|_q<+\infty \text{ with }   \frac 2 p+\frac 3 q\leq 1 \text{ and } 2< p<+\infty$$ and with
$$\lim_{t\rightarrow 0}t^{\frac 1 p}\| \vec u\|_q=0 \text{ if }  \frac 2 p+\frac 3 q\leq 1 $$ then, if $\vec v$ is a Leray solution with the same initial value $\vec u_0$, we have $\vec u=\vec v$ on $(0,T)$. Let us remark again that the existence of such a time $T$ and such a solution $\vec u$ is equivalent  to the fact that $\vec u_0$ belongs to the Besov space $B^{-\frac 2 p}_{q,\infty}\cap {\rm bmo}^{-1}_0$ (see  Definition \ref{bmozero} and Theorem \ref{equivbar} below). 
 
The space ${\rm bmo}^{-1}$ was introduced in 2001  by Koch and Tataru  \cite{KoT01}  for the study of mild solutions to the Navier--Stokes problem.  Let us recall the characterization of ${\rm bmo}^{-1}$ through the heat kernel \cite{KoT01, Lem02}:
\begin{proposition}$\ $\\ 
For $0<T<\infty$, define
\[ \|\vec u\|_{X_T}= \sup_{0<t<T} \sqrt t \|\vec u(t,.\|_\infty + \sup_{0<t<T, x_0\in \mathbb{R}^3}  (t^{-3/2} \int_0^t \int_{B(x_0,\sqrt t)} \vert \vec u(s,y)\vert^2\, dy\, ds)^{1/2}.
\] Then $\vec u_0\in {\rm bmo}^{-1}$ if and only if $(e^{t\Delta}\vec u_0)_{0<t<T}\in X_T$ (with equivalence of the norms $\|\vec u_0\|_{{\rm bmo}^{-1}}$ and $\|e^{t\Delta}\vec u_0\|_{X_T}$.
\end{proposition}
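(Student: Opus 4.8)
\noindent\emph{Proof strategy.} The plan is to prove the two implications separately, using the description of ${\rm bmo}^{-1}$ as the space of divergences $\Div\vec F$ of vector fields whose components lie in the local space ${\rm bmo}$; the substance of the statement is then that $|e^{s\Delta}\vec u_0(y)|^2\,dy\,ds$ should be a Carleson measure on $\mathbb R^3\times(0,T)$, which is the local counterpart of the Koch--Tataru theorem. Two points can be settled first. As $|e^{t\Delta}\vec u_0|^2$ is a non-negative subsolution of the heat equation on $(0,\infty)\times\mathbb R^3$, Moser's local boundedness estimate on the parabolic cylinder $(0,t_0)\times B(x_0,\sqrt{t_0})$ gives $t_0\,|e^{t_0\Delta}\vec u_0(x_0)|^2\lesssim t_0^{-3/2}\int_0^{t_0}\int_{B(x_0,\sqrt{t_0})}|e^{s\Delta}\vec u_0(y)|^2\,dy\,ds$, so the term $\sup_{0<t<T}\sqrt t\,\|\vec u(t,\cdot)\|_\infty$ in $\|\vec u\|_{X_T}$ is already dominated by the Carleson term and only the latter must be analysed. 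Moreover, changing the value of $T$ alters $\|e^{t\Delta}\vec u_0\|_{X_T}$ only by a bounded factor, so there is no loss in fixing $T$ once and for all.

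For the implication ${\rm bmo}^{-1}\Rightarrow X_T$, I would write $\vec u_0=\Div\vec F$ with the components of $\vec F$ in ${\rm bmo}$, so that $e^{s\Delta}\vec u_0=\Div(e^{s\Delta}\vec F)$, and reduce to bounding $\partial_j e^{s\Delta}F$ over a Carleson box $(0,R^2)\times B(x_0,R)$ with $R\le\sqrt T$ for a single scalar $F\in{\rm bmo}$, by means of the classical splitting $F=c+F^{\rm loc}+F^{\rm far}$, where $c$ is the mean of $F$ over $2B$, $F^{\rm loc}=(F-c)\mathbf 1_{2B}$ and $F^{\rm far}=(F-c)\mathbf 1_{(2B)^c}$. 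The constant $c$ is annihilated by $\partial_j e^{s\Delta}$; the local part is handled by combining the Plancherel identity $\int_0^\infty\|\vec\nabla e^{s\Delta}h\|_2^2\,ds=\frac12\|h\|_2^2$ with the $L^2$ form of John--Nirenberg, $\|F^{\rm loc}\|_2^2\lesssim R^3\|F\|_{{\rm bmo}}^2$; and the far part is handled by the pointwise Gaussian bound on $\nabla_x G_s(x-y)$ together with the logarithmic bound $|(F)_{2^kB}-(F)_{2B}|\lesssim k\,\|F\|_{{\rm bmo}}$, the Gaussian decay being strong enough both to sum the contributions of the annuli and to absorb, at the large scales, the passage from John--Nirenberg to the $L^2_{\rm uloc}$ part of the ${\rm bmo}$ norm. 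Adding the three pieces and dividing by $R^3$ gives $\|e^{s\Delta}\vec u_0\|_{X_T}\lesssim\|\vec F\|_{{\rm bmo}}$, hence $\|e^{s\Delta}\vec u_0\|_{X_T}\lesssim\|\vec u_0\|_{{\rm bmo}^{-1}}$ after taking the infimum over the representations $\vec u_0=\Div\vec F$.

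For the converse, which I expect to be the more delicate half, one must manufacture a field $\vec F$ with $\Div\vec F=\vec u_0$ and components in ${\rm bmo}$. I would start from the reproducing identity $\vec u_0=-\Div\int_0^T\vec\nabla e^{s\Delta}\vec u_0\,ds+e^{T\Delta}\vec u_0$, which comes from integrating $\frac{d}{ds}e^{s\Delta}\vec u_0=\Delta e^{s\Delta}\vec u_0=\Div(\vec\nabla e^{s\Delta}\vec u_0)$. That the principal term $-\int_0^T\vec\nabla e^{s\Delta}\vec u_0\,ds$ has its components in ${\rm bmo}$ would be obtained from the duality between ${\rm bmo}$ and the local Hardy space $h^1$ together with the tent-space atomic machinery of Coifman--Meyer--Stein: this reduces the matter to testing the field against $h^1$-atoms, which after an integration by parts becomes a bound for averages of $|\sqrt s\,\vec\nabla e^{s\Delta}\vec u_0(y)|^2\,\frac{dy\,ds}{s}$ over Carleson boxes, and these averages are controlled by the assumed Carleson bound on $|e^{s\Delta}\vec u_0|^2$ via the caloric regularity estimate $s\,|\vec\nabla e^{s\Delta}\vec u_0(x)|^2\lesssim s^{-5/2}\int_{s/4}^{s}\int_{B(x,\sqrt s)}|e^{\tau\Delta}\vec u_0(y)|^2\,dy\,d\tau$. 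The residual low-frequency term $e^{T\Delta}\vec u_0$, a bounded smooth function spectrally concentrated at frequencies $\lesssim T^{-1/2}$, contributes to the ${\rm bmo}^{-1}$ norm an amount controlled by $\|e^{T\Delta}\vec u_0\|_\infty\lesssim T^{-1/2}\|e^{t\Delta}\vec u_0\|_{X_T}$; collecting everything gives $\|\vec u_0\|_{{\rm bmo}^{-1}}\lesssim\|e^{t\Delta}\vec u_0\|_{X_T}$.

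I expect the main obstacle to be, within this converse implication, precisely the handling of the low-frequency (equivalently, large-scale) part: one has to make sure that the contributions of scales $s\gtrsim T$ land in the \emph{inhomogeneous} space ${\rm bmo}$ rather than merely in ${\rm BMO}$, and this is exactly the purpose served by the extra term $\sup_{0<t<T}\sqrt t\,\|\vec u(t,\cdot)\|_\infty$ in the definition of $\|\cdot\|_{X_T}$. The first implication and the two preliminary reductions are routine, and the complete argument can be found in \cite{KoT01} and \cite{Lem02}.
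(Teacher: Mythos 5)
First, note that the paper itself does not prove this Proposition: it is recalled as a known characterization with references to \cite{KoT01, Lem02}, so your proof can only be compared with the standard arguments in those sources. Your overall architecture is indeed the standard one: reduction of the $\sup_{0<t<T}\sqrt t\,\|\cdot\|_\infty$ term to the Carleson term by interior (sub)caloric estimates, the splitting $F=c+F^{\rm loc}+F^{\rm far}$ with Plancherel plus John--Nirenberg for the local part and Gaussian decay against logarithmic growth of averages for the far part, and, for the converse, the reproducing identity $\vec u_0=e^{T\Delta}\vec u_0-\Div\int_0^T\vec\nabla e^{s\Delta}\vec u_0\,ds$ combined with tent-space/$h^1$ duality and the caloric regularity estimate for $\vec\nabla e^{s\Delta}\vec u_0$. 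All of these steps are sound.

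There is, however, one genuine flaw, and it sits exactly at the point you flag as delicate: your starting description of the \emph{local} space ${\rm bmo}^{-1}$ as the set of divergences $\Div\vec F$ with components of $\vec F$ in ${\rm bmo}$ is not correct, and the converse implication does not close with it. A nonzero constant $\vec u_0=c$ satisfies $\|e^{t\Delta}\vec u_0\|_{X_T}\leq C(1+\sqrt T)\vert c\vert<\infty$, hence belongs to ${\rm bmo}^{-1}$ in the sense of the Proposition; but it cannot be written as $\Div\vec F$ with $\vec F\in{\rm bmo}$: since ${\rm bmo}$ functions have uniformly bounded averages over unit balls, testing against a cutoff $\chi_R$ ($\chi_R=1$ on $B(0,R)$, $\vert\vec\nabla\chi_R\vert\lesssim 1/R$) gives $\vert\langle\Div\vec F,\chi_R\rangle\vert=\vert\langle\vec F,\vec\nabla\chi_R\rangle\vert\lesssim R^2$, whereas $\int c\,\chi_R\,dx\sim c\,R^3$. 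So $\Div({\rm bmo}^3)$ is a strict subclass of the Carleson class, and the homogeneous identification ${\rm BMO}^{-1}=\Div({\rm BMO})$ does not transfer verbatim to the local spaces. Concretely, in your converse the bounded remainder $e^{T\Delta}\vec u_0$ cannot be absorbed into a divergence of ${\rm bmo}$ fields, so controlling $\|e^{T\Delta}\vec u_0\|_\infty$ does not finish the proof under your announced definition. The repair is the one your own reproducing identity suggests: the local space must be described as ${\rm bmo}^{-1}=L^\infty+\Div({\rm bmo}^3)$ (equivalently, one keeps a separate low-frequency $L^\infty$ component, which embeds in the Carleson class trivially since $\|e^{t\Delta}h\|_\infty\leq\|h\|_\infty$). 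With that corrected description, your forward direction needs only the trivial extra estimate for the $L^\infty$ part, your converse closes exactly as you outline, and the argument then coincides with the proofs in \cite{KoT01} and \cite{Lem02}.
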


Recall that  the differential Cauchy problem for  Navier--Stokes equations reads as
\begin{equation*}\left\{ \begin{split}&\partial_t \vec u+\vec u.\vec\nabla \vec u=\Delta \vec u-\vec\nabla p\\&
 \Div \vec u=0 
\\& \vec u(0,.)=\vec u_0\end{split}\right.\end{equation*}

Under reasonable assumptions, the problem is equivalent to the following integro-differential problem : 
\[ \vec u=e^{t\Delta}\vec u_0-B(\vec u,\vec u)(t,x)\] where
\begin{equation} \label{bilin} B(\vec u,\vec v)=\int_0^t e^{(t-s)\Delta} \mathbb{P}\Div (\vec u\otimes\vec v) \, ds\end{equation} and $\mathbb{P}$ is the Leray projection operator.  (See \cite{Lem02, Lem16}  for details).

Koch and Tataru's theorem is then the following one:

 \begin{theorem} \label{theoKT}
$\ $\\
 There exists $C_0$ (which does not depend on $T$)  such that, if $\vec u$ and $\vec v$ are defined on $(0,T)\times\mathbb{R}^3$,
then
\[ \|B(\vec u,\vec v)\|_{X_T}\leq C_0 \|\vec u\|_{X_T}\|\vec v\|_{X_T}.\]
\end{theorem}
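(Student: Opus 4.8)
\medskip
\noindent\emph{Proof strategy.} The plan is to use the explicit kernel of $e^{(t-s)\Delta}\mathbb{P}\Div$, namely the Oseen tensor $\mathbb{O}(t-s,x-y)$, which is smooth off the origin and satisfies $|\mathbb{O}(\tau,z)|\lesssim(\sqrt\tau+|z|)^{-4}$ and $\int_{\mathbb{R}^3}|\mathbb{O}(\tau,z)|\,dz\lesssim\tau^{-1/2}$ (see \cite{Lem02,Lem16}), so that $B(\vec u,\vec v)(t,x)=\int_0^t\int_{\mathbb{R}^3}\mathbb{O}(t-s,x-y):(\vec u\otimes\vec v)(s,y)\,dy\,ds$. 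Writing $A_T(\vec f)=\sup_{0<t<T}\sqrt t\,\|\vec f(t,\cdot)\|_\infty$ and $\mathcal N_T(\vec f)=\sup_{0<t<T,\,x_0}t^{-3/2}\int_0^t\int_{B(x_0,\sqrt t)}|\vec f|^2\,dy\,ds$, one has $\|\vec f\|_{X_T}=A_T(\vec f)+\mathcal N_T(\vec f)^{1/2}$, and I would bound $A_T(B(\vec u,\vec v))$ and $\mathcal N_T(B(\vec u,\vec v))$ separately; since every estimate below is invariant under the parabolic scaling $(t,x)\mapsto(\lambda^2 t,\lambda x)$, the constant $C_0$ will not depend on $T$.

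For $A_T(B(\vec u,\vec v))$, fix $(t,x)$ and split $\int_0^t=\int_0^{t/2}+\int_{t/2}^t$. On $(t/2,t)$ use $\|(\vec u\otimes\vec v)(s,\cdot)\|_\infty\le A_T(\vec u)A_T(\vec v)/s$ together with $\int|\mathbb{O}(t-s,\cdot)|\lesssim(t-s)^{-1/2}$, obtaining $\lesssim A_T(\vec u)A_T(\vec v)/\sqrt t$. On $(0,t/2)$ one has $\sqrt{t-s}\simeq\sqrt t$, hence $|\mathbb{O}(t-s,x-y)|\lesssim(|x-y|+\sqrt t)^{-4}$; decomposing $y$-space into $\{|x-y|\le\sqrt t\}$ and the dyadic annuli $\{|x-y|\simeq 2^j\sqrt t\}$, bounding $\int_0^{t/2}\int_{\mathrm{piece}}|\vec u\otimes\vec v|$ by Cauchy--Schwarz and the Carleson conditions for $\vec u$ and $\vec v$ at scale $\simeq 2^j\sqrt t$, and summing the resulting geometric series in $j$, one gets $\lesssim\mathcal N_T(\vec u)^{1/2}\mathcal N_T(\vec v)^{1/2}/\sqrt t$. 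Hence $\sqrt t\,\|B(\vec u,\vec v)(t,\cdot)\|_\infty\lesssim\|\vec u\|_{X_T}\|\vec v\|_{X_T}$.

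For $\mathcal N_T(B(\vec u,\vec v))$, fix $t_0\in(0,T)$ and $x_0$, set $R=\sqrt{t_0}$, and split the source as $\vec u\otimes\vec v=\vec w^{\mathrm{far}}+\vec w^{\mathrm{near}}$ with $\vec w^{\mathrm{far}}=(\vec u\otimes\vec v)\,\mathbf{1}_{\{|y-x_0|>2R\}}$. For $\vec w^{\mathrm{far}}$, when $x\in B(x_0,R)$ and $|y-x_0|>2R$ one has $|x-y|\simeq|y-x_0|+R$, so the dyadic-annuli/Carleson computation of the previous paragraph shows that $|B(\vec w^{\mathrm{far}})(t,x)|\lesssim\mathcal N_T(\vec u)^{1/2}\mathcal N_T(\vec v)^{1/2}/R$ uniformly over $0<t<t_0$ and $x\in B(x_0,R)$, whence $t_0^{-3/2}\int_0^{t_0}\int_{B(x_0,R)}|B(\vec w^{\mathrm{far}})|^2\lesssim\mathcal N_T(\vec u)\mathcal N_T(\vec v)$. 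For $\vec w^{\mathrm{near}}$, I would argue by duality: it suffices to bound $|\langle B(\vec w^{\mathrm{near}}),h\rangle_{L^2_{t,x}}|$ for all $h$ supported in $(0,t_0)\times B(x_0,R)$ with $\|h\|_{L^2}\le1$ by $\lesssim R^{3/2}\|\vec u\|_{X_T}\|\vec v\|_{X_T}$. Transposing the operator gives $\langle B(\vec w^{\mathrm{near}}),h\rangle=-\int_0^{t_0}\int(\vec u\otimes\vec v)\,\mathbf{1}_{\{|y-x_0|\le 2R\}}:\nabla H\,dy\,ds$, where $H(s,\cdot)=\int_s^{t_0}e^{(\sigma-s)\Delta}\mathbb{P}h(\sigma,\cdot)\,d\sigma$ solves the backward heat equation $-\partial_s H-\Delta H=\mathbb{P}h$ with zero data at $s=t_0$; so the matter is reduced to the single estimate $\int_0^{t_0}\int_{B(x_0,2R)}|\vec u|\,|\vec v|\,|\nabla H|\,dy\,ds\lesssim R^{3/2}\|\vec u\|_{X_T}\|\vec v\|_{X_T}$.

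I expect this last inequality to be the main obstacle, and I would attack it by a parabolic Whitney decomposition of $(0,t_0)\times B(x_0,2R)$ anchored at the hyperplane $\{s=0\}$: on the boxes at depth $\simeq 2^{-k}t_0$ (side $\simeq 2^{-k/2}R$, of which there are $\simeq 2^{3k/2}$) one combines the Carleson bounds for $\vec u$ and $\vec v$ with the local maximal-regularity/energy bounds for $H$, namely $\|\nabla H\|_{L^\infty_s L^2_x}+\|\nabla^2 H\|_{L^2_{s,x}}+\|\partial_s H\|_{L^2_{s,x}}\lesssim\|h\|_{L^2}\le1$ together with the fact that $\nabla H$ is supported in $\{s<t_0\}$ and is concentrated, up to rapidly decaying tails, in $\{|y-x_0|\lesssim R\}$; summing the contributions over the levels $k$ produces the factor $R^{3/2}$. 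The delicate point is that $\vec u\otimes\vec v$ is only barely locally square-integrable for $\vec u,\vec v\in X_T$, so one must not replace it by its $L^2_{t,x}$ norm; the regularizing one-derivative gain of $e^{(t-s)\Delta}\mathbb{P}\Div$ has to be used exactly, which is what makes the geometric series over the Whitney levels converge --- this is the marginal, scaling-critical heart of the estimate, reflecting the borderline relation $\tfrac2p+\tfrac3q=1$. Granting it, combining the near and far contributions yields $\mathcal N_T(B(\vec u,\vec v))\lesssim\|\vec u\|_{X_T}^2\|\vec v\|_{X_T}^2$, which together with the pointwise bound of the second paragraph gives $\|B(\vec u,\vec v)\|_{X_T}\le C_0\|\vec u\|_{X_T}\|\vec v\|_{X_T}$ with $C_0$ independent of $T$.
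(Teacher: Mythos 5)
You should first note that the paper does not prove Theorem \ref{theoKT} at all: it is quoted from Koch and Tataru \cite{KoT01} (see also \cite{Lem02, Lem16}), so the only meaningful comparison is with the classical proof. Your treatment of the $\sup_{0<t<T}\sqrt t\,\|\cdot\|_\infty$ component (splitting $\int_0^{t/2}+\int_{t/2}^t$, using the sup bounds near $s=t$ and the Carleson information with dyadic annuli for $s<t/2$) and of the far-field contribution to the Carleson component are correct and essentially the standard argument, and the parabolic-scaling remark does give a constant independent of $T$.

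The near-field Carleson term, however, is exactly where the whole content of the theorem sits, and your proposal does not prove it: the duality-plus-Whitney scheme cannot close with the bounds on $H$ that you list. Indeed, on the level-$k$ boxes ($s\in I_k=(2^{-k-1}t_0,2^{-k}t_0)$, side $2^{-k/2}R$), using $|\vec v(s)|\le A_T(\vec v)s^{-1/2}$, the Carleson bound for $\vec u$ on each box and Cauchy--Schwarz across the boxes of that level, the level-$k$ contribution is at best $\lesssim A_T(\vec v)\,\mathcal N_T(\vec u)^{1/2}\,(2^kR)^{1/2}\,\|\nabla H\|_{L^2(I_k\times\mathbb{R}^3)}$; since the only control you have is $\|\nabla H\|_{L^\infty_sL^2_x}\lesssim 1$, i.e.\ $\|\nabla H\|_{L^2(I_k\times\mathbb{R}^3)}\lesssim (2^{-k}t_0)^{1/2}$, every level contributes the same amount $\simeq A_T(\vec v)\mathcal N_T(\vec u)^{1/2}R^{3/2}$ and the sum over $k$ diverges. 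There is no hidden decay to restore a geometric series: for $h$ spread uniformly over $(0,t_0)\times B(x_0,R)$ one has $\|\nabla H(s)\|_2\simeq 1$ for all $s\le t_0/2$, and the extra bounds $\|\nabla^2H\|_{L^2},\|\partial_sH\|_{L^2}\lesssim1$ do not improve the level-by-level count (Gagliardo--Nirenberg interpolation still leaves a divergent sum). Nor can one retreat to an $L^2$ bound on the source: taking $|\vec u|=|\vec v|=\sigma^{-1/2}$ on $(\sigma,2\sigma)\times B(x_0,2R)$ and letting $\sigma\to0$ shows that $\|\,|\vec u|\,|\vec v|\,\|_{L^2((0,t_0)\times B(x_0,2R))}$ is not controlled by $\|\vec u\|_{X_T}\|\vec v\|_{X_T}$ --- so you are right that the product must stay coupled to the smoothing of $e^{(t-s)\Delta}\mathbb{P}\Div$, but identifying the obstacle is not overcoming it. As it stands, ``granting'' your last inequality amounts to granting the theorem itself; to complete the proof you need the actual Koch--Tataru argument for the local term (or an equivalent extra idea), which uses the parabolic gain inside the Duhamel integral in a finer way than duality against global space--time bounds for $H$ --- see \cite{KoT01} or the detailed expositions in \cite{Lem02, Lem16}.
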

 
 \begin{corollary}\label{kocht} $\ $
 \\
  Let $\vec u_0\in {\rm bmo}^{-1}$ with $\text{ div }\vec u_0=0$. If $\|e^{t\Delta}\vec u_0\|_{X_T}<\frac 1{4C_0}$, then the integral Navier--Stokes equations have a solution on $(0,T)$ such that $\|\vec u\|_{X_T}\leq 2\| e^{t\Delta}\vec u_0\|_{X_T}$.
 
    This is the  {unique} solution such that $\|\vec u\|_{X_T}\leq \frac 1 {2C_0}$.
\end{corollary}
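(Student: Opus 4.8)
\section*{Proof proposal for Corollary \ref{kocht}}

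The plan is to obtain the corollary from Theorem \ref{theoKT} by the classical Picard contraction argument for quadratic equations in a Banach space. First I would record two routine facts. On the one hand, $X_T$ equipped with $\|\cdot\|_{X_T}$ is a Banach space: the first term is a weighted $L^\infty_t L^\infty_x$ norm and the second a Carleson-type quantity, and completeness for both is standard (see \cite{KoT01, Lem02}). On the other hand, by the proposition recalled above, the hypothesis $\vec u_0\in{\rm bmo}^{-1}$ with ${\rm div}\,\vec u_0=0$ ensures that $y:=e^{t\Delta}\vec u_0$ belongs to $X_T$ with $\|y\|_{X_T}=\|e^{t\Delta}\vec u_0\|_{X_T}<\frac1{4C_0}$, and that $y$ and every iterate stay divergence free. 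The integral Navier--Stokes problem on $(0,T)$ is then literally the fixed-point equation $\vec u=\Phi(\vec u)$ for $\Phi(\vec u):=y-B(\vec u,\vec u)$, and by Theorem \ref{theoKT} the map $\Phi$ sends $X_T$ into $X_T$.

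Next I would run the two contraction estimates on the closed ball $\overline{B}_R:=\{\vec u\in X_T:\ \|\vec u\|_{X_T}\le R\}$ with $R:=2\|y\|_{X_T}$, so that $R<\frac1{2C_0}$. For the self-map property, Theorem \ref{theoKT} gives, for $\vec u\in\overline{B}_R$, the bound $\|\Phi(\vec u)\|_{X_T}\le\|y\|_{X_T}+C_0R^2=\|y\|_{X_T}\,(1+4C_0\|y\|_{X_T})<2\|y\|_{X_T}=R$, since $4C_0\|y\|_{X_T}<1$. For the Lipschitz property, the bilinearity identity $\Phi(\vec u)-\Phi(\vec v)=-B(\vec u-\vec v,\vec u)-B(\vec v,\vec u-\vec v)$ together with Theorem \ref{theoKT} yields $\|\Phi(\vec u)-\Phi(\vec v)\|_{X_T}\le C_0\big(\|\vec u\|_{X_T}+\|\vec v\|_{X_T}\big)\|\vec u-\vec v\|_{X_T}\le 2C_0R\,\|\vec u-\vec v\|_{X_T}$ with $2C_0R=4C_0\|y\|_{X_T}<1$. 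The Banach fixed-point theorem then produces a unique $\vec u\in\overline{B}_R$ with $\vec u=\Phi(\vec u)$; this is the announced solution, and it satisfies $\|\vec u\|_{X_T}\le R=2\|e^{t\Delta}\vec u_0\|_{X_T}$.

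Finally, for the uniqueness claim in the strictly larger ball $\|\vec u\|_{X_T}\le\frac1{2C_0}$, I would not re-run the contraction (it only barely fails there) but instead trap any candidate solution inside $\overline{B}_R$ by a scalar argument. If $\vec w=\Phi(\vec w)$, then by Theorem \ref{theoKT} the number $a_w:=\|\vec w\|_{X_T}$ satisfies $a_w\le\|y\|_{X_T}+C_0\,a_w^2$, i.e.\ $C_0a_w^2-a_w+\|y\|_{X_T}\ge0$. Since $\|y\|_{X_T}<\frac1{4C_0}$ this quadratic has two positive roots, and the larger one, $\frac{1+\sqrt{1-4C_0\|y\|_{X_T}}}{2C_0}$, exceeds $\frac1{2C_0}$; hence a solution with $a_w\le\frac1{2C_0}$ must in fact obey $a_w\le\frac{1-\sqrt{1-4C_0\|y\|_{X_T}}}{2C_0}\le 2\|y\|_{X_T}=R$, so $\vec w\in\overline{B}_R$ and $\vec w$ equals the solution already constructed.

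Granted Theorem \ref{theoKT}, there is no real obstacle: the whole statement reduces to the abstract lemma that $\vec u=y-B(\vec u,\vec u)$ with $\|B\|\le C_0$ is uniquely solvable when $\|y\|<\frac1{4C_0}$. The one mildly delicate point, which the last paragraph is meant to handle cleanly, is that uniqueness is asserted on a ball ($\|\vec u\|_{X_T}\le\frac1{2C_0}$) genuinely larger than the one on which $\Phi$ contracts, so a direct fixed-point uniqueness does not apply and the quadratic-inequality trapping is needed instead.
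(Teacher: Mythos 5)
Your proposal is correct and follows essentially the route the paper takes (and attributes to \cite{KoT01, Lem02}): a Picard iteration/Banach fixed-point argument for the quadratic equation $\vec u=e^{t\Delta}\vec u_0-B(\vec u,\vec u)$ in the ball of radius $2\|e^{t\Delta}\vec u_0\|_{X_T}$, based solely on the bilinear estimate of Theorem \ref{theoKT}. Your treatment of uniqueness in the larger ball $\|\vec u\|_{X_T}\leq \frac 1{2C_0}$ — trapping any such solution in the small ball via the scalar inequality $a\leq \|e^{t\Delta}\vec u_0\|_{X_T}+C_0a^2$ and the location of its roots — is a correct, harmless variant of the usual direct argument, which compares a candidate solution to the constructed one and uses $\|\vec u-\vec v\|_{X_T}\leq C_0\left(\|\vec u\|_{X_T}+\|\vec v\|_{X_T}\right)\|\vec u-\vec v\|_{X_T}$ with contraction factor $\frac12+2C_0\|e^{t\Delta}\vec u_0\|_{X_T}<1$.
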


The solution $\vec u$ can be computed through Picard iteration as the limit of $\vec U_n$, where $\vec U_0=e^{t\Delta}\vec u_0$ and $\vec U_{n+1}=e^{t\Delta}\vec u_0-B(\vec U_n,\vec U_n)$. In particular, we have, by induction,
  $$\|\vec U_{n+1}-\vec U_n\|_{X_T} \leq (4C_0 \|e^{t\Delta}\vec u_0\|_{X_T})^{n+1}  \|e^{t\Delta}\vec u_0\|_{X_T}.$$
  
  Thus, Corollary \ref{kocht} grants local existence of a solution for the Navier--Stokes equations when the initial value belongs to the space ${\rm bmo}^{-1}_0$:
  \begin{definition}\label{bmozero}$\ $\\  $\vec u_0\in {\rm bmo}^{-1}_0$ if $\vec u\in {\rm bmo}^{-1}$ and $\lim_{T\rightarrow 0} \|e^{t\Delta}\vec u_0\|_{X_T}=0$.
\end{definition}

We may now recall Barker's theorem \cite{Bar18}:
    \begin{theorem} \label{theobar} $\ $\\ Let $\vec u_0$ be a divergence-free vector field with $\vec u_0\in L^2$.  Assume moreover $$\vec u_0\in  {\rm bmo}^{-1}_0\cap    B^{-s}_{q,\infty} \text{ with } 3<q<+\infty \text{ and }s<1 -\frac 2 q$$ and let $\vec u$ be the mild solution  of the Navier--Stokes equations with initial value $\vec u_0$ such that   $\|\vec u\|_{X_T}\leq \frac 1 {2C_0} $. If  $\vec v$ is a weak Leray solution of the Navier--Stokes equations with the same initial value $\vec u_0$,     then $\vec u=\vec v$ on $(0,T)$. \end{theorem}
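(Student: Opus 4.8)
The plan is to localize in time, where the smallness built into ${\rm bmo}^{-1}_0$ is available, to split the mild solution $\vec u$ into a genuinely subcritical (Prodi--Serrin) piece plus a small ``critical'' remainder, and then to run a relative--energy estimate in which only the remainder is critical; the logarithmic loss produced by that remainder will be absorbed using its smallness together with a quantitative rate of vanishing of $\vec v-\vec u$ at $t=0$, a rate obtained from the sub-critical Besov regularity. Since $\vec u_0\in{\rm bmo}^{-1}_0$, the quantity $\|e^{t\Delta}\vec u_0\|_{X_{T_1}}$ tends to $0$ as $T_1\to0$, so one may fix $T_1\le T$ so small that, by Corollary~\ref{kocht}, $\|\vec u\|_{X_{T_1}}\le 2\|e^{t\Delta}\vec u_0\|_{X_{T_1}}=:\delta$ is as small as needed. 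It suffices to prove $\vec u=\vec v$ on $(0,T_1)$: for $t\ge T_1$ the mild solution is smooth and bounded, hence lies in the Prodi--Serrin class $L^\infty_tL^\infty_x$, and a standard continuation argument then propagates the identity up to time $T$. I shall also use two known facts: a mild solution in $X_T$ with $\vec u_0\in L^2$ belongs to $L^\infty_tL^2_x\cap L^2_t\dot H^1_x$ and satisfies the energy \emph{equality} on $(0,T)$; and any weak Leray solution satisfies the integral identity $\vec v=e^{t\Delta}\vec u_0-B(\vec v,\vec v)$.

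\emph{Splitting the strong solution.} Because $\vec u_0\in B^{-s}_{q,\infty}$ one has $\|e^{\tau\Delta}\vec u_0\|_q\le C\tau^{-s/2}$ for $\tau>0$; set $\vec a_0=e^{\tau\Delta}\vec u_0\in L^2\cap L^q$ and $\vec b_0=\vec u_0-\vec a_0$, with $\|\vec b_0\|_{{\rm bmo}^{-1}}\to0$ as $\tau\to0$. Let $\vec u^\flat$ be the strong solution with data $\vec a_0$: since $q>3$ it lies, on a short interval $(0,T_1)$ (now allowed to depend on $\tau$), in the \emph{subcritical} class $\vec u^\flat\in L^\infty_tL^q_x$ with $\frac3q<1$, and $\|\vec u^\flat\|_{X_{T_1}}$ is small. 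Setting $\vec u^\sharp=\vec u-\vec u^\flat$, a perturbed Koch--Tataru contraction around $\vec u^\flat$ (Theorem~\ref{theoKT}) gives $\|\vec u^\sharp\|_{X_{T_1}}\le C\|\vec b_0\|_{{\rm bmo}^{-1}}=:\delta'$ with $\delta'$ arbitrarily small (choose $\tau$ first, then $T_1$), and uniqueness in $X_{T_1}$ forces $\vec u=\vec u^\flat+\vec u^\sharp$. So on $(0,T_1)$ we have $\vec u=\vec u^\flat+\vec u^\sharp$, with $\vec u^\flat$ subcritical and $\vec u^\sharp$ small in $X_{T_1}$.

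\emph{Relative--energy estimate and closing argument.} Put $\vec w=\vec v-\vec u$, so $\vec w(t)\to0$ in $L^2$. Subtracting \eqref{eqler} for $\vec v$ from the energy equality for $\vec u$ and using $\Div\vec w=0$ with the usual cancellations gives, for a.e.\ $t\in(0,T_1)$,
$$\|\vec w(t)\|_2^2+2\int_0^t\|\vec\nabla\otimes\vec w\|_2^2\,ds\le 2\int_0^t\langle(\vec w\cdot\vec\nabla)\vec w,\vec u^\flat+\vec u^\sharp\rangle\,ds.$$
The $\vec u^\flat$ term is the classical Prodi--Serrin one: Hölder together with the Gagliardo--Nirenberg inequality bounds it by $\frac14\|\vec\nabla\vec w\|_2^2+C\|\vec u^\flat\|_q^{2q/(q-3)}\|\vec w\|_2^2$, a coefficient in $L^1_t$ since $\vec u^\flat\in L^\infty_tL^q_x$. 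The $\vec u^\sharp$ term is estimated only through $\|\vec u^\sharp(t)\|_\infty\le\delta' t^{-1/2}$, producing a coefficient $C\delta'^2/t$. Writing $g=\|\vec w\|_2^2$ one obtains $g(t)\le\int_0^t(\phi(s)+C\delta'^2s^{-1})g(s)\,ds$ with $\phi\in L^1(0,T_1)$ and $g(0^+)=0$. This inequality alone does not force $g\equiv0$, since the kernel is not integrable at $0$; the extra ingredient I would use is a power rate $g(t)\le Ct^{\kappa}$ for some $\kappa>0$. Granting it, integrating from $t_0>0$ (where the kernel is integrable) gives $g(t)\le g(t_0)\,e^{\|\phi\|_{1}}(t/t_0)^{C\delta'^2}\to0$ as $t_0\to0$, once $\delta'$ has been chosen with $C\delta'^2<\kappa$; hence $\vec w\equiv0$ on $(0,T_1)$, and the reduction step finishes the proof.

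\emph{The main obstacle.} The hard part will be producing that rate. Running the identity $\vec w=-B(\vec w,\vec v)-B(\vec u,\vec w)$ with $\vec v=\vec w+\vec u^\flat+\vec u^\sharp$, the contributions $B(\,\cdot\,,\vec u^\flat)$ and $B(\vec u^\flat,\,\cdot\,)$ carry a positive power $t^{\frac12-\frac{3}{2q}}$ (this uses $q>3$) and the contributions $B(\,\cdot\,,\vec u^\sharp)$, $B(\vec u^\sharp,\,\cdot\,)$ carry a small constant $\propto\delta'$; the only term that resists is $B(\vec w,\vec w)$, which cannot be controlled with the bare energy norm of $\vec w$. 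To tame it one must carry along a sub-critical weighted bound $\sup_{0<t<T_1}t^{s/2}\|\vec w(t)\|_q<\infty$ --- i.e.\ propagate the Besov regularity of $\vec u_0$ to the Leray solution $\vec v$ --- and then, interpolating $\|\vec w(t)\|_q\lesssim t^{-s/2}$ with $\vec w\in L^\infty_tL^2_x$, estimate $\|B(\vec w,\vec w)(t)\|_2\lesssim t^{1-\alpha_0-s\theta}$; an elementary computation of $\alpha_0$ and $\theta$ shows $1-\alpha_0-s\theta>0$ precisely when $s<1-\frac2q$, which is exactly the hypothesis. Thus the two delicate points are the propagation of the weighted $L^q$--control to the Leray solution --- which has to be carried out jointly with, not after, the energy estimate, since the nonlinearity on $\vec w$ is not controllable in the energy class alone --- and the verification that this weighted control is compatible, at the threshold $s<1-\frac2q$, with the $L^2$ energy estimate. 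It is here that the two hypotheses are both genuinely used: ${\rm bmo}^{-1}_0$ supplies the smallness that makes $\vec u^\sharp$ (and the critical coefficient $\delta'^2/t$) manageable, and $B^{-s}_{q,\infty}$ with $s<1-\frac2q$ supplies the sub-critical gain that tames $B(\vec w,\vec w)$.
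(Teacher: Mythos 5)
Your endgame is the right one, and in fact coincides with the paper's closing step: once one knows an algebraic rate $\|\vec v(t)-\vec u(t)\|_2\leq Ct^{\kappa}$ near $t=0$, the relative energy inequality with the critical coefficient $C\delta'^2/t$ (smallness coming from ${\rm bmo}^{-1}_0$) can be integrated from $t_0>0$ and the rate absorbs the non-integrable kernel; this is exactly the argument of Section 6 of the paper (with $A(t)=t^{-2\eta}\|\vec u-\vec v\|_2^2$). The genuine gap is that you never produce the rate, and the rate is the actual content of the theorem. The route you sketch for it --- propagating a subcritical bound $\sup_{0<t<T_1}t^{s/2}\|\vec w(t)\|_q<+\infty$ to $\vec w=\vec v-\vec u$, ``jointly'' with the energy estimate --- does not work as stated: a weak Leray solution is only known to satisfy the energy inequality, and in the Duhamel identity $\vec w=-B(\vec w,\vec v)-B(\vec u,\vec w)$ the quadratic term $B(\vec w,\vec w)$ must be estimated in $t^{s/2}L^q$ by norms of $\vec w$ itself which are neither controlled by the energy class nor small (the only smallness of $\vec w$ at $t\to0$ is in $L^2$), so the bootstrap has no small parameter in front of the quadratic term and is circular. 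In effect you would need to know beforehand that $\vec v$ enjoys the same Prodi--Serrin-type regularity as $\vec u$, which is precisely what a weak-strong uniqueness proof may not assume; your splitting $\vec u=\vec u^\flat+\vec u^\sharp$ of the \emph{strong} solution does not help, because the obstruction lives in $\vec w$, not in $\vec u$.

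The mechanism that actually yields the rate (Barker's, and the paper's in its weighted generalization, Theorem \ref{stb} plus Corollary \ref{embed}) splits the \emph{initial datum}, not the strong solution: $L^2\cap B^{-s}_{q,\infty}$ with $s<1-\frac2q$ embeds into an interpolation space $[L^2,B^{-\delta}_{\infty,\infty}]_{\theta,\infty}$ with $\delta<1$ (this is where the hypothesis $s<1-\frac2q$ is used), so for every $\epsilon$ one writes $\vec u_0=\vec v_{0,\epsilon}+\vec w_{0,\epsilon}$ with $\|\vec w_{0,\epsilon}\|_2\leq C\epsilon^{\theta}$ and $\|\vec v_{0,\epsilon}\|_{B^{-\delta}_{\infty,\infty}}\leq C\epsilon^{\theta-1}$, solves the Navier--Stokes equations from $\vec v_{0,\epsilon}$ on a time $T_1(\epsilon)\sim\epsilon^{\frac{2(1-\theta)}{1-\delta}}$ with $\|\vec v_\epsilon(t)\|_\infty\leq C\epsilon^{\theta-1}t^{-\delta/2}$ (square-integrable in time since $\delta<1$), and compares \emph{both} $\vec u$ and $\vec v$ with the auxiliary solution $\vec v_\epsilon$ by a Gronwall argument that uses only the energy inequality for $\vec v$ and the smoothness of $\vec v_\epsilon$. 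This gives $\|\vec u(t)-\vec v(t)\|_2\leq C\epsilon^{\theta}$ for $t\leq T_1(\epsilon)$, and optimizing $\epsilon\sim t^{\frac{1-\delta}{2(1-\theta)}}$ produces the power rate; your absorption step then finishes. Without this (or an equivalent) device, the central step of your proposal is missing, so as it stands the proof is not complete.
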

 Again, we remark that, if $0<s<1-\frac 2 q$ and if $\vec u_0\in {\rm bmo}^{-1}_0$, if $\vec u$ is the mild solution with $\|\vec u\|_{X_T}\leq \frac 1 {2C_0} $, then $\vec u_0\in B^{-s}_{q,\infty}$ is equivalent to $$\sup_{0<t<T} t^{s/2} \|\vec u(t,.)\|_q<+\infty.$$
 In the following theorems, we shall state the assumptions in terms of the mild solution $\vec u$ instead of the initial value $\vec  u_0$. In  Theorem \ref{solbesov}, we shall give the equivalence between the assumption on the solution $\vec u$ and the assumption on the initial value $\vec u_0$.
 
 We aim to generalize Barker's result to a larger class of mild solutions.  Barker's result is based on an interpolation lemma which states that, if $\vec u_0\in  {\rm bmo}^{-1}_0\cap   L^2\cap   B^{-s}_{q,\infty} $ { with } $3<q<+\infty $ { and }$ -s>-1+\frac 2 q$, then $\vec u_0\in [L^2,B^{-\delta}_{\infty,\infty}]_{\theta,\infty}$ for some $\theta\in (0,1)$ and some $\delta\in (0,1)$. (Those conditions are in a way equivalent, as we shall see in Corollary \ref{embed}.) Then the comparison between the Leray solution  $\vec v$ and the mild solution $\vec u$ is performed through an estimation of both $\|\vec u-\vec w_\epsilon\|_2$ and $\|\vec v-\vec w_\epsilon\|_2$, where $\vec w_\epsilon$ is the solution of the Navier--Stokes problem with initial value $\vec w_{0,\epsilon}$ such that $\|\vec w_{0,\epsilon}-\vec u_0\|_2\leq C_1 \epsilon^\theta$ and $\|\vec w_\epsilon\|_{B^{-\delta}_{\infty,\infty}}< C_1 \epsilon^{\theta-1}$  (with $C_1$ depending on $\vec u_0$ but not on $\epsilon$).
 
 Our idea is to replace the space $L^2$ by the larger space $L^2_w=L^2(w\, dx)$ with $w(x)=\frac 1{(1+\vert x\vert)^2}$, and use the interpolation space $ [L^2_w,B^{-\delta}_{\infty,\infty}]_{\theta,\infty}$ for some $\theta\in (0,1)$ and some $\delta\in (0,1)$. As we shall no longer deal with the $L^2$ norm, the Leray inequality on $\|\vec v\|_2$ will not be sufficient. Instead, we shall consider a stricter class of weak solutions, namely the suitable weak Leray solutions \cite{CKN}:
 
   \begin{definition}\label{suits}$\ $\\ A Leray solution is suitable on $(0,T)$  if it fulfills the local energy inequality:
there exists    a non-negative locally finite measure $\mu$ on  $(0,T)\times\mathbb{R}^3$ such that we have
\begin{equation}\label{eneq} \partial_t(\vert\vec u\vert^2)+ 2 \vert\vec\nabla\otimes\vec u\vert^2=\Delta(\vert\vec u\vert^2)- \Div((2p+\vert\vec u\vert^2)\vec u)-\mu.\end{equation}
\end{definition}

   We may now state our main results. The first one (stated in \cite{Lem19}) weakens the integrability requirement on the solution $\vec u$ from the Lebesgue space $L^q$ to the Morrey space $M^{p,q}$. Recall that the Morrey space $M^{p,q}$, $1<p\leq q<+\infty$, is defined by
$$ \|f\|_{M^{p,q}}=\sup_{x_0\in\mathbb{R}^3}\sup_{0<r\leq 1} r^{\frac 3 q-\frac 3 p} (\int_{B(x_0,r)} \vert f(x)\vert^p\, dx)^{\frac 1 p}<+\infty.$$ For $p=1$, one replaces the requirement $f\in L^p_{\rm loc}$ by the assumption that $f$ is a locally finite Borel measure $\mu$ with $$ \|f\|_{M^{1,q}}=\sup_{x_0\in\mathbb{R}^3}\sup_{0<r\leq 1} r^{\frac 3 q-  3 } \int_{B(x_0,r)}  d\vert\mu\vert(x) <+\infty.$$ For $1<p\leq +\infty$, we have the continuous embeddings $$ L^q\subset M^{q,q}\subset M^{p,q}\subset M^{1,q}.$$

The idea of considering Morrey spaces instead of Lebesgue spaces is quite natural. Indeed, in the direct proof of the Prodi--Serrin criterion, a key estimate is the inequality
$$ \int \vert u v \vert \vert\vec\nabla w\vert \, dx \leq C \|u\|_q \|v\|_2^{1-\theta} \|\vec \nabla v\|_2^{\theta} \|\vec \nabla w\|_2$$ for $0\leq \theta\leq 1$ and $ \frac 1 q =\frac {\theta}3 $. This inequality still holds when the $L^q$ norm is replaced by the norm in the homogeneous Morrey space $\dot M^{2,q}$ with $0<\theta<1$ and $\frac 1 q=\frac\theta 3$ \cite{Lem07}.

    \begin{theorem} \label{theomorr} $\ $\\ Let $\vec u_0$ be a divergence-free vector field with $\vec u_0\in L^2\cap   {\rm bmo}^{-1}_0$.  Assume moreover that the mild solution $\vec u$  of the Navier--Stokes equations with initial value $\vec u_0$ such that   $\|\vec u\|_{X_T}\leq \frac 1 {2C_0} $ is such that
    $$ \sup_{0<t<T}  t^{s/2} \|\vec u(t,.)\|_{\dot M^{p,q}}<+\infty  \text{ with } 2<p\leq q<+\infty \text{ and }0\leq s<1-\frac 2 p.$$  If  $\vec v$ is a suitable  weak Leray solution of the Navier--Stokes equations with the same initial value $\vec u_0$,     then $\vec u=\vec v$ on $(0,T)$. \end{theorem}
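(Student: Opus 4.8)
\emph{Sketch of a proof.} The plan is to reproduce the architecture of Barker's argument for Theorem~\ref{theobar}, but to run the whole comparison in the weighted space $L^2_w$, $w(x)=(1+|x|)^{-2}$, instead of $L^2$: this is what the presence of a Morrey, rather than a Lebesgue, bound forces, and it is the reason the local energy inequality \eqref{eneq} (hence suitability) has to be assumed. First I would pass from the assumption on $\vec u$ to an assumption on $\vec u_0$: since $\vec u_0\in{\rm bmo}^{-1}_0$ and $\|\vec u\|_{X_T}\le\frac1{2C_0}$, the bound $\sup_{0<t<T}t^{s/2}\|\vec u(t,.)\|_{\dot M^{p,q}}<+\infty$ is equivalent to $\vec u_0$ lying in the heat-characterized Besov--Morrey space (the Morrey counterpart of Theorem~\ref{equivbar}, recorded as Theorem~\ref{solbesov}). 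The key structural input is then the weighted analogue of Barker's interpolation lemma, i.e. Corollary~\ref{embed}: together with $\vec u_0\in L^2$, this forces $\vec u_0\in[L^2_w,B^{-\delta}_{\infty,\infty}]_{\theta,\infty}$ for some $\theta\in(0,1)$ and $\delta\in(0,1)$ --- and it is precisely here that $L^2_w$ has to replace $L^2$, since $\dot M^{p,q}$-functions are only locally square integrable. The $K$-functional of this couple supplies, for every $\epsilon\in(0,\epsilon_0)$, a Littlewood--Paley splitting $\vec u_0=\vec b_\epsilon+\vec a_\epsilon$ with $\vec b_\epsilon=S_{j(\epsilon)}\vec u_0$ divergence-free, $\|\vec a_\epsilon\|_{L^2_w}\le C_1\epsilon^{\theta}$ and $\|\vec b_\epsilon\|_{B^{-\delta}_{\infty,\infty}}\le C_1\epsilon^{\theta-1}$; crucially, since $\vec b_\epsilon$ is a frequency truncation of $\vec u_0$, one also gets $\|\vec b_\epsilon\|_{{\rm bmo}^{-1}}+\sup_{0<t<T}t^{s/2}\|e^{t\Delta}\vec b_\epsilon\|_{\dot M^{p,q}}\le C_1$ \emph{uniformly in }$\epsilon$.

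Next I would introduce the regularized solutions $\vec w_\epsilon$, the mild solutions with data $\vec b_\epsilon$; by Corollary~\ref{kocht} they exist on $(0,T)$ with $\|\vec w_\epsilon\|_{X_T}\le\frac1{2C_0}$ (after shrinking $T$ once, the original $T$ being recovered at the end by a standard open/closed bootstrap: at any $t_0>0$, $\vec u(t_0)=\vec v(t_0)\in L^2\cap L^\infty$, so a short-time Prodi--Serrin uniqueness applies past $t_0$). A routine persistence argument --- a fixed point in the path space adapted to the Besov--Morrey norm, which is subcritical because $s<1-\frac2p$, and in the Kato space $\{\sup_t t^{\delta/2}\|\cdot\|_\infty<\infty\}$, subcritical because $\delta<1$ --- then yields, \emph{uniformly in }$\epsilon$, that $\sup_{0<t<T}t^{s/2}\|\vec w_\epsilon(t,.)\|_{\dot M^{p,q}}\le C_2$ and $\sup_{0<t<T}t^{\delta/2}\|\vec w_\epsilon(t,.)\|_\infty\le C_2\epsilon^{\theta-1}$, with $\vec w_\epsilon$ smooth for $t>0$, in the energy class, satisfying the local energy \emph{equality}. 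I would then prove the first comparison: $\vec u$ and $\vec w_\epsilon$ are both mild solutions small in $X_T$ with $\vec u_0-\vec b_\epsilon=\vec a_\epsilon$ small in $L^2_w$, so $\vec u-\vec w_\epsilon$ solves a linear integral equation with small coefficients; estimating the bilinear operator \eqref{bilin} in $C_tL^2_w$ (the heat semigroup and $e^{s\Delta}\mathbb P\,\Div$ being essentially bounded on $L^2_w$ because $w$ varies slowly) gives $\sup_{0<t<T}\|\vec u(t,.)-\vec w_\epsilon(t,.)\|_{L^2_w}\le C_3\epsilon^{\theta}$.

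The heart of the proof is the second comparison, between $\vec v$ and $\vec w_\epsilon$. Setting $\vec d=\vec v-\vec w_\epsilon$, I would combine \eqref{eneq} for $\vec v$, the local energy equality for $\vec w_\epsilon$, and the weak form of $\frac{d}{dt}\int\vec v\cdot\vec w_\epsilon\,w\,dx$ (justified by truncating $w$ to compact support and passing to the limit using the Leray/suitability bounds on $\vec v$ together with $w\in L^1$) to obtain
\[
\|\vec d(t)\|_{L^2_w}^2+2\int_0^t\!\!\int|\vec\nabla\otimes\vec d|^2\,w\,dx\,ds\ \le\ \|\vec a_\epsilon\|_{L^2_w}^2+\int_0^t\bigl(R_w(s)+N(s)\bigr)\,ds,
\]
where $R_w$ gathers the weight commutators ($\int|\vec d|^2\Delta w$, together with the pressure and energy-flux terms paired with $\vec\nabla w$; the part $-\int w\,d\mu\le0$ is discarded) and $N$ is the nonlinear contribution, which after an integration by parts reduces to $-2\int(\vec d\cdot\vec\nabla)\vec d\cdot\vec w_\epsilon\,w$ modulo further weight terms. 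Since $|\vec\nabla w|\le2w$ and $|\Delta w|\le Cw$, the terms in $R_w$ are bounded by $C\psi(s)\|\vec d(s)\|_{L^2_w}^2$ modulo a part absorbed into $\int|\vec\nabla\otimes\vec d|^2w$, with $\psi\in L^1(0,T)$ depending only on the Leray/suitability bounds on $\vec v$ (through $\vec v\in L^{10/3}_{t,x,{\rm loc}}$ and $p_v\in L^{5/3}_{t,x,{\rm loc}}$), the decay of $w$ making the integrals in $x$ finite. For $N$ I would invoke the \emph{Morrey} form of the key inequality recalled above, measuring $\vec w_\epsilon(s)$ in $\dot M^{2,\tilde q}\supset\dot M^{p,q}$ for a $\tilde q\ge q$ so large that $3/\tilde q<1-s$ (possible since $s<1$); Young's inequality then leaves $\tfrac12\int|\vec\nabla\otimes\vec d|^2w+C\,\|\vec w_\epsilon(s)\|_{\dot M^{2,\tilde q}}^{2/(1-3/\tilde q)}\|\vec d(s)\|_{L^2_w}^2$, and by the previous step the factor is $\le C_2'\,s^{-s/(1-3/\tilde q)}$, which is \emph{independent of $\epsilon$ and in $L^1(0,T)$}. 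With $\Phi=\psi+C_2'\,s^{-s/(1-3/\tilde q)}\in L^1(0,T)$, Gr\"onwall's lemma gives $\|\vec d(t)\|_{L^2_w}^2\le\|\vec a_\epsilon\|_{L^2_w}^2\,e^{\int_0^T\Phi}\le C_4\epsilon^{2\theta}$ with $C_4$ independent of $\epsilon$; combined with the first comparison, $\|\vec u(t,.)-\vec v(t,.)\|_{L^2_w}\le(C_3+\sqrt{C_4})\epsilon^{\theta}$ on $(0,T)$, and letting $\epsilon\to0$ yields $\vec u=\vec v$, whence the conclusion after the bootstrap.

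\textbf{The main obstacle} is this last weighted energy estimate. Multiplying the local energy (in)equality by the $x$-dependent weight $w$ generates commutator terms and makes the pressure and energy flux enter the balance, so the suitable weak solution hypothesis is genuinely used; one must then check that all of these, \emph{and} the nonlinear term handled through the Morrey (not Lebesgue) key inequality, are dominated by a Gr\"onwall weight that is simultaneously time-integrable on $(0,T)$ and --- the whole point --- independent of $\epsilon$, which is why one exploits the \emph{uniform} Morrey bound on $\vec w_\epsilon$ rather than its $L^\infty$ bound (the latter blowing up like $\epsilon^{\theta-1}$). Establishing that $w(x)=(1+|x|)^{-2}$, with $|\vec\nabla w|,|\Delta w|\lesssim w$ and $w\in L^1$, is an admissible weight throughout, and proving the weighted interpolation statement of the first step (Corollary~\ref{embed}), are the secondary points requiring care.
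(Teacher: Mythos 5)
Your route is genuinely different from the paper's: the paper proves this theorem in one line, as a corollary of Theorem \ref{theoweight}, via the embedding $\dot M^{p,q}\subset L^p\bigl(\frac{dx}{(1+\vert x\vert)^N}\bigr)$ for $N>3-\frac{3p}{q}$, so all the analysis lives in Theorem \ref{theoweight} and Theorem \ref{stb}. Your direct attempt has a genuine gap exactly at what you call the heart of the proof, the $\epsilon$-uniform Gr\"onwall weight. First, the inclusion $\dot M^{2,\tilde q}\supset\dot M^{p,q}$ with $\tilde q\ge q$ is false: homogeneous Morrey spaces with different second exponents have different scalings and are incomparable, and even for the inhomogeneous spaces one has $M^{p,q}\subset M^{p',q'}$ only for $p'\le p$ \emph{and} $q'\le q$ (one may lower $p$, never raise $q$). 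So you cannot take $\tilde q$ large; you are stuck with $\tilde q\le q$ (and need $\tilde q>3$ for the key inequality). Second, with $\tilde q=q$ your Gr\"onwall factor $\|\vec w_\epsilon(s)\|_{\dot M^{2,q}}^{2/(1-3/q)}\lesssim s^{-s/(1-3/q)}$ is integrable only when $s<1-\frac3q$, whereas the hypothesis is $s<1-\frac2p$; whenever $q<\frac{3p}{2}$ (in particular on the diagonal $p=q$) there are admissible $s$ with $1-\frac3q\le s<1-\frac2p$, so your argument does not cover the stated range. This is exactly the kind of exponent slack the paper warns about in the footnote recalling that the earlier statement with $s<1-\frac2q$ was false.

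This is also why the paper (following Barker) never tries to produce an $\epsilon$-uniform integrable weight: in Theorem \ref{stb} the nonlinear term is controlled by $\|\vec v_\epsilon\|_\infty\lesssim t^{-\delta/2}\epsilon^{\theta-1}$, whose time integral is tamed only by restricting to an $\epsilon$-dependent interval $T_1\sim\epsilon^{\frac{2(1-\theta)}{1-\delta}}$; this yields only the algebraic closeness $\|\vec u(t,.)-\vec v(t,.)\|_{L^2(w\,dx)}\le Ct^\eta$, and uniqueness is then obtained by a second Gr\"onwall argument on $A(t)=t^{-2\eta}\|\sqrt w(\vec u-\vec v)\|_2^2$, in which the $t^\eta$ vanishing absorbs the non-integrable factor $\|\vec u\|_\infty^2\sim 1/t$. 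Since your plan dispenses with this two-step structure, the failure of the uniform factor is fatal to the proposal as written. Secondary points you assert but would still have to prove: the weighted version of the Morrey key inequality (commutators with $\sqrt w$); that the frequency truncation $S_{j(\epsilon)}\vec u_0$ realizes the $K$-functional of $[L^2_w,B^{-\delta}_{\infty,\infty}]_{\theta,\infty}$ while carrying uniform ${\rm bmo}^{-1}$ and Besov--Morrey bounds; and the uniform-in-$\epsilon$ Morrey persistence for $\vec w_\epsilon$. The most economical repair is the paper's own: establish Theorem \ref{theoweight} (via Corollary \ref{embed}, Theorem \ref{solbesov} and Theorem \ref{stb}) and deduce the Morrey statement from the embedding into the weighted Lebesgue space.
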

    
    Let us remark that the statement and proof of Theorem \ref{theomorr} we gave  in \cite{Lem19} was false (we assumed only that $s<1-\frac 2 q$)\footnote{The mistake was due to an incorrect  equality $\rho=\eta\gamma$ while it should have been $\gamma=\eta\rho$; as $\eta<1$, the equality turned to be incorrect.}.

The second one  weakens the integrability requirement on the solution $\vec u$ from the Lebesgue space $L^q$ to the weighted Lebesgue  space $L^q(\frac 1{(1+\vert x\vert)^N}\, dx)$ for some $N\geq 0$.
    \begin{theorem} \label{theoweight} $\ $\\ Let $\vec u_0$ be a divergence-free vector field with $\vec u_0\in L^2\cap   {\rm bmo}^{-1}_0$.  Assume moreover that the mild solution $\vec u$  of the Navier--Stokes equations with initial value $\vec u_0$ such that   $\|\vec u\|_{X_T}\leq \frac 1 {2C_0} $ is such that
    $$ \sup_{0<t<T} t^{s/2} \|\vec u\|_{L^q(\frac 1{(1+\vert x\vert)^N}\, dx)}<+\infty \text{ with } N\geq 0 , 2<q<+\infty \text{ and }0\leq  s<1-\frac 2 q .$$ If  $\vec v$ is a suitable weak Leray solution of the Navier--Stokes equations with the same initial value $\vec u_0$,   then $\vec u=\vec v$ on $(0,T)$. \end{theorem}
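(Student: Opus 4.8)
The plan is to run Barker's scheme (Theorem~\ref{theobar}) with the space $L^2$ replaced everywhere by the weighted space $L^2_w=L^2(w\,dx)$, $w(x)=(1+\vert x\vert)^{-2}$, the loss of the global energy inequality being compensated by the local energy inequality (\ref{eneq}) of the suitable weak solution $\vec v$. Write $w_N(x)=(1+\vert x\vert)^{-N}$; since enlarging $N$ only weakens the hypothesis, we may assume $N>3$, so that $L^\infty\subset L^q(w_N\,dx)$. Throughout we exploit that $\vert x\vert^{-2}\in A_2(\mathbb{R}^3)$ (because $-3<-2<3$): hence $e^{t\Delta}$ and the Leray projection $\mathbb{P}$ are bounded on $L^2_w$, the parabolic smoothing and maximal estimates hold with this weight, and the pointwise bounds $\vert\vec\nabla w\vert\leq 2w^{3/2}\leq 2w$, $\vert\Delta w\vert\lesssim w$ keep every localization error of lower order. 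First I would carry out \emph{Step 1 (reduction to an interpolation membership)}: using Theorem~\ref{solbesov} to rephrase the assumption on $\vec u$ as an assumption on $\vec u_0$ and the embedding lemma (Corollary~\ref{embed}), one shows that $\vec u_0\in L^2\cap{\rm bmo}^{-1}_0$ together with $\sup_{0<t<T}t^{s/2}\|\vec u(t,.)\|_{L^q(w_N\,dx)}<+\infty$ and $0\le s<1-\tfrac2q$ forces $\vec u_0\in[L^2_w,B^{-\delta}_{\infty,\infty}]_{\theta,\infty}$ for a suitable pair $\theta\in(0,1)$, $\delta\in(0,1)$. The inequality $s<1-\tfrac2q$ is exactly what is needed to keep $\delta<1$, i.e.\ to make $B^{-\delta}_{\infty,\infty}$ subcritical for the Navier--Stokes equations; here the exponents must be combined in the right order (the relation is $\gamma=\eta\rho$, not $\rho=\eta\gamma$ — this is where \cite{Lem19} went wrong).

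\emph{Step 2 (comparison with a subcritical solution).} Fix $\epsilon>0$. By the $K$-functional of $[L^2_w,B^{-\delta}_{\infty,\infty}]_{\theta,\infty}$ at the scale $\epsilon$ and the boundedness of $\mathbb{P}$, split $\vec u_0=\vec a_\epsilon+\vec b_\epsilon$ with $\Div\vec a_\epsilon=\Div\vec b_\epsilon=0$, $\|\vec a_\epsilon\|_{L^2_w}\leq C\epsilon^\theta$ and $\|\vec b_\epsilon\|_{B^{-\delta}_{\infty,\infty}}\leq C\epsilon^{\theta-1}$; note $\vec b_\epsilon=\vec u_0-\vec a_\epsilon\in L^2_w$. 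Let $\vec w_\epsilon$ be the mild solution with data $\vec b_\epsilon$. Since $\delta<1$, $B^{-\delta}_{\infty,\infty}$ is subcritical, so $\vec w_\epsilon$ exists on $[0,\tau_\epsilon]$ with $\tau_\epsilon\gtrsim\|\vec b_\epsilon\|_{B^{-\delta}_{\infty,\infty}}^{-2/(1-\delta)}$, it obeys $\|\vec w_\epsilon(t,.)\|_\infty\lesssim t^{-\delta/2}\|\vec b_\epsilon\|_{B^{-\delta}_{\infty,\infty}}$ (and the corresponding bound for $\|\vec\nabla\vec w_\epsilon\|_\infty$), and — because $\vec b_\epsilon\in L^2_w$ and $w\in A_2$ — $\vec w_\epsilon\in\mathcal C([0,\tau_\epsilon],L^2_w)\cap L^2((0,\tau_\epsilon),\dot H^1_w)$ with a weighted energy identity. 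A weighted energy estimate for $\vec r_\epsilon:=\vec u-\vec w_\epsilon$ (a difference of two mild solutions, with data $\vec a_\epsilon$), in which the pressure and the $\vec\nabla w$, $\Delta w$ commutators are absorbed using $\vert\vec\nabla w\vert,\vert\Delta w\vert\lesssim w$ and the available bounds on $\vec w_\epsilon$ and on $\vec u$, then yields $\vec r_\epsilon\in L^\infty((0,\tau_\epsilon),L^2_w)\cap L^2((0,\tau_\epsilon),\dot H^1_w)$ with $\|\vec r_\epsilon\|_{L^\infty_tL^2_w}\lesssim\|\vec a_\epsilon\|_{L^2_w}$; in particular $\vec u=\vec w_\epsilon+\vec r_\epsilon$ on $(0,\tau_\epsilon)$ is "a subcritical solution plus a small weighted-energy remainder''.

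\emph{Step 3 (weighted relative energy between $\vec v$ and $\vec u$, and conclusion).} Put $\vec z=\vec v-\vec u$, so $\vec z(0)=0$ (the two solutions share the datum $\vec u_0$). Combining the local energy inequality (\ref{eneq}) for $\vec v$ tested against $w$, the weighted energy identity for $\vec u$ (legitimate since $\vec u$ is smooth for $t>0$, letting $t\to0$), and the cross term obtained by testing the equation of $\vec v$ against $w\vec u$ and that of $\vec u$ against $w\vec v$ — all admissible because $\vec u$ is smooth for $t>0$, $w$ is smooth, bounded and decaying, the nonnegative measure $-\mu$ is simply dropped, and the pressure terms $\int p\,\vec u\cdot\vec\nabla w$, $\int(2p+\vert\vec v\vert^2)\vec v\cdot\vec\nabla w$ are integrable thanks to $\vert\vec\nabla w\vert\lesssim w$, $\vec v\in L^\infty_tL^2_x\cap L^2_t\dot H^1_x\hookrightarrow L^{10/3}_{t,x}$ and $p\in L^{5/3}_{t,x}$ — one reaches, on $(0,\tau_\epsilon)$, after integrations by parts using $\Div\vec u=\Div\vec v=0$ and inserting $\vec u=\vec w_\epsilon+\vec r_\epsilon$ in the bad term together with a weighted Gagliardo--Nirenberg inequality (a weighted version of the estimate recalled before Theorem~\ref{theomorr}),
\[
\frac{d}{dt}\|\vec z(t)\|_{L^2_w}^2+\|\vec\nabla\vec z(t)\|_{L^2_w}^2\;\leq\;\Phi_\epsilon(t)\,\|\vec z(t)\|_{L^2_w}^2+C\,\|\vec z(t)\|_{L^2_w}^{6},\qquad \Phi_\epsilon(t)\lesssim\|\vec w_\epsilon(t,.)\|_\infty^{2}+\|\vec r_\epsilon(t,.)\|_{L^2_w}^{4},
\]
and $\int_0^{\tau_\epsilon}\Phi_\epsilon<\infty$ because $\|\vec w_\epsilon(t,.)\|_\infty^2\lesssim t^{-\delta}$ with $\delta<1$ and $\|\vec r_\epsilon(t,.)\|_{L^2_w}$ is bounded. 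Since $\vec z(0)=0$ and this differential inequality has no source term, Gronwall (equivalently, comparison with the ODE $\dot h=\Phi_\epsilon h+Ch^3$, $h(0)=0$, whose only solution is $h\equiv0$) forces $\vec z\equiv0$ on $(0,\tau_\epsilon)$, i.e.\ $\vec u=\vec v$ on $(0,\tau_\epsilon)$. Taking e.g.\ $\epsilon=1$ gives $\vec u=\vec v$ on a fixed interval $(0,\tau_1)$; at a suitable $t_1\in(0,\tau_1)$ the two solutions coincide and $\vec u(t_1,.)$ is bounded, so the classical Prodi--Serrin weak--strong uniqueness propagates the equality to all of $(0,T)$.

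The hard part will be Step~1: establishing $\vec u_0\in[L^2_w,B^{-\delta}_{\infty,\infty}]_{\theta,\infty}$ with $\delta<1$ under the sharp hypothesis $s<1-\tfrac2q$, i.e.\ tracking how the time-decay of the weighted-$L^q$ norm of $\vec u$, the $L^2$ information and the ${\rm bmo}^{-1}_0$ information combine through interpolation — the very place where the exponent was miscomputed in \cite{Lem19}. A secondary, purely technical, difficulty is Step~3: carrying out the weighted relative energy computation rigorously, in particular justifying every pressure and $\vec\nabla w$-commutator term and the passage $t\to0$ in the energy identity for $\vec u$, which is exactly what forces us to assume that $\vec v$ is a \emph{suitable} weak solution rather than an arbitrary Leray solution.
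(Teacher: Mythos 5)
Your plan contains two genuine gaps, and they sit exactly at the two places where the paper's proof has to work hardest. First, Step 1 is not merely ``the hard part left to do'': as stated it cannot be carried out, and your normalization points the wrong way. Corollary \ref{embed} (b) only applies when the weight exponent satisfies $0\leq N<\frac 4 q$, so by reducing to $N>3$ you have put yourself outside its range, and for large $N$ the membership $\vec u_0\in [L^2(\frac{dx}{(1+\vert x\vert)^2}), B^{-\delta}_{\infty,\infty}]_{\theta,\infty}$ with $\delta<1$ is precisely what is not directly available. The missing device in the paper is an interpolation in the weight: since $\vec u\in L^\infty_tL^2$ (Proposition \ref{propweight}) and $\sqrt t\,\|\vec u\|_\infty\leq \|\vec u\|_{X_T}$, one has $\sup_{0<t<T} t^{\frac 12(1-\frac 2q)}\|\vec u(t,.)\|_q<+\infty$; combining this unweighted bound with your weighted hypothesis gives, for every $\alpha\in[0,1]$, $\sup_t t^{s_\alpha/2}\|\vec u\|_{L^q((1+\vert x\vert)^{-\alpha N}dx)}<+\infty$ with $s_\alpha=(1-\alpha)(1-\frac 2q)+\alpha s$, and choosing $0<\alpha<\min(1,\frac 4{Nq})$ keeps $s_\alpha<1-\frac 2q$ while bringing $\alpha N$ below $\frac 4q$; only then do Theorem \ref{solbesov} and Corollary \ref{embed} give $\vec u_0\in[L^2(\frac{dx}{(1+\vert x\vert)^2}),H^{-\gamma}_r]_{\theta,\infty}$ with $\gamma+\frac 3r<1$. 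Without this step your argument does not cover general $N\geq 0$.

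Second, the closing differential inequality of your Step 3 is not obtainable, so the scheme does not close even granting Step 1. After inserting $\vec u=\vec w_\epsilon+\vec r_\epsilon$, the dangerous term is $\int \vec r_\epsilon\cdot(\vec z\cdot\vec\nabla \vec z)\,w\,dx$ with $\vec r_\epsilon$ controlled only in $L^\infty_t L^2(w\,dx)\cap L^2_t H^1_w$: no H\"older/Gagliardo--Nirenberg estimate bounds it by $\tfrac14\|\sqrt w\,\vec\nabla\vec z\|_2^2+C\|\vec r_\epsilon\|_{L^2(w\,dx)}^4\|\sqrt w\,\vec z\|_2^2+C\|\sqrt w\,\vec z\|_2^6$ (smallness of $\|\vec r_\epsilon\|_{L^\infty_tL^2_w}$ does not let you absorb the gradient -- this is exactly the obstruction to energy-class weak--strong uniqueness -- and you cannot redistribute the single factor $w$ among three functions when only weighted norms are controlled). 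Estimating $\vec r_\epsilon$ in $L^\infty$ instead gives $\|\vec r_\epsilon(t)\|_\infty\sim t^{-1/2}$, hence a non-integrable coefficient $t^{-1}$, and then $\vec z(0)=0$ alone does not close the Gronwall argument. The paper avoids this by a two-stage argument that your plan skips: it first compares each weighted Leray solution separately with the smooth subcritical solution $\vec v_\epsilon$, whose only contribution to the coefficient is $\|\vec v_\epsilon\|_\infty\lesssim t^{-\delta/2}\epsilon^{\theta-1}\in L^2_t$, and optimizes $\epsilon\sim t^{(1-\delta)/(2(1-\theta))}$ to obtain the algebraic rate $\|\vec u(t,.)-\vec v(t,.)\|_{L^2(w\,dx)}\leq Ct^\eta$ (Theorem \ref{stb}); only then does it run the direct $\vec u$-versus-$\vec v$ energy estimate, in which the non-integrable $\|\vec u(s)\|_\infty^2\sim s^{-1}$ term is neutralized by working with $A(t)=t^{-2\eta}\|\sqrt w(\vec u-\vec v)\|_2^2$ and by the smallness of $\sup_{0<s<\tau}s\|\vec u(s,.)\|_\infty^2$ coming from $\vec u_0\in{\rm bmo}^{-1}_0$. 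The parts of your plan that do match the paper are the verification of the weighted Leray inequality for $\vec v$ through suitability and the Prodi--Serrin continuation from a small time interval to all of $(0,T)$.
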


Of course, Theorem \ref{theomorr} is a corollary of Theorem \ref{theoweight}, as $M^{p,q}\subset L^p(\frac 1{(1+\vert x\vert)^N}dx)$ for $N>3-\frac{3p}q $.\\

The paper is then organized in the following manner:
\begin{itemize} 
\item[$\bullet$] In Section 2, we define stable spaces and collect some technical results on generalized  Besov spaces based on stable spaces.
\item[$\bullet$] In Section 3, we define  potential spaces based on stable spaces and prove  some interpolation estimates.
\item[$\bullet$] In section 4, we give some remarks on the Koch and Tataru solutions for the Navier--Stokes problem.
\item[$\bullet$] In section 5, we study stability estimates for suitable weak Leray solutions with initial data in $L^2\cap [L^2(\frac 1{(1+\vert x\vert)^2},B^{-\delta}_{\infty,\infty}]_{\theta,\infty}$  (see Theorem \ref{stb}).
\item[$\bullet$] In section 6, we prove the uniqueness theorem  (Theorem \ref{theoweight}).
\item[$\bullet$] In section  7, we pay some further comments on Barker's conjecture on  the uniqueness problem.
\end{itemize}


\section{Stable spaces and Besov spaces.}

We define the convolutor space $\mathbb{K}$ by  the following convention:
\begin{itemize} 
\item[$\bullet$] a {\bf suitable kernel} is a function $K\in L^1(\mathbb{R}^3)$ such that $K$ is radial and radially non-increasing (in particular, $K$ is nonnegative); this is noted as $K\in\mathbb{K}_0$
\item[$\bullet$] $f$ is a convolutor if $f\in L^1$ and if there exists $K\in \mathbb{K}_0$ such that $\vert f\vert\leq K$ almost everywhere
\item[$\bullet$] the norm of $f$ in $\mathbb{K}$ is defined as
$$ \|f\|_{\mathbb{K}} =\inf\{ \|K\|_1\ /\ K\in\mathbb{K}_0\text{ and } \vert f\vert\leq K \text{ a.e.}\}.$$
\end{itemize}

One easily checks that $\|\ \|_{\mathbb{K}}$ is a norm and that $(\mathbb{K},\|\ \|_\mathbb{K})$ is a Banach space.

\begin{definition}$\ $\\ A {\bf stable space} of measurable functions on $\mathbb{R}^3$ is a Banach space $E$ such that 
\begin{itemize}
\item[$\bullet$]  $E\subset L^1_{\rm loc}(\mathbb{R}^3)$
\item[$\bullet$]  if $f\in E$ and $g\in L^\infty$, $fg\in E$ and $\|fg\|_E\leq C \|f\|_E \|g\|_\infty$ (where $C$ does not depend on $f$ nor $g$)
\item[$\bullet$]  if $f\in E$ and $g\in \mathbb{K}$, $f*g\in E$ and $\|f*g\|_E\leq  C \|f\|_E \|g\|_{\mathbb{K}}$
(where $C$ does not depend on $f$ nor $g$).
\end{itemize}
\end{definition}

\noindent {\bf Examples of stable spaces}
\begin{enumerate} 
\item[a)]  $E=L^p$, $1\leq p\leq +\infty$.
\item[b)]  
 $E=L^p(w\,  dx)$ where $w$ belongs to the Muckenhoupt class $\mathcal{A}_p$ for some $1<p<+\infty$: if $g\in\mathbb{K}_0$, then $$\vert f*g(x)\vert\leq \|g\|_1 \mathcal{M}_f(x)$$ where $\mathcal{M}_f$ is the Hardy--Littlewood maximal function of $f$; recall that the Hardy--Littlewood maximal function is a bounded sublinear operator on $L^p(w\, dx)$ when $w\in\mathcal{A}_p$ \cite{Ste93}.
\item[c)] $E=L^p_{\rm uloc}$ for some $1\leq p\leq +\infty$, where 
$$ \|f\|_{L^p_{\rm uloc}} =\sup_{x_0\in\mathbb{R}^3} (\int_{B(x_0,1)} \vert f(x)\vert^p\, dx)^{\frac 1 p}.$$ By Minkowski's inequality, we have
$$ \|f*g\|_E\leq \int \vert g(y)\vert \|f(.-y)\|_{L^p_{\rm uloc}} \, dy=\|g\|_1 \|f\|_{L^p_{\rm uloc}}.$$
\item[d)] This example can be generalized to other shift-invariant spaces (for which the norms $\|f\|_E$ and $\|f(.-y)\|_E$ are equal). For instance, we may take $E$ as the Morrey space $M^{p,q}$, $1<p\leq q<+\infty$.\end{enumerate}

  Our next step is to introduce Besov-like Banach spaces based on stable spaces and to describe the regularity of Koch--Tataru solutions when the initial value belongs moreover to the Besov space.

\begin{definition}$\ $\\ Let $T\in (0,+\infty)$. Let $E$ be a  stable space of measurable functions on $\mathbb{R}^3$. For $s>0$ and $1\leq q\leq +\infty$, we define the Besov-like  Banach space $B^{-s}_{E, q}$ as the space of tempered distributions such that  
$$ t^{\frac s 2} \|e^{t\Delta}f\|_E\in L^q((0,T),\frac{dt}t).$$
 The norms $ \|  t^{\frac s 2} \|e^{t\Delta}f\|_E\|_{L^q((0,T),\frac{dt}t)}$ are all equivalent, so that $B^{-s}_{E, q}$ does not depend on $T$.
\end{definition}

  \begin{proof} Assume that  $t^{\frac s 2} \|e^{t\Delta}f\|_E\in L^q((0,T),\frac{dt}t)$ for some $T>0$ and consider $t\geq T$. We have 
  $$ e^{t\Delta} f= \frac{2}T \int_{T/2}^T e^{(t-\theta)\Delta} e^{\theta\Delta}f\, d\theta$$ so that
 \begin{equation*}\begin{split} \|e^{t\Delta}f\|_E \leq& C\frac{2}T \int_{T/2}^T \|e^{\theta\Delta}f\|_E\, d\theta \\\leq& C\frac{2}T  \|\theta^{s/2} \|e^{\theta\Delta}f\|_E\|_{L^q((0,T),\frac{d\theta}\theta)} \|\mathds{1}_{T/2<\theta}\theta^{1-\frac s2} \|_{L^{\frac q{q-1}}((0,T), \frac{d\theta}\theta)}.
 \end{split}\end{equation*} Equivalence of the norms is proved.  \end{proof}

  Remark: this proofs shows as well that, if $1\leq q\leq r\leq +\infty$, then $B^{-s}_{E,q}\subset B^{-s}_{E,r}$. Another obvious property of Besov spaces is that, if $0<s<\sigma$, then $B^{-s}_{E,\infty}\subset B^{-\sigma}_{E,1}$. \\

  The main result in this section is the following theorem:
  
  \begin{theorem}\label{solbesov}$\ $\\ Let $E$ be a  stable space of measurable functions on $\mathbb{R}^3$.
  Let $0<T<+\infty$, and let $\vec u_0\in {\rm bmo}^{-1}$ with $ {\rm  div \,}\vec u_0=0$ and  $\|e^{t\Delta}\vec u_0\|_{X_T}<\frac 1{4C_0}$. Let $\vec u$ be the  solution of  the integral Navier--Stokes equations    on $(0,T)$  such that $\|\vec u\|_{X_T}\leq \frac 1 {2C_0}$. Then the following assertions are equivalent for $0<\sigma<1$ and $2<q\leq +\infty$:\\ (A) $\vec u_0\in B^{-\sigma}_{E, q}$   
  \\ (B)  $ t^{\frac \sigma 2} \|\vec u\|_E\in L^q((0,T),\frac{dt}t)$.
  \end{theorem}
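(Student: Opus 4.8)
The plan is to prove the equivalence by comparing the mild solution $\vec u$ with its linear part $e^{t\Delta}\vec u_0$ and showing that the nonlinear (bilinear) correction $B(\vec u,\vec u)$ lives in a space with strictly better decay, so that its contribution to the $E$-norm weighted by $t^{\sigma/2}$ is negligible in $L^q((0,T),\tfrac{dt}t)$. Write $\vec u = e^{t\Delta}\vec u_0 - B(\vec u,\vec u)$. The implication $(A)\Rightarrow(B)$ then amounts to bounding $t^{\sigma/2}\|B(\vec u,\vec u)\|_E$, using the stability axioms for $E$ (boundedness under convolution with kernels in $\mathbb{K}$, which covers $e^{(t-s)\Delta}\mathbb{P}\Div$ up to the scaling factor), together with the $X_T$-control $\|\vec u\|_{X_T}\le \tfrac1{2C_0}$, which gives pointwise-in-time bounds $\|\vec u(s,\cdot)\|_\infty \lesssim s^{-1/2}$. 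The converse $(B)\Rightarrow(A)$ is symmetric: $e^{t\Delta}\vec u_0 = \vec u + B(\vec u,\vec u)$, and the same estimate on the bilinear term shows $t^{\sigma/2}\|e^{t\Delta}\vec u_0\|_E\in L^q$.

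First I would record the kernel estimate: for $0<s<t$, the operator $e^{(t-s)\Delta}\mathbb{P}\Div$ has a convolution kernel $\vec G_{t-s}$ with $\|\vec G_{t-s}\|_{\mathbb{K}} \lesssim (t-s)^{-1/2}$, so by the stability axiom $\|e^{(t-s)\Delta}\mathbb{P}\Div \vec f\|_E \lesssim (t-s)^{-1/2}\|\vec f\|_E$; combined with the $L^\infty$-multiplier axiom, $\|e^{(t-s)\Delta}\mathbb{P}\Div(\vec u\otimes\vec u)\|_E \lesssim (t-s)^{-1/2}\|\vec u(s,\cdot)\|_\infty \|\vec u(s,\cdot)\|_E$. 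Then
\[
 \|B(\vec u,\vec u)(t,\cdot)\|_E \lesssim \int_0^t (t-s)^{-1/2} s^{-1/2}\,\|\vec u(s,\cdot)\|_E\, ds .
\]
Assuming $(B)$, i.e. $g(s):=s^{\sigma/2}\|\vec u(s,\cdot)\|_E\in L^q(\tfrac{ds}s)$, substitute $\|\vec u(s,\cdot)\|_E = s^{-\sigma/2}g(s)$ and check that
\[
 t^{\sigma/2}\int_0^t (t-s)^{-1/2} s^{-1/2-\sigma/2} g(s)\, ds
\]
defines a bounded (in fact compact on small $T$) operator on $L^q((0,T),\tfrac{dt}t)$: after the change of variables $s=t\tau$ this is a convolution on the multiplicative group $(0,\infty)$ with kernel $\tau\mapsto (1-\tau)^{-1/2}\tau^{-1/2-\sigma/2}\mathds{1}_{\tau<1}$, which is integrable with respect to $\tfrac{d\tau}\tau$ precisely because $0<\sigma<1$ keeps the exponent $-\sigma/2$ strictly above $-1$ near $\tau=0$ and the $(1-\tau)^{-1/2}$ singularity at $\tau=1$ is integrable. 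Schur's test / Young's inequality on the multiplicative group then gives boundedness on $L^q(\tfrac{dt}t)$ for every $q\in[1,\infty]$. This shows $t^{\sigma/2}\|B(\vec u,\vec u)\|_E\in L^q(\tfrac{dt}t)$, hence $(B)\Rightarrow(A)$; reversing the roles of $\vec u$ and $e^{t\Delta}\vec u_0$ and using that $\|e^{t\Delta}\vec u_0\|_\infty\lesssim t^{-1/2}$ by $\vec u_0\in{\rm bmo}^{-1}$ gives $(A)\Rightarrow(B)$.

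The main obstacle, and the step deserving the most care, is the kernel bound $\|\vec G_{t-s}\|_{\mathbb{K}}\lesssim (t-s)^{-1/2}$: one must verify that the Oseen-type kernel of $e^{\tau\Delta}\mathbb{P}\Div$ is dominated, after scaling, by a fixed radial radially-decreasing $L^1$ function (this is classical — the kernel is $\tau^{-2}$ times a Schwartz-class profile, and being dominated by a radial decreasing majorant is exactly membership in $\mathbb{K}_0$), so that the convolution axiom of a stable space applies with the right constant $\tau^{-1/2}$ coming from the $\Div$ derivative and parabolic scaling. A secondary subtlety is that the integral for $B(\vec u,\vec u)$ must be shown to converge absolutely in $E$ near $s=0$ and $s=t$; this follows from the same weighted estimate since $\|\vec u(s,\cdot)\|_E \le s^{-\sigma/2}g(s)$ with $g\in L^q_{\mathrm{loc}}(\tfrac{ds}s)$ and $\sigma<1$, and one may first establish everything for smooth truncations and pass to the limit using the $X_T$ bounds and the Picard approximation $\vec U_n\to\vec u$ recalled after Corollary \ref{kocht}.
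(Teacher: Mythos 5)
The kernel estimate and the weighted convolution bound are fine: your scaling argument (substituting $s=t\tau$ and using the dilation invariance of $\frac{dt}{t}$, so that Minkowski/Young gives boundedness on $L^q((0,T),\frac{dt}{t})$ with constant $\int_0^1(1-\tau)^{-1/2}\tau^{-\frac{1+\sigma}2}\,d\tau<\infty$ precisely when $\sigma<1$) is a cleaner route than the paper's case-by-case Lorentz-space argument, and your treatment of $(B)\Rightarrow(A)$ via $e^{t\Delta}\vec u_0=\vec u+B(\vec u,\vec u)$ together with $\sqrt t\,\|\vec u(t,.)\|_\infty\leq\|\vec u\|_{X_T}$ matches the paper.

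The genuine gap is in the direction $(A)\Rightarrow(B)$. You claim it follows ``by reversing the roles of $\vec u$ and $e^{t\Delta}\vec u_0$'', using $\|e^{t\Delta}\vec u_0\|_\infty\lesssim t^{-1/2}$. But the Duhamel term in $\vec u=e^{t\Delta}\vec u_0-B(\vec u,\vec u)$ involves $\vec u$ itself, not the linear flow, so the only estimate your kernel bound yields is
\begin{equation*}
t^{\sigma/2}\|\vec u(t,.)\|_E\leq t^{\sigma/2}\|e^{t\Delta}\vec u_0\|_E+C\,\|\vec u\|_{X_T}\; t^{\sigma/2}\int_0^t\frac{1}{\sqrt{t-s}}\frac{1}{\sqrt s}\,\|\vec u(s,.)\|_E\,ds,
\end{equation*}
which bounds the quantity you want by itself. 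This is circular: under hypothesis $(A)$ alone nothing guarantees that $\|\vec u(t,.)\|_E$ is even finite, so there is nothing to absorb; and even granting finiteness, absorption would require the smallness $C\,C_\sigma\,\|\vec u\|_{X_T}<1$, which is not available (the constant $C$ depends on $E$ and $C_\sigma\to\infty$ as $\sigma\to1$, while $\|\vec u\|_{X_T}$ is only bounded by $\frac1{2C_0}$). The paper avoids both problems by running the estimate along the Picard iterates $\vec U_0=e^{t\Delta}\vec u_0$, $\vec U_{n+1}=e^{t\Delta}\vec u_0-B(\vec U_n,\vec U_n)$: each $\vec U_n$ satisfies the weighted bound by induction, and the increments obey
$\| t^{\sigma/2}\|\vec U_{n+1}-\vec U_n\|_E\|_{L^q(\frac{dt}t)}\lesssim \sup_t\sqrt t\,\|\vec U_n-\vec U_{n-1}\|_\infty\,(\cdots)$, where the factor $\sup_t\sqrt t\,\|\vec U_n-\vec U_{n-1}\|_\infty\leq(4C_0\|e^{t\Delta}\vec u_0\|_{X_T})^{n}\|e^{t\Delta}\vec u_0\|_{X_T}$ decays geometrically, so the series of increments converges with no smallness assumption on the $E$-based constant. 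In your write-up the Picard approximation appears only as a device to justify absolute convergence of the Duhamel integral; it is in fact the backbone of the proof of $(A)\Rightarrow(B)$, and without it (or an equivalent fixed-point argument in the norm $\| t^{\sigma/2}\|\cdot\|_E\|_{L^q(\frac{dt}t)}$) this direction is not proved.
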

  \begin{proof} Let us remark that the operator $e^{(t-s)\Delta} \mathbb{P}{  \Div }$ is a matrix of convolution operators whose kernels are bounded by $ C \frac 1{(\sqrt{t-s}+\vert x-y\vert)^4}$, hence are controlled in the convolutor norm $\|\, \|_{\mathbb{K}}$ by $C \frac 1{\sqrt{t-s}}$. We thus have the inequality
\begin{equation*}\begin{split} \|B(\vec u,\vec v)\|_E\leq & C \int_0^t \frac 1{\sqrt{ t-s}} \|\vec u\otimes\vec v\|_E\, ds
\\ \leq& C' \sup_{0<s<t}\sqrt s \|\vec u(s,.)\|_\infty \int_0^t \frac 1{\sqrt{ t-s}} \frac 1{\sqrt s} \|\vec v(s,.) \|_E\, ds
 \end{split}\end{equation*} (and a similar estimate interchanging $\vec u$ and $\vec v$ in the last line).
 We thus want to estimate $J(t)= t^{-\frac 1 q+\frac\sigma 2} \int_0^t \frac 1{\sqrt{ t-s}} \frac 1{\sqrt s} s^{\frac 1 q-\frac\sigma 2} L(s)\, ds$ with $L\in L^q((0,T),\, dt)$. 
 \begin{itemize}
 \item[$\bullet$] if  $q=+\infty$, we easily check that $\|J\|_\infty\leq C_\sigma \|L\|_\infty$ (since $\sigma<1$).
 \item[$\bullet$] if  $\sigma\leq \frac 2 q$, we have $s^{\frac 1 q-\frac\sigma 2}\leq t^{\frac 1 q-\frac\sigma 2}$, so that $J(t)\leq \int_0^t\frac 1{\sqrt{t-s}} \frac 1{\sqrt s} L(s)\, ds$. If  $2<q<+\infty$, as $\frac 1{\sqrt s}$  belongs to the Lorentz space $L^{2,\infty}$, we use the product laws and convolution laws in Lorentz spaces to get  that, if $L\in L^q$,  $\frac 1 {\sqrt s}L\in L^{r,q}$ with $\frac 1 r= \frac 1 q+\frac 1 2$ and $\frac 1 {\sqrt s}*(\frac 1 {\sqrt s}L)\in L^{q,q}=L^q$. Thus, $\|J\|_q\leq C \|L\|_q$.
 \item[$\bullet$] if $\sigma> \frac 2 q$, we write
 $$ J(t)\leq C( \int_0^t \frac { (t-s)^{-\frac 1 q+\frac\sigma 2} }{\sqrt{ t-s}} \frac 1{\sqrt s} s^{\frac 1 q-\frac\sigma 2} L(s)\, ds+ \int_0^t \frac 1{\sqrt{ t-s}} \frac { s^{-\frac 1 q+\frac\sigma 2} }{\sqrt s} s^{\frac 1 q-\frac\sigma 2} L(s)\, ds)$$ and  we use again the product laws and convolution laws in Lorentz spaces to get  that, if $L\in L^q$,  $\frac 1 { s^{\frac {1+\sigma}2-\frac 1 q}}L\in L^{r,q}$ with $\frac 1 r= \frac {1+\sigma} 2  $ and
  $ \frac    1 { s^{\frac {1-\sigma}2+\frac 1 q}}*( \frac 1 { s^{\frac {1+\sigma}2-\frac 1 q}}L)\in L^{q,q}=L^q$.  We find again  $\|J\|_q\leq C \|L\|_q$.
 \end{itemize}
 We may now easily check that $(B)\implies (A)$ : we just write $e^{t\Delta}\vec u_0=\vec u+B(\vec u,\vec u)$ and
 $$ \left\| t^{\frac \sigma 2} \|B(\vec u,\vec u)\|_E\right\| _{L^q((0,T),\frac{dt}t)}\leq C \sup_{0<t<T} \sqrt t \|\vec u(t,.)\|_\infty \ \left\| t^{\frac \sigma 2} \| \vec u\|_E\right\| _{L^q((0,T),\frac{dt}t)}.
 $$
 In order to prove $(A)\implies (B)$, we write $\vec u$ as  the limit of $\vec U_n$, where $\vec U_0=e^{t\Delta}\vec u_0$ and $\vec U_{n+1}=e^{t\Delta}\vec u_0-B(\vec U_n,\vec U_n)$. By induction, $\vec U_n$ satisfies
 $$  \left\| t^{\frac \sigma 2} \| \vec U_n\|_E\right\| _{L^q((0,T),\frac{dt}t)}<+\infty$$ and 
 \begin{equation*}\begin{split}& \left\| t^{\frac \sigma 2} \| \vec U_{n+1}-\vec U_n\|_E\right\| _{L^q((0,T),\frac{dt}t)} \\& \leq C  \sup_{0<t<T} \sqrt t \|\vec U_n-\vec U_{n-1}\|_\infty  ( \left\| t^{\frac \sigma 2} \| \vec U_n\|_E\right\| _{L^q((0,T),\frac{dt}t)} +\left\| t^{\frac \sigma 2} \| \vec U_{n-1}\|_E\right\| _{L^q((0,T),\frac{dt}t)}) . \end{split}\end{equation*} 
If $$A_N= \left\| t^{\frac \sigma 2} \| \vec U_0\|_E\right\| _{L^q((0,T),\frac{dt}t)}+\sum_{n=0}^{N-1}  \left\| t^{\frac \sigma 2} \| \vec U_{n+1}-\vec U_n\|_E\right\| _{L^q((0,T),\frac{dt}t)}$$ and $\epsilon=4C_0\|\vec U_0\|_{X_T}$, we have 
$$ \left\| t^{\frac \sigma 2} \| \vec U_N\|_E\right\| _{L^q((0,T),\frac{dt}t)}\leq A_N$$ and
$$A_{N+1}\leq A_N(1+2C\epsilon^{N+1})\leq A_0 \prod_{j=1}^{N+1} (1+2C \epsilon^j).$$ This proves that $ \left\| t^{\frac \sigma 2} \| \vec u\|_E\right\| _{L^q((0,T),\frac{dt}t)}<+\infty$.
  \end{proof}
  
  Let us remark that the assumption $\vec u_0\in {\rm bmo}^{-1}$ can be dropped in some cases, as for example the solutions $\vec u$ in the Serrin class $L^q((0,T),L^r)$ with $\frac 2 q+\frac 3 r\leq 1$ and $3<r<+\infty$. In analogy with $L^r$, we define $r$-stable spaces in the following way:
  
  \begin{definition}$\ $\\
 For $2<r<+\infty$, a  {\bf $r$-stable space} of measurable functions on $\mathbb{R}^3$ is a stable space $E$ such that 
 \begin{itemize}
 \item[$\bullet$]   $E$ is contained in $B^{-\frac 3 r}_{\infty,\infty}$ and, for $f\in E$, $\|f\|_{B^{-\frac 3 r}_{\infty,\infty}}\leq C \|f\|_E$. 
  \item[$\bullet$] $E$ is contained in $L^2_{\rm loc}$
 \item[$\bullet$]  If $f,g\in E$ then $fg\in B^{-\frac 3 r}_{E,\infty}$ and $\|fg\|_{B^{-\frac 3 r}_{E,\infty}}\leq C\|f\|_E \|g\|_E$.
\end{itemize}
  \end{definition}
  
  The Morrey space $M^{2,r}$ is    a  $r$-stable space; it is more precisely the largest   $r$-stable space:
  \begin{lemma}$\ $\\
 Let $E$ be a  $r$-stable space of measurable functions on $\mathbb{R}^3$, where $r\in (2,+\infty)$.   Then 
  $E\subset M^{2,r}$ and $\|f\|_{M^{2,r}}\leq C\|f\|_E$.
 \end{lemma}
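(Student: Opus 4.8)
\medskip
The plan is to chain the three defining properties of an $r$-stable space so as to convert local $L^2$ information about $f$ into pointwise decay of $e^{t\Delta}(|f|^2)$, and then to read off the Morrey bound from the Gaussian kernel. Let $f\in E$. By the property $E\subset L^2_{\rm loc}$ the function $g:=|f|^2$ is a well-defined element of $L^1_{\rm loc}$ (if one works with complex- or vector-valued functions one writes $|f|^2=f\bar f$, resp.\ $\sum_j f_jf_j$, and uses that multiplication by a bounded function preserves $E$). By the product property, $g\in B^{-3/r}_{E,\infty}$ with $\|g\|_{B^{-3/r}_{E,\infty}}\le C\|f\|_E^2$; i.e.\ $\sup_{0<t<T}t^{\frac{3}{2r}}\|e^{t\Delta}g\|_E\le C\|f\|_E^2$.

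The crux is to gain a second factor $t^{-\frac{3}{2r}}$ by passing from $E$ to $L^\infty$. Since $g\in B^{-3/r}_{E,\infty}$, for every $0<t<T$ the function $e^{t\Delta}g$ lies in $E$, so the embedding $E\subset B^{-3/r}_{\infty,\infty}$ gives
$$ \|e^{2t\Delta}g\|_\infty=\|e^{t\Delta}(e^{t\Delta}g)\|_\infty\le C\,t^{-\frac{3}{2r}}\|e^{t\Delta}g\|_E\le C\,t^{-\frac{3}{r}}\|f\|_E^2 . $$
In other words $|f|^2\in B^{-6/r}_{\infty,\infty}$ and $\|e^{\tau\Delta}(|f|^2)\|_\infty\le C\,\tau^{-\frac 3r}\|f\|_E^2$ for $0<\tau<2T$. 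Since the spaces $B^{-s}_{E,q}$ do not depend on $T$, we may take $T\ge 1$, so this bound holds for all $0<\tau\le 1$.

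It remains to use the pointwise lower bound on the heat kernel: for $x_0\in\mathbb R^3$ and $0<\rho\le 1$,
$$ e^{\rho^2\Delta}(|f|^2)(x_0)=\int_{\mathbb R^3}(4\pi\rho^2)^{-3/2}e^{-\frac{|x_0-y|^2}{4\rho^2}}|f(y)|^2\,dy\ \ge\ (4\pi\rho^2)^{-3/2}e^{-1/4}\int_{B(x_0,\rho)}|f(y)|^2\,dy, $$
hence $\int_{B(x_0,\rho)}|f|^2\,dy\le C\rho^3\,\|e^{\rho^2\Delta}(|f|^2)\|_\infty\le C\,\rho^{3-\frac 6r}\|f\|_E^2$, that is $\rho^{\frac 3r-\frac 32}\big(\int_{B(x_0,\rho)}|f|^2\,dy\big)^{1/2}\le C\|f\|_E$. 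Taking the supremum over $x_0\in\mathbb R^3$ and $0<\rho\le 1$ yields $\|f\|_{M^{2,r}}\le C\|f\|_E$, and in particular $E\subset M^{2,r}$.

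I do not expect a genuine obstacle. The two points requiring a little care are that the embedding $E\subset B^{-3/r}_{\infty,\infty}$ must be applied to $e^{t\Delta}(|f|^2)$, which we know to be in $E$, rather than to $|f|^2$ itself, and the innocuous bookkeeping of the parameter $T$, dealt with by the $T$-independence of the Besov-type norms. The conceptual heart is the exponent-doubling embedding $B^{-3/r}_{E,\infty}\subset B^{-6/r}_{\infty,\infty}$; the remainder is the standard heat-semigroup description of Morrey spaces.
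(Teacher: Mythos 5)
Your proof is correct and follows essentially the same route as the paper: the product property gives $|f|^2\in B^{-3/r}_{E,\infty}$, the embedding $E\subset B^{-3/r}_{\infty,\infty}$ applied to $e^{t\Delta}(|f|^2)$ doubles the exponent to yield $\|e^{2t\Delta}(|f|^2)\|_\infty\lesssim t^{-3/r}\|f\|_E^2$, and the Gaussian lower bound on $B(x_0,\rho)$ with $t\sim\rho^2$ gives the Morrey estimate. The only difference is presentational (you isolate the pointwise bound before invoking the kernel lower bound, and spell out the $T$-independence), so nothing further is needed.
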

 \begin{proof}  Let $\rho<1$ and $x_0\in\mathbb{R}^3$. We have
 $$ e^{\rho^2\Delta}(f^2)(x_0)\geq \int_{B(x_0,\rho)} f^2(y) dy \inf_{y\in B(x_0,\rho)} W_{\rho^2}(x_0-y)= \frac  {e^{-\frac {1}{4}}}{(4\pi \rho^2)^{3/2}}  \int_{B(x_0,\rho)} f^2(y) dy$$
 where $W_t(x)=\frac 1{(4\pi t)^{3/2}} e^{-\frac {x^2}{4t}}$. On the other hand, we have
 $$ e^{\rho^2\Delta}(f^2)(x_0)\leq C \rho^{-\frac 3{r}} \|e^{\rho^2\Delta}(f^2)\|_{B^{-\frac 3 r}_{\infty,\infty}}\leq C'  \rho^{-\frac 3{r}} \|e^{\frac{\rho^2} 2\Delta}(f^2)\|_E\leq C'' \rho^{-\frac 6 r} \|f\|_E^2.
 $$ This gives
 $$ \int_{B(x_0,\rho)} f^2(y) dy \leq C \rho^{3-\frac 6 r} \|f\|_E^2$$ and thus $f\in M^{2,r}$.
 \end{proof}
  
   \begin{theorem}\label{equivser}$\ $\\ Let $E$ be a  $r$-stable space of measurable functions on $\mathbb{R}^3$. Let  $\vec u_0\in  E$ with $ {\rm  div \,}\vec u_0=0$. Let $0<\sigma<1$ and  $2<q<  +\infty$, with   $$  \frac 2 q\leq  \sigma\leq 1- \frac 3 r $$  and $q<+\infty$ if $\sigma=1-\frac 3 r$. Then the following assertions are equivalent:\\ (A) $\vec u_0\in B^{-\sigma}_{E, q}$   
  \\ (B)  There exists $T>0$ and a  solution  $\vec u$   of  the integral Navier--Stokes equations    on $(0,T)$ with initial value $\vec u_0$ such that  $ t^{\frac \sigma 2} \|\vec u\|_E\in L^q((0,T),\frac{dt}t)$.
  \end{theorem}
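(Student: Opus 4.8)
The plan is to read both (A) and (B) as descriptions of a fixed point of the bilinear operator $B$ of (\ref{bilin}) in the Banach space
\[
\mathcal E_T=\Bigl\{\vec u \ :\ t^{\sigma/2}\,\|\vec u(t,\cdot)\|_E\in L^q\bigl((0,T),\tfrac{dt}t\bigr)\Bigr\},\qquad \|\vec u\|_{\mathcal E_T}=\bigl\|\,t^{\sigma/2}\|\vec u(t,\cdot)\|_E\,\bigr\|_{L^q((0,T),\frac{dt}t)} .
\]
By construction, $\vec u_0\in B^{-\sigma}_{E,q}$ is the same as $e^{t\Delta}\vec u_0\in\mathcal E_T$, and under (A) the quantity $\|e^{t\Delta}\vec u_0\|_{\mathcal E_T}$ is finite and tends to $0$ as $T\to0$ (absolute continuity of $\int_0^T(\cdot)\,\tfrac{dt}t$). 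I would first note that the hypotheses are consistent only when $r>3$: indeed $0<\tfrac2q\le\sigma\le1-\tfrac3r$. Put $\alpha:=\tfrac12+\tfrac3{2r}\in(\tfrac12,1)$.

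The heart of the argument is a bilinear estimate for $B$ on $\mathcal E_T$. Factoring $e^{(t-s)\Delta}\mathbb P\Div=\bigl(e^{\frac{t-s}2\Delta}\mathbb P\Div\bigr)\circ e^{\frac{t-s}2\Delta}$, I would use that the kernel of $e^{\tau\Delta}\mathbb P\Div$ has $\|\cdot\|_{\mathbb K}$-norm $\lesssim\tau^{-1/2}$ (so, $E$ being stable under convolution by $\mathbb K$, it maps $E$ to $E$ with norm $\lesssim\tau^{-1/2}$), together with the $r$-stability bound $\|e^{\tau\Delta}(fg)\|_E\lesssim\tau^{-3/(2r)}\|fg\|_{B^{-3/r}_{E,\infty}}\lesssim\tau^{-3/(2r)}\|f\|_E\|g\|_E$, to obtain
\[
\|B(\vec u,\vec v)(t,\cdot)\|_E\le C\int_0^t(t-s)^{-\alpha}\,\|\vec u(s,\cdot)\|_E\,\|\vec v(s,\cdot)\|_E\,ds .
\]
Writing $\phi(s)=s^{\sigma/2}\|\vec u(s,\cdot)\|_E$, $\psi(s)=s^{\sigma/2}\|\vec v(s,\cdot)\|_E$ (so $\phi,\psi\in L^q((0,T),\tfrac{dt}t)$ and $\phi\psi\in L^{q/2}((0,T),\tfrac{ds}s)$), this reduces to the one-dimensional claim that $h\mapsto\bigl(t\mapsto t^{\sigma/2}\int_0^t(t-s)^{-\alpha}s^{-\sigma}h(s)\,ds\bigr)$ is bounded from $L^{q/2}((0,T),\tfrac{ds}s)$ to $L^q((0,T),\tfrac{dt}t)$, with operator norm bounded for $0<T\le1$. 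This I would prove exactly as the analogous estimate in the proof of Theorem~\ref{solbesov}: the conditions $\alpha<1$ (i.e. $r>3$) and $\sigma<1$ give local integrability of the kernel in $s$ near $s=t$ and near $s=0$; the bound $\sigma\le1-\tfrac3r$ makes the scaling critical or subcritical (for $\sigma<1-\tfrac3r$ a plain application of Young's inequality on a bounded interval works, giving even an $O(T^\varepsilon)$ norm; the endpoint $\sigma=1-\tfrac3r$ is handled with the product and convolution laws in Lorentz spaces, as in Theorem~\ref{solbesov}); and $\sigma\ge\tfrac2q$ is used, exactly as in the proof of Theorem~\ref{solbesov}, to split the factor $t^{\sigma/2-1/q}$ subadditively as $\le(t-s)^{\sigma/2-1/q}+s^{\sigma/2-1/q}$. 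This bookkeeping of Lorentz exponents is the only real obstacle; everything else follows the standard Picard pattern.

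Granting the bilinear estimate, the rest is routine. For (B)$\Rightarrow$(A): if $\vec u$ solves the integral equations on $(0,T)$ and lies in $\mathcal E_T$, then $e^{t\Delta}\vec u_0=\vec u+B(\vec u,\vec u)$, and the bilinear estimate gives $B(\vec u,\vec u)\in\mathcal E_T$, whence $e^{t\Delta}\vec u_0\in\mathcal E_T$, i.e. $\vec u_0\in B^{-\sigma}_{E,q}$. For (A)$\Rightarrow$(B): since $B:\mathcal E_T\times\mathcal E_T\to\mathcal E_T$ is bounded with constant bounded for $T\le1$ and $\|e^{t\Delta}\vec u_0\|_{\mathcal E_T}\to0$ as $T\to0$, I would pick $T$ small enough that $4\,\|B\|_{\mathcal E_T\times\mathcal E_T\to\mathcal E_T}\,\|e^{t\Delta}\vec u_0\|_{\mathcal E_T}<1$ and run the Picard scheme $\vec U_0=e^{t\Delta}\vec u_0$, $\vec U_{n+1}=e^{t\Delta}\vec u_0-B(\vec U_n,\vec U_n)$; it converges in $\mathcal E_T$ to a solution $\vec u$ of the integral Navier--Stokes equations with $t^{\sigma/2}\|\vec u\|_E\in L^q((0,T),\tfrac{dt}t)$, which is (B).
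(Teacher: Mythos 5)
Your proposal is essentially correct, but it takes a genuinely different route from the paper in the direction (A)$\implies$(B). The paper does not run a fixed point in your space $\mathcal E_T$: it first proves the embedding $B^{-\sigma}_{E,q}\subset B^{-\sigma}_{M^{2,r},q}\subset {\rm bmo}^{-1}_0$ (via the two explicit estimates on $\|e^{t\Delta}f\|_\infty$ and on the Carleson-type quantity), invokes Koch--Tataru's theorem to produce the small solution in $X_T$, and then uses Theorem \ref{solbesov} (propagation of the Besov regularity along the Picard iterates, with the smallness measured in $X_T$) to conclude $t^{\sigma/2}\|\vec u\|_E\in L^q(dt/t)$; this route buys the extra information that the solution is the small $X_T$ (Koch--Tataru) solution, which is what the rest of the paper actually uses, whereas your direct Kato-type contraction in $\mathcal E_T$ is more self-contained (no ${\rm bmo}^{-1}$, no Koch--Tataru) but only yields a solution in $\mathcal E_T$, and identifying it with the mild solution in $X_T$ would require an additional argument. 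For (B)$\implies$(A) your argument is close in spirit to the paper's: both exploit $r$-stability to gain $(\,\cdot\,)^{-3/(2r)}$ through the heat flow and then the product/convolution laws in Lorentz spaces with exactly the exponents $\frac1{p_0}=\sigma$, kernel $\tau^{-(1-\sigma+\frac1q)}$, landing in $L^{q,q/2}\subset L^q$; the difference is that you factor $e^{(t-s)\Delta}\mathbb P\Div=(e^{\frac{t-s}2\Delta}\mathbb P\Div)\,e^{\frac{t-s}2\Delta}$ inside the Duhamel integral, while the paper averages $e^{2t\Delta}\vec u_0=\frac2t\int_{t/2}^t e^{(2t-s)\Delta}(\vec u+B(\vec u,\vec u))\,ds$ and handles the linear term by a maximal-function bound, applying the extra heat smoothing at time comparable to $t$ (so the gain is $t^{-3/(2r)}$, and the subcritical slack is extracted as a power $T^{\frac12-\frac\sigma2-\frac3{2r}}$ up front). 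One small caveat in your sketch: in the subcritical case $\sigma<1-\frac3r$ a ``plain Young'' bound does not literally apply because of the $s^{-\sigma}$ weight; the clean fix is to write $(t-s)^{-\alpha}\le T^{1-\frac\sigma2-\alpha}(t-s)^{-(1-\frac\sigma2)}$ and reduce everything to the critical Lorentz computation, which also makes the operator norm uniform for $T\le1$ as you need for the contraction; with that adjustment, and since $q<+\infty$ guarantees $\|e^{t\Delta}\vec u_0\|_{\mathcal E_T}\to0$ as $T\to0$, your argument goes through.
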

  
  (This theorem thus holds for solutions $\vec u\in L^q((0,T), E)$ under the Serrin condition $\frac 2 q+\frac 3 r\leq 1$.)
  \begin{proof} $ (A)\implies (B)$  is a direct consequence of Theorem \ref{solbesov} and of the embedding $B^{-\sigma}_{M^{2,r},q}\subset {\rm bmo}^{-1}_0$ for $\sigma\leq 1-\frac 3 r$ and $(\sigma,q)\neq(1-\frac 3 r,\infty)$. Indeed, we have, for $0<t<1$,
 \begin{equation*}\begin{split} \|e^{t\Delta}f\|_\infty \leq&  \frac{2}t \int_{t/2}^t \|e^{\theta\Delta}f\|_\infty\, d\theta 
 \\\leq&  C \frac{2}t  t^{-\frac 3{2r}}  \int_{t/2}^t \|e^{\frac\theta2\Delta}f\|_{M^{2,r}}\, d\theta
 \\\leq& C' t^{-1-\frac 3{2r}}   \|\theta^{\sigma/2} \|e^{\theta\Delta}f\|_{M^{2,r}}\|_{L^q((0,t), {d\theta} )}  \| \theta^{ -\frac \sigma2} \|_{L^{\frac q{q-1}}((t/2,t),  {d\theta} )}.
 \\ \leq& C'' t^{-1-\frac 3{2r}}   t^{\frac 1 q}  \|\theta^{\sigma/2} \|e^{\theta\Delta}f\|_{M^{2,r}}\|_{L^q((0,t), \frac{d\theta} \theta)} t^{1-\frac 1 q}t^{-\frac\sigma 2}
 \\ \leq& C''' t^{-1/2} t^{\frac{1-\sigma-3/r}2}   (\int_0^t (\theta^{\sigma/2} \|e^{\theta\Delta}f\|_{M^{2,r}})^q\frac{d\theta}\theta)^{1/q}
 \end{split}\end{equation*} 
 and 
 \begin{equation*}\begin{split} 
 \int_0^t \int_{B(x_0,\sqrt t)} \vert e^{s\Delta}f\vert^2\, dy\, ds\leq & C \int_0^t \|e^{s\Delta}f\|_{M^{2,r}}^2 t^{3/2-3/r}\, ds\\ \leq& C' t^{3/2-3/r} \| s^{\frac \sigma 2} \|e^{s\Delta}f\|_{M^{2,r}} \|_{L^q((0,t),\frac{ds}{s^\sigma})}^2 \|\mathds{1}\|_{L^{\frac q{q-2}}((0,t),\frac{ds}{s^\sigma})} 
 \\ \leq& C''  t^{3/2-3/r} t^{(1-\sigma)\frac 2 q}  \| s^{\frac \sigma 2} \|e^{s\Delta}f\|_{M^{2,r}} \|_{L^q((0,t),\frac{ds}{s})}^2  t^{(1-\sigma)(1-\frac 2 q)}
 \\ \leq& C'' t^{3/2} t^{1-\sigma-\frac 3 r}   (\int_0^t (s^{\sigma/2} \|e^{s\Delta}f\|_{M^{2,r}})^q\frac{ds}s)^{2/q}.
  \end{split}\end{equation*} 
  We now prove $(B)\implies (A)$. We use again the identity
  $$ e^{t\Delta}\vec u_0= \frac 2 t \int_{t/2}^t e^{(t-s)\Delta}e^{s\Delta}\vec u_0\, ds$$ and get
  $$ e^{2t\Delta}\vec u_0= \frac 2 t \int_{t/2}^t e^{(2t-s)\Delta}\vec u(s,.)\, ds+ \frac 2 t \int_{t/2}^t e^{(2t-s)\Delta}B(\vec u,\vec u)\, ds=\vec v(t,.)+\vec w(t,.).$$
  We want to estimate $\| t^{\sigma/2}\|e^{2t\Delta}\vec u_0\|_E\|_{L^q((0,T),\frac{dt}t)}=\|t^{\sigma/2-1/q} \|e^{2t\Delta}\vec u_0\|_E\|_{L^q((0,T),dt)}$.

 We have
  \begin{equation*}\begin{split}  t^{\sigma/2-1/q} \|\vec v(t,.)\|_E \leq& C t^{\sigma/2-1/q}\frac 2 t \int_{t/2}^t \| \vec u\|_E\, ds
  \\ \leq& C\frac 2 t  \int_{t/2}^t \| s^{\sigma/2-1/q} \|\vec u\|_E\| \, ds  \\ \leq& 4C \mathcal{M}_{s^{\sigma/2-1/q} \|\vec u\|_E}(t)
\end{split}\end{equation*}  and thus $t^{\sigma/2-1/q} \|\vec v(t,.)\|_E \in L^q((0,T),\ dt)$.

On the other hand, we have
 \begin{equation*}\begin{split} \|\vec w(t,.)\|_E \leq& \sup_{t/2\leq s\leq t} \|\int_0^s e^{( \frac{3 t}2-\tau)\Delta} \mathbb{P}{\rm div }\,  e^{\frac t2 \Delta}(\vec u\otimes\vec u)\, d\tau\|_E
 \\ \leq& C  \int_0^t \frac 1{\sqrt{\frac{3t}2-\tau}} \|e^{\frac t2 \Delta}(\vec u\otimes\vec u)\|_E\, d\tau
 \\ \leq& C'   \int_0^t    \frac {t^{\frac 1 2 - \sigma +\frac 1 q}}{( t-\tau)^{1-\sigma +\frac 1 q}}  \|e^{  t\Delta}\vert \vec u\vert^2\|_E  d\tau
 \\ \leq& C'' t^{\frac 1 2-\sigma +\frac 1 q-\frac3{2r} } \int_0^t   \frac 1{( t-\tau)^{1- \sigma +\frac 1 q}}  \|\vec u\|_E^2 d\tau
\end{split}\end{equation*}   and thus 
 \begin{equation*}\begin{split} t^{\sigma/2-1/q}  \|\vec w(t,.)\|_E \leq&  C  T^{\frac 1 2-\frac \sigma 2-\frac3{2r}}  \int_0^t   \frac 1{( t-\tau)^{1- \sigma +\frac 1 q}}     \|\vec u\|_E^2 d\tau
 \\ =& CT^{\frac 1 2-\frac\sigma 2-\frac3{2r} } \int_0^t  \frac 1{( t-\tau)^{1- \sigma +\frac 1 q}}  \tau^{-\sigma+\frac 2 q} (\tau^{\sigma/2-\frac 1 q} \|\vec u\|_E)^2 d\tau. 
\end{split}\end{equation*} 
If $J(\tau)= \tau^{\sigma/2-\frac 1 q} \|\vec u\|_E$, we have $J(\tau)\in L^q((0,T),d\tau)$, hence $J^2\in L^{q/2}((0,T),d\tau)$, $ \tau^{-\sigma +\frac 2 q} J^2\in L^{p_0,q/2}((0,T),\, dt)$ with $\frac 1 {p_0}= \frac 2 q+\sigma-\frac 2 q=\sigma$ and $\frac 1{  \tau^{1-\sigma+\frac 1 q}}* (\tau^{-\sigma+\frac 2 q} J^2)\in L^{p_1,q/2}((0,T),\, dt)$ with $\frac 1 {p_1}= \frac 1 {p_0}+ 1-\frac 1 \sigma+\frac 1 q-1  =\frac 1 q$.

Thus $t^{\sigma/2-1/q} \|e^{2t\Delta}\vec u_0(t,.)\|_E \in L^q((0,T),\ dt)$ and $\vec u_0\in B^{-\sigma}_{E,q}$.
\end{proof}

The case $(\sigma,q)=(1-\frac 3 r,+\infty)$ can be treated in a similar way:

   \begin{theorem}\label{equivbar}$\ $\\ Let $E$ be a  $r$-stable space of measurable functions on $\mathbb{R}^3$ with $3<r<+\infty$. Let  $\vec u_0\in  E$ with $ {\rm  div \,}\vec u_0=0$.  Then the following assertions are equivalent:\\ (A) $\vec u_0\in B^{-1+\frac 3 r}_{E, \infty}$   and $\lim_{t\rightarrow 0} t^{\frac 1 2-\frac 3{2r}} \|e^{t\Delta}\vec u_0\|_E=0$.
  \\ (B)  There exists $T>0$ and a  solution  $\vec u$   of  the integral Navier--Stokes equations    on $(0,T)$ with initial value $\vec u_0$ such that  $\sup_{0<t<T}  t^{\frac 1 2-\frac 3{2r}} \| \vec u \|_E<+\infty$
  and $\lim_{t\rightarrow 0} t^{\frac 1 2-\frac 3{2r}} \| \vec u \|_E=0$.

  \end{theorem}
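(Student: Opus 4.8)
The plan is to mimic the proofs of Theorem~\ref{solbesov} and of Theorem~\ref{equivser}, specialised to $q=+\infty$ and to the critical exponent $\sigma=1-\frac 3 r\in(0,1)$; note that then $\frac 1 2-\frac 3{2r}=\frac\sigma 2$ and $1-\sigma-\frac 3 r=0$. The only genuinely new point is the propagation of the vanishing conditions as $t\to 0$.

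\emph{Proof of $(A)\implies(B)$.} I would first check that (A) forces $\vec u_0\in{\rm bmo}^{-1}_0$. Since an $r$-stable space is continuously embedded in $M^{2,r}$ (by the Lemma above), the two estimates of the proof of Theorem~\ref{equivser} that bound $\sqrt t\,\|e^{t\Delta}\vec u_0\|_\infty$ and $t^{-3/2}\int_0^t\int_{B(x_0,\sqrt t)}|e^{s\Delta}\vec u_0|^2\,dy\,ds$ remain valid with the $M^{2,r}$-norm replaced by the larger $E$-norm. At the critical exponent the scaling factor $t^{(1-\sigma-3/r)/2}$ appearing there equals $1$, so both pieces of $\|e^{t\Delta}\vec u_0\|_{X_T}$ are dominated by $\sup_{0<s<T}s^{\sigma/2}\|e^{s\Delta}\vec u_0\|_E$, which tends to $0$ as $T\to 0$ precisely because of the assumption $\lim_{t\to 0}t^{\sigma/2}\|e^{t\Delta}\vec u_0\|_E=0$. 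Hence $\vec u_0\in{\rm bmo}^{-1}_0$, and for $T$ small enough Corollary~\ref{kocht} produces the mild solution $\vec u$ on $(0,T)$ with $\|\vec u\|_{X_T}\le\frac 1{2C_0}$. Applying Theorem~\ref{solbesov} with $q=+\infty$ (an admitted value there), the membership $\vec u_0\in B^{-\sigma}_{E,\infty}$ is equivalent to $\sup_{0<t<T}t^{\sigma/2}\|\vec u(t,.)\|_E<+\infty$. It remains to show that $t^{\sigma/2}\|\vec u(t,.)\|_E\to 0$ as $t\to 0$: from $\vec u=e^{t\Delta}\vec u_0-B(\vec u,\vec u)$, the bilinear bound $\|B(\vec u,\vec u)(t,.)\|_E\le C\|\vec u\|_{X_T}\int_0^t\frac{ds}{\sqrt{t-s}\,\sqrt s}\,\|\vec u(s,.)\|_E$ from the proof of Theorem~\ref{solbesov}, and the convergent Beta integral $\int_0^t(t-s)^{-1/2}s^{-1/2-\sigma/2}\,ds=C\,t^{-\sigma/2}$ (finite since $\sigma<1$), one gets
\[ t^{\sigma/2}\|B(\vec u,\vec u)(t,.)\|_E\le C'\,\|\vec u\|_{X_T}\,g(t), \qquad g(t):=\sup_{0<s\le t}s^{\sigma/2}\|\vec u(s,.)\|_E. \]
Shrinking $T$ so that $C'\|\vec u\|_{X_T}\le\frac 1 2$ --- which is possible since $\|\vec u\|_{X_T}\le 2\|e^{t\Delta}\vec u_0\|_{X_T}\to 0$ as $T\to 0$ --- we obtain $g(t)\le 2\sup_{0<s\le t}s^{\sigma/2}\|e^{s\Delta}\vec u_0\|_E$, which tends to $0$ as $t\to 0$; equivalently one may propagate the vanishing along the Picard iterates, as in the proof of Theorem~\ref{solbesov}. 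This is (B).

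\emph{Proof of $(B)\implies(A)$.} Here I would follow the $(B)\implies(A)$ part of Theorem~\ref{equivser} with $q=+\infty$. Write $e^{2t\Delta}\vec u_0=\vec v(t,.)+\vec w(t,.)$, where $\vec v(t,.)=\frac 2 t\int_{t/2}^t e^{(2t-s)\Delta}\vec u(s,.)\,ds$ and $\vec w(t,.)=\frac 2 t\int_{t/2}^t e^{(2t-s)\Delta}B(\vec u,\vec u)(s,.)\,ds$. Since the heat kernels $W_{2t-s}$ have $\mathbb{K}$-norm $\le 1$, one has $t^{\sigma/2}\|\vec v(t,.)\|_E\le C\sup_{t/2\le s\le t}s^{\sigma/2}\|\vec u(s,.)\|_E$, which is bounded and tends to $0$ as $t\to 0$ by hypothesis (B). For $\vec w$, write $e^{(2t-s)\Delta}B(\vec u,\vec u)(s,.)=\int_0^s e^{(3t/2-\tau)\Delta}\mathbb{P}\Div\bigl(e^{(t/2)\Delta}(\vec u\otimes\vec u)(\tau,.)\bigr)\,d\tau$; using that $e^{\rho\Delta}\mathbb{P}\Div$ has $\mathbb{K}$-norm $O(\rho^{-1/2})$, that $r$-stability gives $\|e^{(t/2)\Delta}(\vec u\otimes\vec u)(\tau,.)\|_E\le C\,t^{-3/(2r)}\|\vec u(\tau,.)\|_E^2$, and the Beta integral $\int_0^t(t-\tau)^{-1/2}\tau^{-\sigma}\,d\tau=C\,t^{1/2-\sigma}$, one obtains
\[ t^{\sigma/2}\|\vec w(t,.)\|_E\le C\,t^{\frac 1 2-\frac\sigma 2-\frac 3{2r}}\Bigl(\sup_{0<\tau<t}\tau^{\sigma/2}\|\vec u(\tau,.)\|_E\Bigr)^2. \]
Since $\frac 1 2-\frac\sigma 2-\frac 3{2r}=0$ at the critical exponent, the right-hand side is bounded and tends to $0$ as $t\to 0$. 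Thus $t^{\sigma/2}\|e^{2t\Delta}\vec u_0\|_E$ is bounded on $(0,T/2)$ and tends to $0$ as $t\to 0$; rescaling $t\mapsto t/2$ yields $\vec u_0\in B^{-\sigma}_{E,\infty}$ together with $\lim_{t\to 0}t^{\sigma/2}\|e^{t\Delta}\vec u_0\|_E=0$, that is (A).

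\emph{The main obstacle.} In Theorem~\ref{equivser} the strict inequality $\sigma<1-\frac 3 r$ (equivalently $q<+\infty$ when $\sigma=1-\frac 3 r$) left a strictly positive power of $t$ (or of $T$) in the control of the bilinear term, providing room and smallness for free. At the endpoint $\sigma=1-\frac 3 r$, $q=+\infty$ this exponent is exactly $0$, so no such margin remains; the whole point is to carry the vanishing conditions $\lim_{t\to 0}t^{\sigma/2}\|\cdot\|_E=0$ through every estimate. In particular, in the direction $(A)\implies(B)$ one must resolve a mild circularity --- the bound for $t^{\sigma/2}\|B(\vec u,\vec u)(t,.)\|_E$ is expressed in terms of $g(t)=\sup_{0<s\le t}s^{\sigma/2}\|\vec u(s,.)\|_E$, the very quantity one wants to control --- which is disposed of by a Gronwall-type absorption on a sufficiently small time interval (or, equivalently, along the Picard iterates).
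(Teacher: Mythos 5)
Your proof is correct and follows essentially the route the paper intends: the paper leaves the endpoint case $(\sigma,q)=(1-\frac 3 r,+\infty)$ as being ``treated in a similar way'' to Theorem \ref{equivser}, with the vanishing condition serving precisely to place $\vec u_0$ in ${\rm bmo}^{-1}_0$ and hence to obtain the local mild solution, which is exactly how you proceed. Your specialisation of the estimates to $q=+\infty$ (sup bounds and Beta integrals in place of the Lorentz-space convolution laws) together with the absorption argument carrying $\lim_{t\rightarrow 0}t^{\frac 1 2-\frac 3{2r}}\|\cdot\|_E=0$ through the bilinear term is the intended adaptation.
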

   \noindent{\bf Remark:} We have the embedding 
   $B^{-1+\frac 3 r}_{E, \infty}\subset {\rm bmo}^{-1}$, 
   but this   does not grant existence of a solution. The extra condition $\lim_{t\rightarrow 0} t^{\frac 1 2-\frac 3{2r}} \|e^{t\Delta}\vec u_0\|_E=0$ is used to get $\vec u_0\in {\rm bmo}^{-1}_0$, and thus to have existence of a local solution.

  \section{Potential spaces and interpolation}
  
  If $E$ is a stable space, we define, for $s\in\mathbb{R}$, the potential space $H^s_E$ as $H^s_E=(\Id-\Delta)^{-s/2}E$, normed with $\|f\|_{H^s_E}=\|(\Id-\Delta)^{s/2}f\|_E$. For positive $s$, we have an obvious comparison of the potential space $H^{-s}_E$ with the Besov spaces:
 \begin{lemma} Let $E$ be  a stable space, and $s>0$. Then, $$ B^{-s}_{E,1}\subset H^{-s}_E \subset B^{-s}_{E,\infty}.$$\end{lemma}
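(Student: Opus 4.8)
The plan is to realize the potential spaces $H^{-s}_E$ as intermediate between the two Besov scales by comparing the heat semigroup $e^{t\Delta}$ with the Bessel potential $(\Id-\Delta)^{-s/2}$, exploiting that both are given by convolution with kernels controlled in the $\mathbb{K}$-norm and that $E$ is stable under such convolutions. The two inclusions are dual in spirit but are proved by the same device, so I would treat them in parallel.

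For the inclusion $B^{-s}_{E,1}\subset H^{-s}_E$, the idea is to write $(\Id-\Delta)^{-s/2}$ as a superposition of heat operators via the subordination-type formula
\begin{equation*}
(\Id-\Delta)^{-s/2}=\frac{1}{\Gamma(s/2)}\int_0^\infty t^{s/2}e^{-t}e^{t\Delta}\,\frac{dt}{t}.
\end{equation*}
Given $f\in B^{-s}_{E,1}$, apply $(\Id-\Delta)^{s/2}$ to... actually more cleanly: one wants to show $f\in H^{-s}_E$, i.e.\ $g:=(\Id-\Delta)^{s/2}f\in E$. Write $g=(\Id-\Delta)^{s/2}f$ and use the representation above with $f$ in place of the general argument to get $f=\frac{1}{\Gamma(s/2)}\int_0^\infty t^{s/2}e^{-t}e^{t\Delta}g\,\frac{dt}{t}$; but it is $g$ that is unknown. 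Instead I would argue directly: since $e^{t\Delta}$ and $(\Id-\Delta)^{s/2}e^{t\Delta}$ differ by an operator that is, for each fixed $t$, convolution with an $L^1$ kernel whose $\mathbb{K}$-norm is $O(t^{-s/2})$ for $0<t\le 1$ (and exponentially small for $t\ge 1$), we get $\|(\Id-\Delta)^{s/2}e^{t\Delta}f\|_E\le C\,t^{-s/2}\|e^{(t/2)\Delta}f\|_E$ for $0<t\le 1$. Then, using the subordination formula to write $(\Id-\Delta)^{-s/2}h$ for $h$ a test function and integrating against the definition, one obtains
\begin{equation*}
\|(\Id-\Delta)^{s/2}f\|_E\le C\int_0^1 t^{-s/2}\|e^{(t/2)\Delta}f\|_E\,\frac{dt}{t}+(\text{tail})=C\|t^{s/2}\|e^{t\Delta}f\|_E\|_{L^1((0,1),dt/t)}+\cdots,
\end{equation*}
where the tail term for $t\ge 1$ is controlled by $\|e^{\Delta}f\|_E$ (finite, by the equivalence-of-norms remark following the definition of $B^{-s}_{E,q}$). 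This gives $f\in H^{-s}_E$ with the desired bound. The rigorous justification of "$(\Id-\Delta)^{s/2}e^{t\Delta}$ is convolution with a kernel of $\mathbb{K}$-norm $\lesssim t^{-s/2}$" follows from scaling plus the fact that $(\Id-\Delta)^{s/2}$ applied to a Gaussian is a radial, radially-decreasing–dominated integrable function; this is the one routine computation I would defer.

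For the inclusion $H^{-s}_E\subset B^{-s}_{E,\infty}$, let $f=(\Id-\Delta)^{-s/2}g$ with $g\in E$. Then for $0<t\le 1$,
\begin{equation*}
e^{t\Delta}f=e^{t\Delta}(\Id-\Delta)^{-s/2}g,
\end{equation*}
and $e^{t\Delta}(\Id-\Delta)^{-s/2}$ is, by the same scaling argument, convolution with a kernel whose $\mathbb{K}$-norm is $O(t^{-s/2})$ as $t\to 0^+$ (and bounded for $t$ bounded). Hence $\|e^{t\Delta}f\|_E\le C\,t^{-s/2}\|g\|_E$, i.e.\ $\sup_{0<t<1}t^{s/2}\|e^{t\Delta}f\|_E\le C\|g\|_E=C\|f\|_{H^{-s}_E}$, which is exactly $f\in B^{-s}_{E,\infty}$.

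The main obstacle is purely the kernel bookkeeping: one must check that the convolution kernels of $(\Id-\Delta)^{s/2}e^{t\Delta}$ and $e^{t\Delta}(\Id-\Delta)^{-s/2}$ are dominated (up to constants $\sim t^{-s/2}$ near $t=0$) by suitable kernels in $\mathbb{K}_0$, uniformly in $t\in(0,1]$, with an integrable, rapidly decaying contribution for $t\ge 1$. Once that is in hand, the stability axioms for $E$ (boundedness of convolution by $\mathbb{K}$-elements) and the subordination formula do all the work, and the two inclusions drop out as above. Everything else—interchanging the $\int_0^\infty \,dt/t$ with the $E$-norm, splitting at $t=1$, invoking the norm equivalence for $B^{-s}_{E,q}$ across different truncation times $T$—is routine.
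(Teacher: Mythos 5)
Your proposal runs into a genuine problem caused by a misreading of the definition of the potential space. In the paper, $H^{-s}_E=(\Id-\Delta)^{s/2}E$ with $\|f\|_{H^{-s}_E}=\|(\Id-\Delta)^{-s/2}f\|_E$; so $f\in H^{-s}_E$ means $(\Id-\Delta)^{-s/2}f\in E$, not $(\Id-\Delta)^{s/2}f\in E$ as you state. With your reading, the first inclusion would assert that elements of the negative-regularity space $B^{-s}_{E,1}$ have positive potential regularity, which is false (take $E=L^2$), and this shows up concretely in your display: the integral $\int_0^1 t^{-s/2}\|e^{(t/2)\Delta}f\|_E\,\frac{dt}{t}$ carries the weight $t^{-s/2}$, is not the $B^{-s}_{E,1}$ norm (whose weight is $t^{+s/2}$), and generically diverges when $\|e^{t\Delta}f\|_E\sim t^{-s/2}$; the asserted equality with $C\bigl\| t^{s/2}\|e^{t\Delta}f\|_E\bigr\|_{L^1((0,1),dt/t)}$ is simply false. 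With the correct reading, the argument is precisely the one you set up and then abandoned as useless: apply the subordination formula directly to $f$, namely $(\Id-\Delta)^{-s/2}f=\frac1{\Gamma(s/2)}\int_0^\infty e^{-t}t^{s/2}e^{t\Delta}f\,\frac{dt}t$, take $E$-norms, split at $t=1$, and bound the tail by $\|e^{\Delta}f\|_E$ (the heat kernel lies in $\mathbb{K}_0$ with norm $1$, so $\|e^{t\Delta}f\|_E\leq C\|e^{\Delta}f\|_E$ for $t\geq1$, and $\|e^{\Delta}f\|_E$ is controlled by the $B^{-s}_{E,1}$ norm). That is the paper's proof, and it needs no kernel estimate beyond the Gaussian itself.

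For the second inclusion the same sign swap occurs: the general element is $f=(\Id-\Delta)^{s/2}g$ with $g\in E$, not $f=(\Id-\Delta)^{-s/2}g$, so the operator to control is $e^{\theta\Delta}(\Id-\Delta)^{s/2}$, and the whole content of this half of the lemma is the bound $\|e^{\theta\Delta}(\Id-\Delta)^{s/2}g\|_E\leq C\theta^{-s/2}\|g\|_E$, which you defer as ``routine kernel bookkeeping.'' For non-integer $s$ this is exactly the nontrivial point; the paper proves it by factoring $(\Id-\Delta)^{s/2}=(\Id-\Delta)^{N}(\Id-\Delta)^{s/2-N}$ with an integer $N>s/2$, applying the subordination formula to the negative-order factor, and using the elementary estimates $\|\partial^\alpha e^{(t+\theta)\Delta}g\|_E\leq C_\alpha(t+\theta)^{-\vert\alpha\vert/2}\|g\|_E$ for $\vert\alpha\vert\leq 2N$, then integrating. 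Some such argument (or a direct verification that the kernel of $(\Id-\Delta)^{s/2}e^{\theta\Delta}$ is dominated by an element of $\mathbb{K}_0$ of norm $O(\theta^{-s/2})$) must be supplied; as written it is an assertion, not a proof. Incidentally, the $\mathbb{K}$-norm of the operator you actually analyzed, $e^{t\Delta}(\Id-\Delta)^{-s/2}$, is $O(1)$ rather than $O(t^{-s/2})$, since the Bessel kernel of positive order already lies in $\mathbb{K}_0$ --- a further sign that the operator treated in your argument is not the one the lemma requires.
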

 \begin{proof}
  Indeed, we have
  $$ (\Id-\Delta)^{-s/2} =\frac 1{\Gamma(s/2)} \int_0^{+\infty} e^{-t} e^{t\Delta} t^{s/2}\frac{dt}t.$$ If $f$ belongs to $B^{-s}_{E,1}$, then $ t^{s/2}\|e^{t\Delta}f\|_E\in L^1((0,1),\frac{dt}t)$ while $ \|e^\Delta f\|_1\leq   \|f\|_{B^{-s}_{E,\infty}}\leq C  \|f\|_{B^{-s}_{E,1}}$ so that
  $$ \|f\|_{H^{-s}_E} \leq  \frac 1{\Gamma(s/2)} (\int_0^1 t^{s/2} \|f \|_E \frac{dt}t + C \|e^{\Delta}f\|_E \int_0^{+\infty} e^{-t} t^{s/2}\frac{dt}t)\leq C' \|f\|_{B^{-s}_{E,1}} .$$
  Conversely, if $f\in H^{-s}_E $, $f=(\Id-\Delta)^{s/2} g$ and if $0<\theta<1$, then we pick $N\in\mathbb{N}$ with $N>s/2$ and write
 \begin{equation*}\begin{split}e^{\theta\Delta} f=&e^{\theta\Delta} (\Id-\Delta)^{N}   (\Id-\Delta)^{s/2-N} g\\=&\frac 1{\Gamma(N-s/2)}\int_0^{+\infty} e^{-t} (\Id-\Delta)^N e^{(t+\theta)\Delta}g \; t^{N-\frac s 2}\frac{dt}t. \end{split}\end{equation*}
 For $\alpha\in \mathbb{N}^3$, with $0\leq \vert \alpha\vert\leq 2N$, we have
 $$ \| \partial^\alpha e^{(t+\theta)\Delta}g \|_E\leq C_\alpha (t+\theta)^{-\frac {\vert \alpha\vert }2}\|g\|_E\leq C_\alpha (1+(t+\theta)^{-N})\|g\|_E$$ so that  
 \begin{equation*}\begin{split} \|e^{\theta\Delta} f\|_E \leq &C \|g\|_E \int_0^{+\infty} e^{-t}  (1+(t+\theta)^{-N}) \; t^{N-\frac s 2}\frac{dt}t\\ \leq& C \|g\|_E (\Gamma(N-s/2)+\int_0^\theta  t^{N-\frac s 2}\frac{dt}t+ \int_\theta^{+\infty}   \frac{dt}{t^{1+s/2}})\\ \leq& C' \|g\|_E \theta^{-s/2}.
 \end{split}\end{equation*} The lemma is proved.
 \end{proof}

 Let us recall the definition of Calder\'on's interpolation spaces  $[A_0,A_1]_\theta$ and $[A_0,A_1]^\theta$ \cite{Cal64}.   We assume that $A_0$ and $A_1$ are subspaces of $\mathcal{S}'$, so that $A_0\cap A_1$ and $A_0+A_1$ are well-defined.
 
 We begin with the definition of the first interpolate $[A_0,A_1]_\theta$.
 Let $\Omega$ be the open  complex strip $\Omega=\{z\in\mathbb{C}\ /\  0<\Re z<1\}$.   $\mathcal{F}(A_0,A_1)$ is the space of functions $F$  defined on the closed complex strip $\overline{\Omega}$ such that :
 \begin{enumerate}
 \item $F$ is continuous and bounded  from $\overline{\Omega}$ to $A_0+A_1$
 \item $F$ is analytic from $\Omega$ to $A_0+A_1$
 \item $t\mapsto F(it)$ is continuous from $\mathbb{R}$ to $A_0$, and $\lim_{\vert t\vert\rightarrow+\infty} \|F(it)\|_{A_0}=0$
  \item $t\mapsto F(1+it)$ is continuous from $\mathbb{R}$ to $A_1$, and $\lim_{\vert t\vert\rightarrow+\infty} \|F(1+it)\|_{A_0}=0$
 \end{enumerate}
 Then $$f\in [A_0,A_1]_\theta\Leftrightarrow \exists F\in\mathcal{F}(A_0,A_1) , f=F(\theta)$$ and
 $$ \|f\|_{[A_0,A_1]_\theta}=\inf_{f=F(\theta)} \max(\sup_{t\in\mathbb{R}}  \|F(it)\|_{A_0}, \sup_{t\in\mathbb{R}}  \|F(1+it)\|_{A_1}).$$
 
 Now, let us recall the definition of  the second interpolate $[A_0,A_1]^\theta$.   $\mathcal{G}(A_0,A_1)$ is the space of functions $G$  defined on the closed complex strip $\overline{\Omega}$ such that :
 \begin{enumerate}
 \item $\frac{1}{1+\vert z\vert} G$ is continuous and bounded  from $\overline{\Omega}$ to $A_0+A_1$
 \item $G$ is analytic from $\Omega$ to $A_0+A_1$
 \item $t\mapsto G(it)-G(0)$ is Lipschitz   from $\mathbb{R}$ to $A_0$ 
   \item $t\mapsto G(1+it)-G(1)$ is Lipschitz from $\mathbb{R}$ to $A_1$  \end{enumerate}
 Then $$f\in [A_0,A_1]^\theta\Leftrightarrow \exists G\in\mathcal{G}(A_0,A_1) , f=G'(\theta)$$ and
 $$ \|f\|_{[A_0,A_1]^\theta}=\inf_{f=G'(\theta)} \max(\sup_{t_1,t_2\in\mathbb{R}}  \|\frac{G(it_2)-G(it_1)}{t_2-t_1}\|_{A_0}, \sup_{t_1,t_2\in\mathbb{R}}  \|\frac{G(1+it_2)-G(1+it_1)}{t_2-t_1}\|_{A_1}).$$

Threeimportant properties of those complex interpolation functors are:
\begin{itemize}
\item[$\bullet$] the equivalence theorem: if $A_0$ (or $A_1$) is reflexive, then $[A_0,A_1]^\theta=[A_0,A_1]_\theta$ for $0<\theta<1$;
\item[$\bullet$] the duality theorem:  if $A_0\cap A_1$ is dense in $A_0$ and $A_1$, then $([A_0,A_1]_\theta)'=[A_0',A_1']^\theta$ for $0<\theta<1$.
\item[$\bullet$] the density theorem:  $A_0\cap A_1$ is dense in $[A_0,A_1]_\theta$ 
\end{itemize}

An easy classical example of interpolation concerns the Lebesgue spaces $L^p$ on a measured space $(X,\mu)$: $[L^{p_0}, L^{p_1}]_\theta=L^p$ with $1<p_0<+\infty$, $1<p_1<+\infty$, $0<\theta<1$ and $\frac 1 p= (1-\theta)\frac 1{p_0}+\theta \frac 1 {p_1}$. Indeed, if $f\in L^p$, we write $ f=F_\theta$ where $F_z(x)=\vert f(x)\vert^{(1-z)\frac p{p_0}+z\frac  p{p_1}} \frac{f(x)}{\vert f(x)\vert}$. If $p_0\leq p_1$, we have $\vert F_z(x)\vert\leq \vert f(x)\vert^{\frac p{p_0}}$   if $\vert f(x)\vert \geq 1$. and $\vert F_z(x)\vert\leq \vert f(x)\vert^{\frac p{p_1}}$ if $\vert f(x)\vert < 1$. By dominated convergence, this gives the continuity of $F$ from $\overline{\Omega}$ to $L^{p_0}+L^{p_1}$. For the holomorphy, we use the equivalence beetween (strong)  holomorphy and weak-* holomorphy; thus, it is enough to check that $z\in \Omega \mapsto \int F_z(x) g(x)\, d/mu$ is holomorph if $g\in L^{q_0}\cap L^{q_1}$, where $\frac 1 {q_i}+\frac 1{p_i}=1$. Thus, we obtain that $L^p\subset [L^{p_0}, L^{p_1}]_\theta$. As $$[L^{p_0},L^{p_1}]_\theta=[L^{p_0},L^{p_1}]^\theta =([L^{q_0},L^{q_1}]_\theta)'$$ and as $L^q$ is dense in $[L^{q_0},L^{q_1}]_\theta$ (where $\frac 1 q+\frac 1 p=1$), we obtain from the embedding $L^q\subset [L^{q_0}, L^{q_1}]_\theta$ that $[L^{p_0}, L^{p_1}]_\theta\subset L^p$.

A similar result holds for weighted Lebesgue spaces $L^p(w\, d\mu)$: $$[L^{p_0}(w_0\, d\mu), L^{p_1}(w_1\, d\mu)]_\theta=L^p(w\ d\mu)$$ with $1<p_0<+\infty$, $1<p_1<+\infty$, $0<\theta<1$ and $\frac 1 p= (1-\theta)\frac 1{p_0}+\theta \frac 1 {p_1}$ and $w=w_0^{1-\theta} w_1^\theta$. If $f\in L^p(w\ d\mu)$, one defines $$F_z(x)= \left( \frac{w(x)}{w_0(x)}\right)^{(1-z)\frac 1{p_0}}   \left( \frac{w(x)}{w_1(x)}\right)^{ z\frac 1{p_1}} \vert f(x)\vert^{(1-z)\frac p{p_0}+z\frac  p{p_1}} \frac{f(x)}{\vert f(x)\vert}.$$
We have $$\vert F_z(x)\vert\leq \max( \left(\frac{w(x) }{w_0(x)}\right)^{\frac 1{p_0}}\vert f(x)\vert^{\frac p{p_0}},  \left(\frac{w(x) }{w_1(x)}\right)^{\frac 1{p_1}}\vert f(x)\vert^{\frac p{p_1})}$$  The proof then is similar to the case of Lebesgue spaces.

If we want to interpolate Morrey spaces $M^{p_0,q_0}(\mathbb{R}^3)$ and $M^{p_1,q_1}(\mathbb{R}^3)$ and obtain a Morrrey space, then it is necessary to assume that $\frac{p_0}{q_0}=\frac{p_1}{q_1}$ \cite{Lem13, Lem14}. We then obtain:
$$ \left[ M^{p_0,q_0}, M^{p_1,q_1}\right]^\theta= M^{p,q}$$ when $1<p_0\leq q_0<+\infty$, $1<p_1\leq q_1<+\infty$, $\frac{p_0}{q_0}=\frac{p_1}{q_1}$, $0<\theta<1$, $\frac 1 p= (1-\theta)\frac 1{p_0}+\theta\frac 1{p_1}$ and $\frac 1 q=(1-\theta)\frac 1{q_0}+\theta\frac 1{q_1}$. As $M^{p_0,q_0}\cap M^{p_1,q_1}$ is not dense in $M^{p,q}$ and is dense in $ \left[ M^{p_0,q_0}, M^{p_1,q_1}\right]_\theta$, we can see that we must use the second interpolation functor. The embedding $  \left[ M^{p_0,q_0}, M^{p_1,q_1}\right]^\theta\subset M^{p,q}$ is obvious: for a ball $B$ with radius $r\leq 1$, we have that the map $f\mapsto f\mathds{1}_B$ is bounded from $M^{p_0,q_0}$ to $L^{p_0}$ with norm less or equal to $r^{3(\frac 1{p_0}-\frac 1{q_0})}$ and from $M^{p_1,q_1}$ to $L^{p_1}$ with norm less or equal to $r^{3(\frac 1{p_0}-\frac 1{q_0})}$, hence from  $  \left[ M^{p_0,q_0}, M^{p_1,q_1}\right]^\theta $ to $  \left[ L^{p_0}, L^{p_1}\right]^\theta$ with norm less or equal to $r^{3(\frac 1 p-\frac 1 q)}$. As  $\left[ L^{p_0}, L^{p_1}\right]^\theta=L^p$, we obtain the desired estimates.

If $f$ belongs to $M^{p,q}$, we define  $F_z(x)=\vert f(x)\vert^{(1-z)\frac p{p_0}+z\frac  p{p_1}} \frac{f(x)}{\vert f(x)\vert}$.  As $\vert F_z(x)\vert \leq \max( ( \vert f(x)\vert^{\frac p{p_0}}  \vert f(x)\vert^{\frac p{p_1}})$, 
we find that $z\mapsto F_z$ is bounded from $\overline{\Omega}$ to $M^{p_0,q_0}+M^{p_1,q_1}$ and holomorph on the open strip $\Omega$ (again by equivalence between analyticity and weak-* analyticity). 
But it is no longer continuous, and we cannot apply the first functor of Calder\'on. Instead, we follow Cwikel and Janson \cite{CwJ84} and define $G_z=\int_{1/2}^z F_w\, dw$. We may then apply the definition of the second functor and find that $f\in  \left[ M^{p_0,q_0}, M^{p_1,q_1}\right]^\theta$. Thus, $ \left[ M^{p_0,q_0}, M^{p_1,q_1}\right]^\theta= M^{p,q}$. 

 Now, we are going to describe complex interpolation of potential spaces on weighted Lebesgue spaces when varying both the regularity exponents and   the weights\footnote{This can be seen as a variation on Stein's interpolation theorem \cite{Ste56, CwJ84}.}:
 
 \begin{proposition}$\ $\\ Let $\theta\in (0,1)$, $s_0$, $s _1$ be real numbers, $1<p_0,p_1<+\infty$ and $s=(1-\theta)s_0+\theta s_1$ and $\frac 1 p=(1-\theta)\frac 1{p_0}+\theta \frac 1{p_1}$. Then, if  $w_0$ is a weight in the Muckenhoupt class $\mathcal{A}_{p_0}$ and $w_1$ is a weight in the Muckenhoupt class $\mathcal{A}_{p_1}$, 
 $$ (\Id-\Delta)^{-s} L^p(w_0^{1-\theta}w_1^\theta\, dx)= [(\Id-\Delta)^{-s_0} L^{p_0}(w_0\, dx), (\Id-\Delta)^{-s_1}L^{p_1}(w_1\, dx)]_\theta .$$
  \end{proposition}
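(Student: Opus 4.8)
The plan is to deduce the identity from a Stein-type interpolation of analytic families of operators, realizing both sides as the image of the weighted space $L^p(w_0^{1-\theta}w_1^\theta\,dx)$ under the single operator $(\Id-\Delta)^{-s}$. Set $B_j=L^{p_j}(w_j\,dx)$ and $A_j=(\Id-\Delta)^{-s_j}B_j$, so that $(\Id-\Delta)^{-s_j}\colon B_j\to A_j$ is a surjective isometry by the very definition of $A_j$, and let $T_z=(\Id-\Delta)^{-((1-z)s_0+zs_1)}$ for $z\in\overline\Omega$, where $\Omega=\{0<\Re z<1\}$. Then $T_\theta=(\Id-\Delta)^{-s}$, while on the boundary lines $T_{it}=(\Id-\Delta)^{-s_0}\circ(\Id-\Delta)^{-it(s_1-s_0)}$ and $T_{1+it}=(\Id-\Delta)^{-s_1}\circ(\Id-\Delta)^{-it(s_1-s_0)}$; thus $T_{it}$ maps $B_0$ to $A_0$ and $T_{1+it}$ maps $B_1$ to $A_1$, up to an imaginary power of $(\Id-\Delta)$. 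Once those maps are known to be bounded with at most polynomial growth in $|t|$, the interpolation principle will give that $T_\theta$ is bounded from $[B_0,B_1]_\theta=L^p(w_0^{1-\theta}w_1^\theta\,dx)$ (established earlier in this section) into $[A_0,A_1]_\theta$, and running the same argument with the inverse family will supply the reverse inclusion.

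The one analytical input I would isolate first is that, for $1<q<\infty$ and $v\in\mathcal{A}_q$, the imaginary powers $(\Id-\Delta)^{i\tau}$ are bounded on $L^q(v\,dx)$ with $\|(\Id-\Delta)^{i\tau}\|_{L^q(v)\to L^q(v)}\leq C(q,[v]_{\mathcal{A}_q})(1+|\tau|)^2$: the symbol $(1+|\xi|^2)^{i\tau}=e^{i\tau\log(1+|\xi|^2)}$ satisfies $|\xi|^{|\alpha|}|\partial^\alpha_\xi(1+|\xi|^2)^{i\tau}|\leq C_\alpha(1+|\tau|)^{|\alpha|}$ for $|\alpha|\leq 2$, hence is a Mikhlin--Hörmander multiplier with constant $O((1+|\tau|)^2)$, and the Mikhlin theorem in the $\mathcal{A}_q$-weighted setting applies. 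Since $(\Id-\Delta)^{i\tau}$ commutes with every power of $(\Id-\Delta)$, the same bound holds on each $A_j$, because $\|(\Id-\Delta)^{i\tau}f\|_{A_j}=\|(\Id-\Delta)^{i\tau}(\Id-\Delta)^{s_j}f\|_{B_j}$.

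For the interpolation step, given $\varphi\in\mathcal{S}$ I would pick, using that every element of Calderón's class $\mathcal{F}(B_0,B_1)$ can be approximated in the $\mathcal{F}$-norm by finite sums $\sum_k c_k(z)\psi_k$ with $c_k$ bounded and analytic on $\overline\Omega$ and $\psi_k\in\mathcal{S}$ (which uses the density of $\mathcal{S}$ in $B_0\cap B_1$), a function $F\in\mathcal{F}(B_0,B_1)$ of this form with $F(\theta)=\varphi$ and $\max(\sup_t\|F(it)\|_{B_0},\sup_t\|F(1+it)\|_{B_1})\leq\|\varphi\|_{[B_0,B_1]_\theta}+\varepsilon$, and then set
\[ G(z)=e^{\delta(z^2-\theta^2)}\,T_zF(z),\qquad \delta>0. \]
Because $F$ is $\mathcal{S}$-valued, $G(z)$ stays in $\mathcal{S}\subset A_0\cap A_1$ and $z\mapsto G(z)$ is analytic and continuous from $\overline\Omega$ into $\mathcal{S}$, hence into $A_0+A_1$. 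On $\Re z=0$, $\|G(it)\|_{A_0}\leq e^{-\delta(t^2+\theta^2)}\,C(1+|t|)^2\,\|F(it)\|_{B_0}$ by the previous paragraph, which is bounded in $t$ and tends to $0$ as $|t|\to\infty$; likewise on $\Re z=1$; and the Gaussian factor also absorbs the polynomial-in-$|t|$ growth of $\|T_zF(z)\|_{A_0}=|e^{\delta(z^2-\theta^2)}|^{-1}\|(\Id-\Delta)^{z(s_0-s_1)}F(z)\|_{B_0}$ caused by the order-shifting factor $(\Id-\Delta)^{z(s_0-s_1)}$ in the interior of the strip, so $G$ is bounded on $\overline\Omega$ into $A_0+A_1$. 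Hence $G\in\mathcal{F}(A_0,A_1)$, and since $G(\theta)=T_\theta\varphi=(\Id-\Delta)^{-s}\varphi$ we obtain $\|(\Id-\Delta)^{-s}\varphi\|_{[A_0,A_1]_\theta}\leq C(\|\varphi\|_{[B_0,B_1]_\theta}+\varepsilon)$; letting $\varepsilon\to0$ and using the density of $\mathcal{S}$ in $L^p(w_0^{1-\theta}w_1^\theta\,dx)$ shows $(\Id-\Delta)^{-s}$ maps $L^p(w_0^{1-\theta}w_1^\theta\,dx)$ boundedly into $[A_0,A_1]_\theta$. The identical argument for the inverse family $\widetilde T_z=(\Id-\Delta)^{(1-z)s_0+zs_1}$, bounded from $A_0$ to $B_0$ on $\Re z=0$ and from $A_1$ to $B_1$ on $\Re z=1$ with polynomial growth (now using boundedness of the imaginary powers on the $A_j$), shows $(\Id-\Delta)^{s}=\widetilde T_\theta$ is bounded from $[A_0,A_1]_\theta$ into $L^p(w_0^{1-\theta}w_1^\theta\,dx)$. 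As $(\Id-\Delta)^{-s}$ and $(\Id-\Delta)^{s}$ are mutually inverse bijections of $\mathcal{S}'$, the two boundedness statements combine to give the asserted equality, with equivalent norms.

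The main obstacle is not the interpolation bookkeeping but the quantitative control of $(\Id-\Delta)^{i\tau}$ on the weighted spaces $L^{p_j}(w_j\,dx)$: one must verify that the weighted-multiplier bound is of admissible (here polynomial) growth in $\tau$, which is precisely what allows the correction factor $e^{\delta(z^2-\theta^2)}$ to place $G$ in $\mathcal{F}(A_0,A_1)$. A secondary, purely technical point is the reduction to test functions $F$ with values in $\mathcal{S}$, which is what keeps the family $T_z$ — comfortably defined as an analytic family only on smooth functions — from ever having to act on the full sum space $B_0+B_1$, on which $T_z$ for $z$ in the interior of the strip would leave $A_0+A_1$.
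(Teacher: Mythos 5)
Your argument is correct in substance and rests on the same analytic pillar as the paper's proof, namely the boundedness, with at most polynomial growth in the imaginary exponent, of the imaginary powers $(\Id-\Delta)^{i\tau}$ on $L^{p_j}(w_j\,dx)$ for $w_j\in\mathcal{A}_{p_j}$ -- but the packaging is genuinely different. You run a Stein-type interpolation of the analytic family $T_z=(\Id-\Delta)^{-(1-z)s_0-zs_1}$ entirely inside Calder\'on's first functor, restricting to $\mathcal{S}$-valued special families (finite sums $c_k(z)\psi_k$, $\psi_k\in\mathcal{S}$, which requires the density of $\mathcal{S}$ in $B_0\cap B_1$ and, for the reverse inclusion, in $A_0\cap A_1$, together with a small correction term to keep the value at $\theta$ exact), damping the polynomial growth in $\Im z$ by $e^{\delta(z^2-\theta^2)}$, and closing by density of $\mathcal{S}$ and completeness of the interpolation space. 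The paper instead applies the heat-mollified family $e^{\epsilon\Delta}(\Id-\Delta)^{-(1-z)s_0-zs_1}$, damped by $\left(\frac{2-\theta}{2-z}\right)^4$, to the explicit Calder\'on power family $H_z$ attached to the given function; since $H_z$ is bounded but not continuous up to the boundary, the paper passes through the second functor $[\cdot,\cdot]^\theta$, uses duality and density of $\mathcal{S}$ in the preduals, takes a weak-* limit as $\epsilon\to 0$, and returns to $[\cdot,\cdot]_\theta$ by reflexivity. Your route buys a self-contained argument within the first functor, with no mollification, duality or reflexivity, at the price of the (standard, but worth stating) density lemmas for $\mathcal{S}$-valued test families; the paper's route avoids any density considerations for the families themselves and fits the same mold it already needed for Morrey spaces earlier in the section, where the intersection is not dense, at the price of the $[\cdot,\cdot]^\theta$/duality machinery. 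One small repair: the Mikhlin--H\"ormander condition with only two derivative bounds does not give boundedness on $L^q(v\,dx)$ for \emph{all} $v\in\mathcal{A}_q$; to cover every Muckenhoupt weight you should verify the symbol estimates up to order $4$, so that the kernel is a Calder\'on--Zygmund convolutor as in the paper, which yields a constant $O((1+|\tau|)^4)$ instead of $O((1+|\tau|)^2)$ -- still polynomial, so your Gaussian factor absorbs it and nothing else in your argument changes.
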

 
 \begin{proof}
 Let $f= (\Id-\Delta)^{-s} g$ where $g\in L^p(w\ dx)$  We define
 $$ H_z(x)= \left( \frac{w(x)}{w_0(x)}\right)^{(1-z)\frac 1{p_0}}   \left( \frac{w(x)}{w_1(x)}\right)^{ z\frac 1{p_1}} \vert f(x)\vert^{(1-z)\frac p{p_0}+z\frac  p{p_1}} \frac{f(x)}{\vert f(x)\vert}$$ and 
 $$  F_{z,\epsilon}(.)=\left(\frac  {2-\theta}{2-z}\right)^4 e^{\epsilon\Delta}(\Id-\Delta)^{-(1-z)s_0-zs_1}   H_z  .$$
 We first remark that, for $\epsilon>0$ fixed,  the operators $e^{\epsilon\Delta}(\Id-\Delta))^{-\tau}$ with $\tau\in [s_0,s_1]$ are equicontinuous from $L^{p_i}(w_i\, dx)$ to $(\Id-\Delta)^{-s_i}L^{p_i}(w_i\, dx)$    (it is enough to check that the norms of the convolutors $e^{\epsilon\Delta}(\Id-\Delta)^{s_i-\tau}$ in $\mathbb{K}$ are uniformly bounded. 
 
 Moreover, the operators 
 $\left(\frac  {2-\theta}{2-it}\right)^4  (\Id-\Delta)^{-it}$, $t\in\mathbb{R}$, are uniformly bounded on $L^{p_i}(w_i\, dx)$. Let us recall the definition of Calder\'on--Zygmund convolutors.
 A Calder\'on--Zygmund convolutor is a distribution $K\in \mathcal{S}'(\mathbb{R}^3)$ such that $\hat K\in L^\infty$ (so that convolution with $K$ is a bounded operator on $L^2$) and such that, when restricted to $\mathbb{R}^3\setminus\{0\}$, $K$ is defined by a locally Lipschitz function such that
 $\sup_{x\neq 0} \vert x\vert^3 \vert K(x)\vert + \vert x\vert^4 \vert \vec \nabla x\vert <+\infty$. The space ${\rm CZ}$  of   Calder\'on--Zygmund convolutors is normed by
 $$ \|K\|_{CZ}=\|\hat K\|_\infty + \sup_{x\neq 0} \vert x\vert^3 \vert K(x)\vert + \vert x\vert^4 \vert \vec \nabla x\vert.$$  If $1<p<+\infty$ and  $w\in \mathcal{A}_p$ and $K\in {\rm CZ}$, we have $\|f*K\|_{L^p(w\, dx)} \leq C_{w,p}  \|f\|_{L^p(w\, dx)} \|K\|_{\rm CZ}$. 
 Since we have
 $$ \|K\|_{CZ}\leq C   \sum_{\vert \alpha\vert \leq 4} \|\vert \xi\vert^{\vert \alpha\vert } \partial^\alpha_\xi \hat K\|_\infty,$$ it is clear that 
 $\left(\frac  {2-\theta}{2-it}\right)^4  (\Id-\Delta)^{-it}f=K_t*f$ with $\sup_{t\in\mathbb{R}} \|K_t\|_{\rm CZ}<+\infty$.

 We may apply the second interpolation functor and find that $e^{\epsilon\Delta}f=F_{\theta,\epsilon}\in [(\Id-\Delta)^{-s_0} L^{p_0}(w_0\, dx), (\Id-\Delta)^{-s_1}L^{p_1}(w_1\, dx)]_\theta$ if $g\in L^p(w\, dx)$. Moreover its norm is controlled independently from $\epsilon>0$ as, for $\alpha=0$ or $\alpha=1$, the functions $H_{\alpha+it}$ are bounded in $L^{p_\alpha}(w_\alpha\, dx)$,   the operators $\left(\frac  {2-\theta}{2-it}\right)^4  (\Id-\Delta)^{-it}$ are equicontinuous  on   $L^{p_\alpha}(w_\alpha\, dx)$  and the operators $^{\epsilon \Delta}$ are   equicontinuous  on   $L^{p_\alpha}(w_\alpha\, dx)$. One then writes $$F_{\alpha+it,\epsilon}=(\Id-\Delta)^{-(1-\alpha)s_0-\alpha s_1} \left(e^{\epsilon\Delta} \left(\frac  {2-\theta}{2-it}\right)^4  (\Id-\Delta)^{-it} H_{\alpha+it}\right).$$
 
 To conclude, we remark that $L^{p_i}(w_i \, dx)$ is the dual of $L^{q_i}(w^{-\frac{q_i}{p_i}}\, dx)$ and that $\mathcal{S}$ is dense in this predual.    Thus $e^{\epsilon\Delta}f$ is bounded in 
 $$[H^{s_0}_{L^{p_0}(w_0\, dx)},  H^{s_1}_{L^{p_1}(w_1\, dx)}]^\theta=([(\Id-\Delta)^{s_0} L^{q_0}(w_0^{-\frac{q_0}{p_0}}\, dx), (\Id-\Delta)^{s_1}L^{q_1}(w_1^{-\frac{q_1}{p_1}}\, dx)]_\theta)'$$ if $(\Id-\Delta)^{s}f\in L^p(w\, dx)$. As $\epsilon$ goes to $0$, $e^{\epsilon\Delta}f$ is weak-* convergent to $f$. Thus, 
 $$(\Id-\Delta)^{-s} L^p(w\, dx)\subset   [(\Id-\Delta)^{-s_0} L^{p_0}(w_0\, dx), (\Id-\Delta)^{-s_1}L^{p_1}(w_1\, dx)]^\theta$$
 and we can switch the second and the first interpolation functors as $H^{s_0}_{L^{p_0}(w_0\, dx)}$ is reflexive.

 Conversely, assume that $f\in  [(\Id-\Delta)^{-s_0} L^{p_0}(w_0\, dx), (\Id-\Delta)^{-s_1}L^{p_1}(w_1\, dx)]^\theta$ and pick $F\in \mathcal{F}((\Id-\Delta)^{-s_0} L^{p_0}(w_0\, dx), (\Id-\Delta)^{-s_1}L^{p_1}(w_1\, dx)))$ such that $ f=F(\theta)$. Define $$H_{z,\epsilon} = \left(\frac  {2-\theta}{2-z}\right)^4 e^{\epsilon\Delta}(\Id-\Delta)^{(1-z)s_0+zs_1}   F_z  .$$ We easily check that $H_{z,\epsilon}\in \mathcal{A}(L^{p_0}(w_0\, dx), L^{p_1}(w_1\, dx))$ with $H_{\theta,\epsilon}=e^{\epsilon\Delta} (\Id-\Delta)^s f$.
Thus, we find that $e^{\epsilon\Delta} (\Id-\Delta)^s f$ is bounded in $[L^{p_0}(w_0\, dx), L^{p_1}(w_1\, dx]_\theta=L^p(w\, dx)$, and finally $f\in (\Id-\Delta)^{-s} L^p(w\, dx)$.
 \end{proof}
 
 \begin{corollary}\label{embed}$\ $\\  Let $2<q<+\infty$ and $s<1-\frac 2 q$. Then there exists such that :
 \\ a) There exists  $\gamma>0$ and $2<r<+\infty$ such that $\gamma+\frac 3 r<1$  and $\theta\in (0,1)$ such that $$ B^{-s}_{q,\infty}\subset [L^2,H^{-\gamma}_r]_{\theta,\infty}\subset [L^2, B^{-\sigma}_{\infty,\infty}]_{\theta,\infty}.$$
b)  For $0\leq N<\frac 4 q$, there exists  $\gamma>0$ and $2<r<+\infty$ such that $\gamma+\frac 3 r<1$  and  and $\theta\in (0,1)$ such that $$  B^{-s}_{L^q(1+\vert x\vert^{-N}dx),\infty}\subset [L^2(\frac{dx}{(1+\vert x\vert)^2}), H^{-\gamma}_r]_{\theta,\infty}\subset [L^2(\frac{dx}{(1+\vert x\vert)^2}), B^{-\sigma}_{\infty,\infty}]_{\theta,\infty}.$$
 \end{corollary}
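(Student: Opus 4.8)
\emph{Plan.} The whole statement will be deduced from the Proposition of the previous section on complex interpolation of potential spaces on weighted Lebesgue spaces, together with the elementary Besov inclusions already recorded (for $0<s<\rho$, $B^{-s}_{E,\infty}\subset B^{-\rho}_{E,1}\subset H^{-\rho}_E$) and the standard embedding $[A_0,A_1]_\theta\subset[A_0,A_1]_{\theta,\infty}$ of a complex interpolate into the corresponding real one. I set $\sigma:=\gamma+\frac3r$ and will choose $\gamma>0$, $2<r<+\infty$ with $\gamma+\frac3r<1$, so that $\sigma\in(0,1)$. The second inclusion in each chain is then immediate: $H^{-\gamma}_r=H^{-\gamma}_{L^r}$ embeds continuously into $B^{-\sigma}_{\infty,\infty}$ (because $e^{t\Delta}$ sends $H^{-\gamma}_{L^r}$ into $L^\infty$ with norm $O(t^{-\sigma/2})$ for $0<t<1$, which is the heat characterization of $B^{-\sigma}_{\infty,\infty}$; equivalently, use $H^{-\gamma}_{L^r}\subset B^{-\gamma}_{L^r,\infty}$ followed by the Besov Sobolev embedding into $B^{-\gamma-3/r}_{\infty,\infty}$), and, all spaces being continuously embedded in $\mathcal{S}'$, the functor $[A_0,\cdot]_{\theta,\infty}$ is monotone, so $[L^2,H^{-\gamma}_r]_{\theta,\infty}\subset[L^2,B^{-\sigma}_{\infty,\infty}]_{\theta,\infty}$, and likewise with $L^2$ replaced by $L^2(\frac{dx}{(1+|x|)^2})$.

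For the first inclusion of (a) I would first reduce to $0<s<1-\frac2q$ (if $s\le0$, replace $s$ by some $s'\in(0,1-\frac2q)$ using $B^{-s}_{q,\infty}\subset B^{-s'}_{q,\infty}$), then choose $r>q$ large enough that, with $\eta:=\frac{1/2-1/q}{1/2-1/r}\in(0,1)$, one has $\eta(1-\frac3r)>s$ — possible since this tends to $1-\frac2q>s$ as $r\to+\infty$ — pick $\gamma$ in the nonempty interval $(s/\eta,\,1-\frac3r)$, and set $\theta:=\eta$. By the Proposition (trivial weights, $H$-regularities $0$ and $-\gamma$, Lebesgue exponents $2$ and $r$) one gets $[L^2,H^{-\gamma}_r]_\eta=H^{-\eta\gamma}_{L^p}$ where $\frac1p=(1-\eta)\frac12+\eta\frac1r=\frac1q$, i.e. $[L^2,H^{-\gamma}_r]_\eta=H^{-\eta\gamma}_{L^q}$; since $0<s<\eta\gamma$, $B^{-s}_{q,\infty}=B^{-s}_{L^q,\infty}\subset B^{-\eta\gamma}_{L^q,1}\subset H^{-\eta\gamma}_{L^q}=[L^2,H^{-\gamma}_r]_\eta\subset[L^2,H^{-\gamma}_r]_{\theta,\infty}$, which is (a).

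Part (b) I would run in exactly the same way, now with the Muckenhoupt weight $w(x)=(1+|x|)^{-2}\in\mathcal{A}_2(\mathbb{R}^3)$ (indeed $-3<-2<3$). Given $0\le N<\frac4q$ and $0<s<1-\frac2q$, I would pick $r>q$ large enough that both $\frac{2(1/q-1/r)}{1/2-1/r}\ge N$ and $\eta(1-\frac3r)>s$, with $\eta:=\frac{1/2-1/q}{1/2-1/r}$ (both attainable for large $r$, since the first quantity tends to $\frac4q>N$ and the second to $1-\frac2q>s$); noting $1-\eta=\frac{1/q-1/r}{1/2-1/r}$, the first condition is just $2(1-\eta)\ge N$. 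Pick $\gamma\in(s/\eta,\,1-\frac3r)$ and $\theta:=\eta$. By the Proposition (with $w\in\mathcal{A}_2$ in the first slot, trivial weight in the second), $[L^2(w\,dx),H^{-\gamma}_r]_\eta=H^{-\eta\gamma}_{L^q(w^{1-\eta}\,dx)}=H^{-\eta\gamma}_{L^q((1+|x|)^{-2(1-\eta)}\,dx)}$. Since $N\le2(1-\eta)$ we have $(1+|x|)^{-2(1-\eta)}\le(1+|x|)^{-N}$, hence $L^q((1+|x|)^{-N}\,dx)\hookrightarrow L^q((1+|x|)^{-2(1-\eta)}\,dx)$, and the generalized Besov scale being monotone in the underlying stable space, $B^{-s}_{L^q((1+|x|)^{-N}),\infty}\subset B^{-s}_{L^q((1+|x|)^{-2(1-\eta)}),\infty}$; then, just as in (a), $0<s<\eta\gamma$ gives $B^{-s}_{L^q((1+|x|)^{-2(1-\eta)}),\infty}\subset B^{-\eta\gamma}_{L^q((1+|x|)^{-2(1-\eta)}),1}\subset H^{-\eta\gamma}_{L^q((1+|x|)^{-2(1-\eta)}\,dx)}=[L^2(w\,dx),H^{-\gamma}_r]_\eta\subset[L^2(w\,dx),H^{-\gamma}_r]_{\theta,\infty}$.

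\emph{Main point.} Given the Proposition the computation is short, and the only thing requiring care is the arithmetic of the exponents: it is $\eta\gamma$, not $\gamma$, that must exceed $s$. Because $\eta<1$ (as $r>q$), one can let $\eta\gamma$ approach $1-\frac2q$ while keeping $\gamma+\frac3r<1$; this is exactly why the hypothesis is $s<1-\frac2q$ rather than the strictly stronger $s<1-\frac3q$ that a naive \emph{Sobolev-line} interpolation would impose, and it is precisely the spot addressed by the erratum in the footnote ($\gamma=\eta\rho$, not $\rho=\eta\gamma$). In (b) the matching of the weight at integrability exponent $q$ produces the constraint $N\le2(1-\eta)$, and since $2(1-\eta)<\frac4q$ (the value $\frac4q$ being reached only in the limit $r\to+\infty$), this is exactly the hypothesis $N<\frac4q$. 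The remaining verifications — the membership $(1+|x|)^{-2}\in\mathcal{A}_2$, that the various weighted Lebesgue spaces appearing are stable spaces, and that the quoted elementary Besov inclusions and the complex-into-real interpolation embedding hold — are routine.
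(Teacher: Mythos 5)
Your argument is correct and is essentially the paper's own proof: the same weighted interpolation Proposition combined with the elementary embeddings $B^{-s}_{E,\infty}\subset B^{-\eta\gamma}_{E,1}\subset H^{-\eta\gamma}_E$, $H^{-\gamma}_{L^r}\subset B^{-\gamma-\frac3r}_{\infty,\infty}$ and $[A_0,A_1]_\theta\subset[A_0,A_1]_{\theta,\infty}$, with $r$ taken large so that $\eta\gamma>s$ while $\gamma+\frac3r<1$ and (in (b)) $2(1-\eta)\geq N$, exactly as in the paper's choice $\theta\gamma=\sigma$, $M=N/(1-\theta)$. The only cosmetic difference is where the weight comparison is inserted in part (b): the paper interpolates against $L^2(\frac{dx}{(1+\vert x\vert)^M})$ with $M=\frac N{1-\theta}<2$ so the interpolated weight is exactly $(1+\vert x\vert)^{-N}$ and then enlarges the first slot to $L^2(\frac{dx}{(1+\vert x\vert)^2})$, whereas you fix the first slot at $L^2(\frac{dx}{(1+\vert x\vert)^2})$ and enlarge at the $L^q$ level via $L^q(\frac{dx}{(1+\vert x\vert)^N})\subset L^q(\frac{dx}{(1+\vert x\vert)^{2(1-\eta)}})$ — the same computation with the embedding applied at a different point of the chain.
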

 
 \begin{proof} If $s<\sigma<1-\frac 2 q$, we have $B^{-s}_{q,\infty}\subset H^{-\sigma}_{L^q}$  and   $B^{-s}_{L^q(1+\vert x\vert^{-N}dx),\infty}\subset   H^{-\sigma}_{L^q(1+\vert x\vert^{-N}dx)}$. Thus, if $r>q$, we have, for $\theta\in (0,1)$, $\gamma>\sigma$   such that $(1-\theta) \frac 1 2 +\theta \frac 1 r=\frac 1 q$ and $\theta \gamma=\sigma$, $$B^{-s}_{q,\infty}\subset [L^2, H^{-\gamma}_r]_\theta\subset [L_2, H^{-\gamma}_r]_{\theta,\infty}\subset [L_2, B^{-\gamma-\frac 3 r}_{\infty,\infty}]_{\theta,\infty}.$$ As $\gamma +\frac 3 r=  (1-\frac 2 r)\frac\sigma{1-\frac 2 q} +\frac 3 r=\frac\sigma{1-\frac 2 q} +O(\frac 1 r) $, we have $\gamma +\frac 3 r<1$ for $r$ large enough. Similarly, if $(1-\theta) M= N$ and $M<2$ (so that in particular $\frac 1{(1+\vert x\vert)^M}\in \mathcal{A}_2$), we have
\begin{equation*}\begin{split}B^{-s}_{L^q(\frac 1{(1+\vert x\vert)^N}dx),\infty}\subset &[L^2(\frac 1{(1+\vert x\vert)^M}dx), H^{-\gamma}_r]_\theta\\\subset &[L_2(\frac 1{(1+\vert x\vert)^2}dx), H^{-\gamma}_r]_{\theta,\infty}\subset [L_2(\frac 1{(1+\vert x\vert)^2}dx), B^{-\gamma-\frac 3 r}_{\infty,\infty}]_{\theta,\infty}.\end{split}\end{equation*}
 As $M=\frac N{1-\theta}=N \frac{\frac1 2-\frac 1 r}{\frac 1 q-\frac 1 r}= \frac{qN}2 +O(\frac 1 r)$, we have $M<2$ for $r$ large enough.
 \end{proof}
  \section{Mild solutions for the Navier--Stokes equation.}
  In this section, we develop some remarks on the solutions provided by Koch and Tataru's theorem  (Theorem \ref{theoKT} and Corollary \ref{kocht}).    
  
    Let $\vec u_0\in {\rm bmo}^{-1}_0$ with $\text{ div }\vec u_0=0$. If $\|e^{t\Delta}\vec u_0\|_{X_T}<\frac 1{4C_0}$, then the integral Navier--Stokes equations have a solution on $(0,T)$ such that $\|\vec u\|_{X_T}\leq 2\| e^{t\Delta}\vec u_0\|_{X_T}$. This solution is computed   through Picard iteration as the limit of $\vec U_n$, where $\vec U_0=e^{t\Delta}\vec u_0$ and $\vec U_{n+1}=e^{t\Delta}\vec u_0-B(\vec U_n,\vec U_n)$. In particular, we have, by induction,
  $$\|\vec U_{n+1}-\vec U_n\|_{X_T} \leq (4C_0 \|e^{t\Delta}\vec u_0\|_{X_T})^{n+1}  \|e^{t\Delta}\vec u_0\|_{X_T}$$ and $$ \|\vec U_n\|_{X_T}\leq   2\| e^{t\Delta}\vec u_0\|_{X_T}.$$
  
  It is easy to check that $\vec u$ is smooth: if $X_\alpha=L^\infty$ if $\alpha=0$ and $\dot B^{\alpha}_{\infty,\infty}$ if $\alpha>0$, we have
  $$ \|uv\|_{X_\alpha}\leq C_\alpha (\|u\|_\infty \|v\|_{X_\alpha}+\|v\|_\infty \|u\|_{X_\alpha})$$ and $\|\vec u\|_\infty\leq 2 \|e^{t\Delta}\vec u_0\|_{X_T} \frac 1{\sqrt t}$ for $0<t<T$ while
  $$ \vec u(t,.)=e^{\frac t 2 \Delta}\vec u(t/2,.)-\int_0^{t/2} e^{(\frac t 2-s)\Delta} \mathbb{P}\Div(\vec u(\frac t 2+s,.)\otimes \vec u(\frac t 2+s,.))\, ds$$ so that
  $$ \|\vec u(t,.)\|_{X_{(n+1)/2}}\leq C \frac 1{t^{1/4}} \|\vec u(t/2,.)\|_{X_{n/2}} +  C  \|e^{t\Delta}\vec u_0\|_{X_T}  \int_0^{t/2} \frac 1{(\frac t 2-s)^{3/4}} \frac 1{\sqrt s} \|\vec u(\frac t 2+s, .)\|_{X_{n/2}}\, ds$$ and, by induction on $n$,
  $$ \|\vec u( t, .)\|_{X_{n/2}}\leq C_n t^{-\frac 1 2-\frac n 4}.$$
  Thus, for $0<t<T$, $\vec u$ is smooth with respect to the space variable $x$. So is $\vec\nabla p$, by hypoellipticity of the Laplacian (as $\Delta p=-\sum_{i=1}^3\sum_{j=1}^3 \partial_i u_j \partial_j u_i$). Then we have smoothness with respect to the time variable by controlling  the time derivatives through the Navier--Stokes equations.

  \begin{proposition}\label{propweight}$\ $\\ Let $\vec u_0\in {\rm bmo}^{-1}_0$ with $\text{ div }\vec u_0=0$. Let $E\subset \mathcal{S}'$ be a stable space.
  If moreover $\vec u_0$ belongs to  $E$, then the small solution $\vec u$ to  the integral Navier--Stokes equations with initial value $\vec u_0$, i.e.  the solution on $(0,T)$ such that $\|\vec u\|_{X_T}\leq 2\| e^{t\Delta}\vec u_0\|_{X_T}$, satisfies $\sup_{0<t<T} \|\vec u(t,.)\|_E<+\infty$ and $\lim_{t\rightarrow 0} \|\vec u(t,.)-e^{t\Delta}\vec u_0\|_E=0$. In particular, if $\mathcal{S}$ is dense in $E$, then 
  $\lim_{t\rightarrow 0} \|\vec u(t,.) -\vec u_0\|_E=0$.
  
  Moreover, if  $E\subset \mathcal{S}'$ is the dual of a space $E_0$ where $\mathcal{S}$ is dense, $$\sup_{0<t<T} \sqrt t \|\vec\nabla\otimes \vec u\|_E<+\infty.$$\end{proposition}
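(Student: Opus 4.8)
The plan is to exploit the Picard iteration scheme already set up for the Koch--Tataru solution, together with the stability of $E$ under the two operations (multiplication by $L^\infty$ functions and convolution with kernels in $\mathbb{K}$) that define a stable space. The key observation is that the bilinear operator $B(\vec u,\vec v)$ from (\ref{bilin}) is a time-integral of operators $e^{(t-s)\Delta}\mathbb{P}\Div$ whose kernels are bounded in the convolutor norm $\|\cdot\|_{\mathbb{K}}$ by $C/\sqrt{t-s}$, so that for functions measured in $X_T$ one gets the estimate
\begin{equation*}
 \|B(\vec u,\vec v)(t,\cdot)\|_E \leq C \Big(\sup_{0<s<t}\sqrt s\,\|\vec u(s,\cdot)\|_\infty\Big)\int_0^t \frac{1}{\sqrt{t-s}}\,\frac{1}{\sqrt s}\,\|\vec v(s,\cdot)\|_E\, ds,
\end{equation*}
exactly as in the proof of Theorem \ref{solbesov}. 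Since $\int_0^t (t-s)^{-1/2}s^{-1/2}\,ds$ is a finite constant independent of $t$, this shows $\sup_{0<t<T}\|B(\vec u,\vec v)(t,\cdot)\|_E \leq C\,T\text{-independent constant}\cdot\|\vec u\|_{X_T}\,\sup_{0<s<T}\|\vec v(s,\cdot)\|_E$.

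First I would record that $\sup_{0<t<T}\|e^{t\Delta}\vec u_0\|_E\leq C\|\vec u_0\|_E$ because $e^{t\Delta}$ is convolution with a heat kernel lying in $\mathbb{K}_0$ with $L^1$ norm $1$; hence $\vec U_0$ satisfies the desired bound. Then, by induction on $n$ and the bilinear estimate above, each $\vec U_n$ satisfies $\sup_{0<t<T}\|\vec U_n(t,\cdot)\|_E<+\infty$, and — more importantly — setting $\epsilon=4C_0\|e^{t\Delta}\vec u_0\|_{X_T}<1$, the telescoping argument from the proof of Theorem \ref{solbesov} gives
\begin{equation*}
 \sup_{0<t<T}\|\vec U_{n+1}(t,\cdot)-\vec U_n(t,\cdot)\|_E \leq C\,\epsilon^{n+1}\big(\text{partial sums}\big),
\end{equation*}
so that $\sup_{0<t<T}\|\vec U_n(t,\cdot)\|_E$ is bounded by a convergent product $\prod_j(1+2C\epsilon^j)$ and $\vec U_n$ is Cauchy in $L^\infty((0,T),E)$. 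The limit is $\vec u$, giving $\sup_{0<t<T}\|\vec u(t,\cdot)\|_E<+\infty$. For the limit as $t\to 0$, I would simply note $\vec u(t,\cdot)-e^{t\Delta}\vec u_0 = -B(\vec u,\vec u)(t,\cdot)$, and the bilinear estimate gives $\|B(\vec u,\vec u)(t,\cdot)\|_E\leq C\|\vec u\|_{X_t}\sup_{0<s<t}\|\vec u(s,\cdot)\|_E$; since $\vec u_0\in{\rm bmo}^{-1}_0$ forces $\|\vec u\|_{X_t}=\|e^{s\Delta}\vec u_0\|_{X_t}+o(1)\to 0$ as $t\to 0$, we conclude $\|\vec u(t,\cdot)-e^{t\Delta}\vec u_0\|_E\to 0$. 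If $\mathcal{S}$ is dense in $E$ then $\|e^{t\Delta}\vec u_0-\vec u_0\|_E\to0$ by the standard approximation argument (truncate $\vec u_0$ by a Schwartz function, use boundedness of $e^{t\Delta}$ on $E$ uniformly in $t$, and strong continuity on $\mathcal{S}$), which combines with the previous step to give $\|\vec u(t,\cdot)-\vec u_0\|_E\to 0$.

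For the gradient bound, I would differentiate: $\vec\nabla\otimes\vec u = \vec\nabla\otimes e^{t\Delta}\vec u_0 - \vec\nabla\otimes B(\vec u,\vec u)$. The operator $\sqrt t\,\vec\nabla\otimes e^{t\Delta}$ is convolution with a kernel bounded in $\mathbb{K}$ uniformly in $t$, so $\sqrt t\,\|\vec\nabla\otimes e^{t\Delta}\vec u_0\|_E\leq C\|\vec u_0\|_E$; and $\sqrt t\,\vec\nabla\otimes B(\vec u,\vec u)$ involves the kernel of $\vec\nabla\otimes e^{(t-s)\Delta}\mathbb{P}\Div$, bounded in $\|\cdot\|_{\mathbb{K}}$ by $C/(t-s)$, so that $\sqrt t\,\|\vec\nabla\otimes B(\vec u,\vec u)(t,\cdot)\|_E\leq C\sqrt t\,\|\vec u\|_{X_T}\int_0^t (t-s)^{-1}s^{-1/2}\sup\|\vec u\|_E\,ds$ — but here $\int_0^t(t-s)^{-1}s^{-1/2}\,ds$ diverges, so the naive estimate fails and I must split the integral at $t/2$: on $(0,t/2)$ bound $(t-s)^{-1}\leq 2/t$ and integrate $s^{-1/2}$ to get $Ct^{-1/2}$, while on $(t/2,t)$ use $s^{-1/2}\leq C t^{-1/2}$ and move one factor of $\vec u$'s $L^\infty$-in-time weight. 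This is the only genuinely delicate point; the duality hypothesis $E=(E_0)'$ with $\mathcal{S}$ dense in $E_0$ is what lets me make sense of $\vec\nabla\otimes\vec u\in E$ at all (via weak-$*$ convergence of the difference quotients or of the smooth approximants $\vec U_n$), since a general stable space need not contain the distributional gradient of its elements. The main obstacle, then, is handling the endpoint $(t-s)^{-1}$ singularity in the gradient estimate and confirming that the weak-$*$ limit genuinely represents $\vec\nabla\otimes\vec u$; everything else is a routine iteration of the estimates already used for Theorem \ref{solbesov}.
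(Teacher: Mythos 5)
The first half of your proposal (the uniform bound on $\|\vec u(t,.)\|_E$ via the Picard iterates and the telescoping product, the identification $\vec u-e^{t\Delta}\vec u_0=-B(\vec u,\vec u)$ together with $\|\vec u\|_{X_t}\rightarrow 0$ coming from $\vec u_0\in{\rm bmo}^{-1}_0$, and the density argument) follows essentially the same route as the paper and is correct.

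The genuine gap is in the gradient estimate. You place both derivatives on the kernel, so that $\vec\nabla\otimes e^{(t-s)\Delta}\mathbb{P}\Div$ has convolutor norm of order $(t-s)^{-1}$, and you propose to repair the divergent integral $\int_0^t (t-s)^{-1}s^{-1/2}\,ds$ by splitting at $s=t/2$. But the non-integrable singularity sits at $s=t$, not at $s=0$: on $(t/2,t)$ the bound $s^{-1/2}\leq C t^{-1/2}$ and any $L^\infty$-in-time weight on $\vec u$ leave $\int_{t/2}^t\frac{ds}{t-s}=+\infty$ untouched, so the piece near $s=t$ does not close. The only way to tame that endpoint is to take one derivative off the kernel and put it on the solution, writing $\Div(\vec u\otimes\vec u)=\vec u\cdot\vec\nabla\vec u$, so that the operator is $e^{(t-s)\Delta}\mathbb{P}\partial_j$ with convolutor norm $C(t-s)^{-1/2}$ and the integrand carries $\|\vec u\|_\infty\,\|\vec\nabla\otimes\vec u\|_E$; but this makes the estimate self-referential, and you cannot run it directly on $\vec u$ because you do not yet know that $\sup_{0<s<T}\sqrt s\,\|\vec\nabla\otimes\vec u(s,.)\|_E$ is finite. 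The paper resolves this by working on the Picard iterates $\vec U_n$ (for which finiteness of $\sup_{0<s<T}\sqrt s\,\|\vec\nabla\otimes\vec U_n(s,.)\|_E$ is available by induction), writing the Duhamel formula from the intermediate time $(1-\eta)t$ instead of from $0$, so that the linear term costs $C(\eta t)^{-1/2}\|\vec U_{n+1}\|_{L^\infty((0,T),E)}$ while the bilinear term carries the factor $C_1\frac{\sqrt\eta}{1-\eta}$ with $C_1$ independent of $n$ and $\eta$; choosing $\eta$ small makes this factor less than $\frac14$ and gives the uniform bound $\sup_{0<s<T}\sqrt s\,\|\vec\nabla\otimes\vec U_n(s,.)\|_E\leq 2C_1\eta^{-1/2}$ for all $n$, after which the duality hypothesis on $E$ lets one pass to the weak-$*$ limit, exactly as you anticipated. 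Your sketch is missing both the shift of the base time (the absorption parameter $\eta$) and the uniform-in-$n$ induction; without them the delicate point you flagged remains unproved.
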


   \begin{proof} We have 
  $$ \|B(\vec u,\vec v)(t,.)\|_E\leq C_E \int_0^t \frac 1{\sqrt{t-s}} \min( \|\vec u\|_\infty \| \vec v\|_E, \|\vec u\|_E\|\vec v\|_\infty) \, ds.$$                                                                                                                                                                                                                                                                                                                                                                                                                                                                                                                                                                                                                                                                                                                                                                                                                                                                                                                                                                                                                                                                                                                                                                                                                                                                                                                                                                                                                                                                                                                                                                                                                                                                                                                                                                                                                                                                                                                                                                                                                                                                                                                                                                                                                                                                                                                                                                                                                                                                                                                                                                                                                                                                                                                                                                                                                                                                                                                                                                                                                                                                                                                                                                                                                                                                                                                                                                                                                                                                                                                                                                                                                                                                                                                                                                                                                                                                                                                                                                                                                                                                                                                                                                                                                                                                                                                                                                                                                                                                                                                                                                                                                                                                                                                                                                                                                                                                                                                                                                                                                                                                                                                                                                                                                                                                                                                                                                                                                                                                                                                                                                                                                                                                                                                                                                                                                                                                                                                                                                                                                                                                                                                                                                                                                                                                                                                                                                                                                                                                                                                                                                                                                                                                                                                                                                                                                                                                                                                                                                                                                                                                                                                                                                                                                                                                                                                                                                                                                                                                                                                                                                                                                                                                                                                                                                                                                                                                                                                                                                                                                                                                                                                                                                                                                                                                                                                                                                                                                                                                                                                                                                                                                                                                                                                                                                                                                                                                                                                                                                                                                                                                                                                                                                                                                                                                                                                                                                                                                                                                                                                                                                                                                                                                                                                                                                                                                                                                                                                                                                                                                                                                                                                                                                                                                                                                                                                                                                                                                                                                                                                                                                                                                                                                                                                                                                                                                                                                                                                                                                                                                                                                                                                                                                                                                                                                                                                                                                                                                                                                                                                                                                                                                                                                                                                                                                                                                                                                                                                                                                                                                                                                                                                                                                                                                                                                                                                                                                                                                                                                                                                                                                                                                                                                                                                                                                                                                                                                                                                                                                                                                                                                                                                                                                                                                                                                                                                                                                                                                                                                                                                                                                                                                                                                                                                                                                                                                                                                                                                                                                                                                                                                                                                                                                                                                                                                                                                                                                                                                                                                                                                                                                                                                                                                                                   By induction we have $\vec U_n \in L^\infty((0,T),E)$ with, for $n\geq 0$ [and $\vec U_{-1}=0$]
\begin{equation*}\begin{split} &\|\vec U_{n+1}(t,.)-\vec U_n(t,.)\|_E\\\leq& C \int_0^t \frac 1{\sqrt{t-s}} \frac 1{ \sqrt s }\sqrt s \|\vec U_n(s,.)-\vec U_{n-1}(s,.)\|_\infty (\|\vec U_n(s,.)\|_E+\|\vec U_{n-1}(s,.)\|_E)\, ds
\\\leq& C' (4C_0 \|e^{t\Delta}\vec u_0\|_{X_T})^n  \sum_{k=0}^n \|\vec U_k-\vec U_{k-1}\|_{L^\infty((0,T), E)}.
\end{split}\end{equation*}
Thus, we have
$$\sum_{k=0}^{+\infty} \|\vec U_k-\vec U_{k-1}\|_{L^\infty((0,T), E)} \leq \|\vec U_0\|_{L^\infty((0,T), E)} \prod_{n=0}^\infty (1+  C (4C_0 \|e^{t\Delta}\vec u_0\|_{X_T})^n).  $$
Thus, $\sup_{0<t<T} \|\vec u(t,.)\|_E<+\infty$.

We have  $\sup_{t>0} \sqrt t \|\vec\nabla\otimes \vec U_0\|_E<+\infty$. We will  show by induction that $\sup_{t>0} \sqrt t \|\vec\nabla\otimes \vec U_n\|_E<+\infty$. Indeed, for $\eta\in (0,1)$ and $0<t<T$, we have
 $$ \vec U_{n+1}(t,.)=e^{\eta t \Delta}\vec U_{n+1}((1-\eta)t,.)-\int_0^{\eta t} e^{( \eta t-s)\Delta} \mathbb{P}\Div(\vec U_n( (1-\eta)t+s,.)\otimes \vec U_n((1-\eta)t+s,.))\, ds$$ 
and, 
 since  $\Div (\vec u\otimes\vec v)=\vec u\cdot\vec\nabla \vec v$, 
 $$\partial_j \vec U_{n+1}(t,.)=e^{\eta t \Delta}\vec U_{n+1}((1-\eta)t,.)-\int_0^{\eta t} e^{( \eta t-s)\Delta} \mathbb{P}\partial_j(\vec U_n( (1-\eta)t+s,.)\cdot\vec\nabla \vec U_n((1-\eta)t+s,.))\, ds.$$ 
 This gives
 \begin{equation*}\begin{split} & \|\vec \nabla  \vec U_{n+1}(t,.)\|_E
 \\\leq & C \frac 1{\sqrt {\eta t}} \|\vec U_{n+1}\|_{L^\infty((0,T),E)} \\&+ C \int_0^{\eta t} \frac 1{\sqrt{\eta t-s}} \frac 1{(1-\eta)t+s}\, ds   \sup_{0<s<T} \sqrt s \|\vec\nabla \otimes \vec U_n(s,.)\|_E \sqrt s \| \vec U_{n}(s,.)\|_\infty
  \\\leq & C_1 \frac 1{\sqrt {\eta t}}  + C_1 \frac{\sqrt \eta}{1-\eta}   \frac 1{\sqrt t}  \sup_{0<s<T} \sqrt s \|\vec\nabla \otimes \vec U_n(s,.)\|_E
 \end{split}\end{equation*} where $C_1$ does not depend on $n$ nor on $\eta$. For $\eta$ small enough, we have $C_1 \frac{\sqrt \eta}{1-\eta} <\frac 1 4$ and  $\sup_{0<s<T} \sqrt s \|\vec\nabla \otimes \vec U_0(s,.)\|_E\leq  2C_1\frac 1{\sqrt \eta}$. By induction, we get   $\sup_{0<s<T} \sqrt s \|\vec\nabla \otimes \vec U_n(s,.)\|_E\leq  2C_1\frac 1{\sqrt \eta}$ for every $n\in\mathbb{N}$. If $E\subset \mathcal{S}'$ is the dual of a space $E_0$ where $\mathcal{S}$ is dense, we conclude that $\sup_{0<s<T} \sqrt s \|\vec\nabla \otimes \vec u(s,.)\|_E<+\infty$.
 \end{proof}

  \begin{proposition}\label{propenerg}$\ $\\ Let $\vec u_0\in {\rm bmo}^{-1}_0$ with $\text{ div }\vec u_0=0$. Let $w=\frac 1{(1+\vert x\vert)^N}$ where $0\leq N<3$.  
  If moreover $\vec u_0$ belongs to  $L^2(w\, dx))$, then the small solution $\vec u$ to  the integral Navier--Stokes equations with initial value $\vec u_0$, i.e.  the solution on $(0,T)$ such that $\|\vec u\|_{X_T}\leq 2\| e^{t\Delta}\vec u_0\|_{X_T}$, satisfies $\vec u\in L^\infty((0,T),L^2(w\, dx))$ and $\vec\nabla\otimes\vec u\in L^2((0,T), L^2(w\, dx))$\end{proposition}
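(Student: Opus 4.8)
The plan is to run a weighted energy estimate for the mild solution. First I would replace $w$ by the equivalent smooth weight $(1+\vert x\vert^2)^{-N/2}$, for which $\vert\vec\nabla w\vert+\vert\Delta w\vert\leq C_N\, w$ everywhere on $\mathbb{R}^3$, and which belongs to the Muckenhoupt class $\mathcal{A}_2$ exactly because $N<3$. Then $E=L^2(w\, dx)$ is a stable space, it is the dual of $L^2(w^{-1}\, dx)$ in which $\mathcal S$ is dense, and Proposition \ref{propweight} already provides $\vec u\in L^\infty((0,T),L^2(w\, dx))$, $\sup_{0<t<T}\sqrt t\,\|\vec\nabla\otimes\vec u(t,.)\|_{L^2(w)}<+\infty$ (so $\vec\nabla\otimes\vec u(t,.)\in L^2(w\, dx)$ for every $t\in(0,T)$), and $\lim_{t\to 0}\|\vec u(t,.)-\vec u_0\|_{L^2(w)}=0$ (since $\mathcal S$ is dense in $L^2(w\, dx)$). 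Hence only the square-integrability in time of $\vec\nabla\otimes\vec u$ with values in $L^2(w\, dx)$ remains to be proved.

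Since $\vec u$ is smooth on $(0,T)\times\mathbb{R}^3$ (as recalled before Proposition \ref{propweight}) and there solves the Navier--Stokes system classically with pressure $p=\sum_{i,j}R_iR_j(u_iu_j)$ (the $R_i$ being the Riesz transforms), I would multiply the momentum equation by $2\vec u\, w$, integrate over $\mathbb{R}^3$ and use ${\rm div}\,\vec u=0$; the absence of boundary terms at infinity follows from a cut-off at scale $R\to+\infty$, the errors carrying one derivative of the cut-off being $O\big(\tfrac1R\int_{R<\vert x\vert<2R}(\vert\vec u\vert^2+\vert\vec\nabla\otimes\vec u\vert^2+\vert p\vert^2)\, w\, dx\big)\to 0$, because at fixed $t$ each of $\vec u$, $\vec\nabla\otimes\vec u$, $p$ lies in $L^2(w\, dx)$. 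Writing $g(t)=\int\vert\vec u(t,x)\vert^2\, w(x)\, dx$, this yields, for $t\in(0,T)$,
\begin{equation*} g'(t)+2\int\vert\vec\nabla\otimes\vec u\vert^2\, w\, dx=\int\vert\vec u\vert^2\,\Delta w\, dx+\int\vert\vec u\vert^2\,\vec u\cdot\vec\nabla w\, dx+2\int p\,\vec u\cdot\vec\nabla w\, dx.\end{equation*}

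It remains to bound the three right-hand terms by an integrable-in-$t$ multiple of $g(t)$. The first is, in absolute value, $\leq C_N\, g(t)$ since $\vert\Delta w\vert\leq C_N\, w$. The second is $\leq C_N\int\vert\vec u\vert^3\, w\, dx\leq C_N\,\|\vec u(t,.)\|_\infty\, g(t)$, which by the Koch--Tataru bound $\sqrt t\,\|\vec u(t,.)\|_\infty\leq 2\|e^{t\Delta}\vec u_0\|_{X_T}$ is $\leq \tfrac{C}{\sqrt t}\, g(t)$. For the pressure term, Cauchy--Schwarz for the measure $w\, dx$ together with $\vert\vec\nabla w\vert\leq C_N\, w$ gives a bound $C_N\int\vert p\vert\,\vert\vec u\vert\, w\, dx\leq C_N\,\|p(t,.)\|_{L^2(w)}\, g(t)^{1/2}$, and since every $R_iR_j$ is a Calder\'on--Zygmund convolutor and $w\in\mathcal{A}_2$ one has $\|p(t,.)\|_{L^2(w)}\leq C\sum_{i,j}\|u_iu_j\|_{L^2(w)}\leq C\,\|\vec u(t,.)\|_\infty\, g(t)^{1/2}$, so this term too is $\leq \tfrac{C}{\sqrt t}\, g(t)$. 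Absorbing the first term on $(0,T)$ into $C_N\sqrt T\, t^{-1/2}g(t)$ we obtain $g'(t)\leq \tfrac{C}{\sqrt t}\, g(t)$; since $g$ is bounded on $(0,T)$ and extends continuously to $t=0$ with $g(0)=\|\vec u_0\|_{L^2(w)}^2$, integrating the identity over $(0,t)$ and using $\int_0^T g(s)\, s^{-1/2}\, ds\leq 2\sqrt T\,\|g\|_{L^\infty}<+\infty$ gives $\int_0^T\int\vert\vec\nabla\otimes\vec u\vert^2\, w\, dx\, dt<+\infty$, which is the asserted conclusion. The two points needing care are the weighted control of the pressure — handled by the $\mathcal{A}_2$-boundedness of Riesz transforms already invoked in Section~3 — and checking that every forcing term carries only a $t^{-1/2}$ (integrable), and not a $t^{-1}$, singularity, which is exactly what lets Gr\"onwall and the final integration close.
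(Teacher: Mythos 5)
Your proof is correct and follows essentially the same route as the paper: invoke Proposition \ref{propweight} for the bound in $L^\infty((0,T),L^2(w\,dx))$ and smoothness, then run a cut-off weighted energy estimate in which $\vert\vec\nabla w\vert+\vert\Delta w\vert\leq C_N w$, the pressure is controlled in $L^2(w\,dx)$ via the $\mathcal{A}_2$-boundedness of the Riesz transforms, and the nonlinear and pressure terms carry only an integrable $t^{-1/2}$ singularity thanks to the Koch--Tataru bound $\sqrt t\,\|\vec u(t,.)\|_\infty\lesssim\|e^{t\Delta}\vec u_0\|_{X_T}$. The paper simply integrates the pointwise energy balance against $\phi_R$ over $(t_0,t)$ and bounds the right-hand side directly rather than phrasing it as a differential inequality for $g(t)$, but this is the same estimate.
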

  \begin{proof} Let $\phi_R=\theta(\frac x R) \frac 1 {(1+\vert x\vert)^2)^{N/2}}$ where $\theta\in\mathcal{D}$ is equal to $1$ on a neighborhood of $0$. We know that $\vec u$ is smooth, so that, for $0<t_0\leq t<T$, \begin{equation*}  \partial_t(\vert\vec u\vert^2)+ 2 \vert\vec\nabla\otimes\vec u\vert^2=\Delta(\vert\vec u\vert^2)- \Div((2p+\vert\vec u\vert^2)\vec u)\end{equation*}
and thus
 \begin{equation*}\begin{split}&\int \phi_R(x)\vert \vec u(t,x)\vert^2\, dx+2 \int_{t_0}^t \int \phi_R(x) \vert\vec\nabla\otimes\vec u(s,x)\vert^2\, dx\, ds \\=& \int \phi_R(x) \vert\vec u(t_0, x)\vert^2\, dx
 +\int_{t_0}^t \int  \Delta(\phi_R(x))  \vert \vec u(t,x)\vert^2\, dx\, ds\\& + \int_{t_0}^t \int (2p+\vert \vec u\vert^2) \vec u\cdot\vec\nabla (\phi_R(x))\, dx\, ds.
\end{split} \end{equation*}
  We have, for $\vert \alpha\vert\leq 2$, $\vert \partial^\alpha (\phi_R)\vert \leq C w$. On the other hand, we know that $\vec u\in L^\infty(L^2(w\, dx))$, that $\sqrt t u_iu_j\in L^\infty(L^2(w\, dx))$, and thus $\sqrt t(2p+\vert \vec u\vert^2)\in L^\infty(L^2(w\, dx))$ (as $w\in\mathcal{A}_2$ and $p=-\sum_{1\leq i\leq 3}\sum_{j=1}^3 \frac{\partial_i\partial_j}\Delta(u_iu_j)$), thus we get that
  \begin{equation*}\begin{split}&\int \phi_R(x)\vert \vec u(t,x)\vert^2\, dx+2 \int_{t_0}^t \int \phi_R(x) \vert\vec\nabla\otimes\vec u(s,x)\vert^2\, dx\, ds \\\leq &C \sup_{0<s<T} \int  \vert\vec u(s, x)\vert^2 \, w(x)\, dx
 +C\int_{0}^T \int    \vert \vec u(t,x)\vert^2 \, w(x)\, dx\, ds\\& + \int_{0}^T \int \sqrt s  \left\vert 2p+\vert \vec u\vert^2\right\vert  \, \vert \vec u\vert\, w(x)\, dx\, \frac{ds}{\sqrt s} <+\infty.
\end{split} \end{equation*}
We then let $R$ go to $+\infty$ and $t_0$ go to $0$.   \end{proof}

\section{Barker's stability theorem}

In this section, we extend a lemma of Barker on Leray weak solutions with initial values in $L^2\cap [L^2,\dot B^{-\delta}_{\infty,\infty}]_{\theta,\infty}$ (for some $\delta<1 $ and $\theta\in (0,1)$) to the case of some solutions 
 with initial values in  $L^2(w\, dx)\cap [L^2(w\, dx), H^{-\gamma}_r]_{\theta,\infty}$ where $w=\frac1{(1+\vert x\vert)^N}$ and $0\leq N\leq 2$, and $\gamma+\frac 3 r<1$.
 
 \begin{definition}$\ $\\ A weighted Leray weak solution for the Navier--Stokes equations with divergence-free  initial value $\vec u_0\in L^2(w\, dx)$, where $w=\frac1{(1+\vert x\vert)^N}$ and $0\leq N\leq 2$, is a divergence-free vector field $\vec u$ defined on $(0,T)\times\mathbb{R}^3$ such that
 \begin{itemize}
 \item[$\bullet$]  $\vec u\in L^\infty((0,T),L^2(w\, dx))$ and $\vec\nabla\otimes\vec u\in L^2((0,T), L^2(w\, dx))$
 \item[$\bullet$] there exists $p\in \mathcal{D}'((0,T)\times\mathbb{R}^3$ such that $$\partial_t\vec u=\Delta\vec u-\vec u\cdot\vec\nabla \vec u-\vec\nabla p$$
 \item[$\bullet$] $\lim_{t\rightarrow 0} \|\vec u(t,.)-\vec u_0\|_{L^2(w\, dx)}=0$
 \item[$\bullet$] $\vec u$ fulfills the weighted Leray inequality: for $0<t<T$,
 \begin{equation*}\begin{split}
 \int \vert \vec u(t,x)\vert^2\, w(x)\, dx &+2 \int_0^t \int \vert \vec \nabla\otimes\vec u(s,x)\vert^2\, w(x)\, dx\, ds
 \\ \leq   \int \vert \vec u_0(t,x)\vert^2&\, w(x)\, dx -2\sum_{i=1}^3 \int_0^t\int \partial_iw(s,x) \vec u(s,x)\cdot\partial_i\vec u(s,x)\, dx\, ds\\&+\int_0^t\int (\vert \vec u(s,x)\vert^2+2 p(s,x)) \vec u(s,x)\cdot\vec\nabla w(x)\, dx\, ds
 \end{split}\end{equation*}
\end{itemize} 
 \end{definition}
 
 The Navier--Stokes problem in $L^2(w\, dx)$ has recently been studied by Bradshaw, Kukavica  and Tsai \cite{BKT}, and Fern\'andez-Dalgo and Lemari\'e-Rieusset \cite{FLR20}. As $\vert\vec\nabla w\vert\leq N w$, we find that $\sqrt w \vec u\in L^2((0,T), H^1)$. In particular, we have $w u_i u_j\in L^{4}((0,T),L^{6/5})$. The pressure $p$ is determined by the equation $\Delta p=-\sum_{i=1}^3\sum_{j=1}^3 u_iu_j$ (see \cite{FLR21}) and, as $w^{6/5}\in\mathcal{A}_{6/5}$, we have $p\in L^4((0,T),L^{6/5}(w^{6/5}\, dx))$. As $\vert\vec\nabla w\vert\leq N w^{3/2}$, we see that the right-hand side of the weighted Leray inequality is well-defined.As in the case of Leray solutions, the strong continuity at $t=0$ of $t\in [0,T)\mapsto \vec u(t,.)\in L^2(w\, dx)$ (which is only weakly continuous for $t>0$) is a consequence of the weighted Leray inequality.
 
 \begin{theorem}\label{stb}$\ $\\ Let $\vec u_0$ be a divergence-free vector field such that $\vec u_0\in L^2(w\, dx)$, where $w=\frac1{(1+\vert x\vert)^N}$ and $0\leq N\leq 2$. Let $\vec u_1$,  $\vec u_2$ be  two weighted Leray weak solutions for the Navier--Stokes equations with  initial value $\vec u_0$.  If moreover $\vec u_0$ belongs to $[L^2(w\, dx), H^{-\gamma}_r]_{\theta,\infty}$ for some $\gamma>0$, $2<r<+\infty$ with $\gamma+\frac 3 r<1$ and $\theta\in (0,1)$, then  there exists $T_0>0$, $C\geq 0$  and $\eta>0$ such that, for $0\leq t\leq T_0$, 
 $$ \|\vec u_1(t,.)-\vec u_2(t,.)\|_{L^2(w\, dx)}\leq C t^\eta.$$
 \end{theorem}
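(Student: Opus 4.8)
The plan is to compare the two weighted Leray solutions $\vec u_1,\vec u_2$ to the same mild solution $\vec w$ associated to a nearby, smoother initial datum, following Barker's scheme but carried out entirely in the weighted space $L^2(w\,dx)$ rather than in $L^2$. First I would use the hypothesis $\vec u_0\in[L^2(w\,dx),H^{-\gamma}_r]_{\theta,\infty}$ to split, for each $\epsilon>0$, $\vec u_0=\vec u_{0,\epsilon}^{\flat}+\vec u_{0,\epsilon}^{\sharp}$ with $\|\vec u_{0,\epsilon}^{\flat}\|_{L^2(w\,dx)}\le C\epsilon^{\theta}$ and $\|\vec u_{0,\epsilon}^{\sharp}\|_{H^{-\gamma}_r}\le C\epsilon^{\theta-1}$ (the standard $K$-functional description of the real interpolation space). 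Since $H^{-\gamma}_r\subset B^{-\gamma}_{r,\infty}\subset B^{-\gamma-3/r}_{\infty,\infty}$ with $\gamma+\tfrac3r<1$, the datum $\vec u_{0,\epsilon}^{\sharp}$ lies in a subcritical Besov space, and by Corollary~\ref{kocht} together with Theorem~\ref{solbesov}/Proposition~\ref{propweight} it generates a mild solution $\vec w_\epsilon$ on a common interval $(0,T_1)$ (with $T_1$ depending on $\|\vec u_0\|_{\mathrm{bmo}^{-1}}$ but not on $\epsilon$, because only a small-time smallness of $\|e^{t\Delta}\vec u_0\|_{X_T}$ is needed, and $\vec u_0\in{\rm bmo}^{-1}_0$ via the embedding) satisfying the Koch--Tataru bound $\|\vec w_\epsilon\|_{X_{T_1}}\le \tfrac1{2C_0}$, the pointwise-in-time bound $\sup_{0<t<T_1} t^{(\gamma+3/r)/2}\|\vec w_\epsilon\|_{L^\infty}\le C\epsilon^{\theta-1}$, and — by Proposition~\ref{propweight} applied with $E=L^2(w\,dx)$ — the energy-type controls $\vec w_\epsilon\in L^\infty((0,T_1),L^2(w\,dx))$ and $\sqrt t\,\vec\nabla\otimes\vec w_\epsilon\in L^\infty((0,T_1),L^2(w\,dx))$.

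**Key steps.** Set $\vec v_j=\vec u_j-\vec w_\epsilon$ for $j=1,2$. Each $\vec v_j$ satisfies $\partial_t\vec v_j=\Delta\vec v_j-\vec\nabla q_j-\Div(\vec v_j\otimes\vec v_j)-\Div(\vec v_j\otimes\vec w_\epsilon)-\Div(\vec w_\epsilon\otimes\vec v_j)$ with initial datum $\vec u_{0,\epsilon}^{\flat}$. The plan is to test this against $\vec v_j w$ and use the weighted Leray inequality for $\vec u_j$ together with the (exact) energy identity for the smooth solution $\vec w_\epsilon$ — this is precisely why ``suitable''/weighted-Leray solutions are needed, so that the defect measure enters with the right sign. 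This yields, writing $y_j(t)=\int|\vec v_j|^2 w\,dx$,
\begin{equation*}
y_j(t)+\int_0^t\!\!\int|\vec\nabla\otimes\vec v_j|^2 w\,dx\,ds\le y_j(0)+\int_0^t\!\!\int\Big(|\vec v_j|^2+|\vec v_j||\vec w_\epsilon|+|\vec v_j||q_j|\Big)\big(|\vec\nabla w|+|\Delta w|\big)\,dx\,ds+\int_0^t\!\!\int|\vec w_\epsilon|\,|\vec v_j|\,|\vec\nabla\otimes\vec v_j|\,w\,dx\,ds .
\end{equation*}
The commutator terms coming from $\vec\nabla w$, $\Delta w$ (bounded by $Cw$, resp.\ $Cw^{3/2}$) are lower-order and absorbed exactly as in Proposition~\ref{propenerg}; the genuinely nonlinear coupling term is $\int|\vec w_\epsilon||\vec v_j||\vec\nabla\otimes\vec v_j|w\,dx$. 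Here I would bound $|\vec w_\epsilon(s,\cdot)|\le C\epsilon^{\theta-1} s^{-(\gamma+3/r)/2}$ in $L^\infty$, interpolate $\||\vec v_j|\sqrt w\|_{L^2}$-norms à la Gagliardo--Nirenberg against $\|\vec\nabla(\vec v_j\sqrt w)\|_{L^2}$ (using $|\vec\nabla\sqrt w|\le C\sqrt w$ to pass between $\vec\nabla\otimes\vec v_j$ weighted and $\vec\nabla(\vec v_j\sqrt w)$), and apply Young's inequality to absorb the full gradient term, leaving a contribution of the form $C\epsilon^{2(\theta-1)}\int_0^t s^{-(\gamma+3/r)}\,y_j(s)\,ds$; since $\gamma+\tfrac3r<1$ this kernel is integrable near $0$. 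A Grönwall argument then gives $y_j(t)\le C\epsilon^{2\theta}\exp\big(C\epsilon^{2(\theta-1)}t^{1-\gamma-3/r}\big)$ on $(0,T_1)$.

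**Conclusion and main obstacle.** Finally, by the triangle inequality $\|\vec u_1(t,\cdot)-\vec u_2(t,\cdot)\|_{L^2(w\,dx)}\le \sqrt{y_1(t)}+\sqrt{y_2(t)}\le C\epsilon^{\theta}\exp\big(C\epsilon^{2(\theta-1)}t^{1-\gamma-3/r}\big)$. Choosing $\epsilon$ as an appropriate small power of $t$ — say $\epsilon^{2(1-\theta)}t^{1-\gamma-3/r}=1$, i.e.\ $\epsilon=t^{(1-\gamma-3/r)/(2(1-\theta))}$ — makes the exponential a harmless constant and yields $\|\vec u_1(t,\cdot)-\vec u_2(t,\cdot)\|_{L^2(w\,dx)}\le C t^{\eta}$ with $\eta=\tfrac{\theta(1-\gamma-3/r)}{2(1-\theta)}>0$, valid on some $(0,T_0)$ with $T_0\le T_1$ chosen so that the selected $\epsilon$ stays in the admissible range, and the bound extends to $t=0$ trivially since $\vec u_1(0,\cdot)=\vec u_2(0,\cdot)$. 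I expect the main obstacle to be the rigorous justification of the weighted energy inequality for $\vec v_j$: one must combine the \emph{inequality} available for the rough solutions $\vec u_j$ with the \emph{identity} for $\vec w_\epsilon$ and handle the cross terms $\int \vec u_j\cdot\vec w_\epsilon\, w\,dx$ and the pressure cross terms, making sure every integral is finite (this is where the integrability statements $p\in L^4((0,T),L^{6/5}(w^{6/5}dx))$, $wu_iu_j\in L^4 L^{6/5}$, and the corresponding bounds for $\vec w_\epsilon$ from Proposition~\ref{propweight} are essential) and that the weight commutators do not destroy the structure. The second delicate point is tracking the $\epsilon$-dependence through the Grönwall estimate so that the final optimization over $\epsilon$ produces a genuine positive power $t^\eta$; this forces the strict inequality $\gamma+\tfrac3r<1$, which is exactly what Corollary~\ref{embed} guarantees from the hypotheses of Theorems~\ref{theomorr}--\ref{theoweight}.
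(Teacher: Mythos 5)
Your proposal follows the same route as the paper's proof of Theorem \ref{stb}: split $\vec u_0$ by the interpolation hypothesis into a small part in $L^2(w\,dx)$ (of size $\epsilon^{\theta}$) and a subcritical part in $H^{-\gamma}_r\subset B^{-\delta}_{\infty,\infty}$, $\delta=\gamma+\frac 3r<1$ (of size $\epsilon^{\theta-1}$); solve the Navier--Stokes equations mildly from the subcritical part; compare each weighted Leray solution with this smooth solution through the weighted energy inequality plus the energy identity for the smooth solution (using Propositions \ref{propweight} and \ref{propenerg} for the weighted energy bounds and the pressure estimates); run Gr\"onwall; and finally optimize $\epsilon\sim t^{\frac{1-\delta}{2(1-\theta)}}$, which gives exactly the paper's exponent $\eta=\theta\frac{1-\delta}{2(1-\theta)}$.

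One step, however, is asserted incorrectly: the claim that the mild solution $\vec w_\epsilon$ issued from $\vec u^{\sharp}_{0,\epsilon}$ lives on a time interval $T_1$ independent of $\epsilon$, ``because $\vec u_0\in{\rm bmo}^{-1}_0$ via the embedding''. Theorem \ref{stb} does not assume $\vec u_0\in{\rm bmo}^{-1}_0$, and the interpolation hypothesis does not provide it ($L^2(w\,dx)$ is not contained in ${\rm bmo}^{-1}$); moreover, even if $\vec u_0$ were in ${\rm bmo}^{-1}_0$, the datum actually fed to Koch--Tataru is $\vec u^{\sharp}_{0,\epsilon}=\vec u_0-\vec u^{\flat}_{0,\epsilon}$, and the perturbation $\vec u^{\flat}_{0,\epsilon}$ is only controlled in $L^2(w\,dx)$, which gives no $X_T$ smallness. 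The only available bound is the subcritical one, $\|e^{t\Delta}\vec u^{\sharp}_{0,\epsilon}\|_{X_{T_1}}\leq C\epsilon^{\theta-1}T_1^{\frac{1-\delta}2}$, which forces $T_1\sim\epsilon^{\frac{2(1-\theta)}{1-\delta}}$, i.e.\ a lifespan shrinking with $\epsilon$ --- this is exactly the bookkeeping in the paper, where $T_1<\min(1,C_4\epsilon^{\frac{2(1-\theta)}{1-\delta}})$ and later $T_3=2t$. The flaw is harmless to your conclusion: your final choice $\epsilon=t^{\frac{1-\delta}{2(1-\theta)}}$ is precisely the compatibility condition $t\lesssim T_1(\epsilon)$, and with $T_1(\epsilon)$ in place the Gr\"onwall exponent $\epsilon^{2(\theta-1)}T_1^{1-\delta}$ is bounded uniformly, so the estimate $\|\vec u_1(t,.)-\vec u_2(t,.)\|_{L^2(w\,dx)}\leq Ct^{\eta}$ follows with the same $\eta$. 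You should simply replace the uniform-$T_1$ claim by the $\epsilon$-dependent lifespan and verify $t<T_1(\epsilon)$ under the final scaling, as the paper does.
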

 
 \begin{proof} This theorem was proved by Barker \cite{Bar18} in the case $N=0$. Our proof will follow the same lines as Barker's proof. 
 
 As $\vec u_0\in [L^2(w\, dx), H^{-\gamma}_r]_{\theta,\infty}$, for every $\epsilon\in (0,1)$ we may split $\vec u_0$ in $\vec u_0=\vec v_{0,\epsilon}+\vec w_{0,\epsilon}$ with $\|\vec v_{0,\epsilon}\|_{H^{-\gamma}_r}\leq C_1 \epsilon^{\theta-1}$ and $\|\vec w_{0,\epsilon}\|_{L^2(w\, dx)}\leq C_1 \epsilon^\theta$, where $C_1$ depends only on $\vec u_0$. As $\vec u_0=\mathbb{P} \vec u_0$ and as $\mathbb{P}$ is continuous on $H^{-\gamma}_r$ and on $L^2(w\, dx)$, we may assume (changing the value of the constant $C_1$) that $\vec v_{0,\epsilon}$ and $\vec w_{0,\epsilon}$ are divergence free. Let $\delta=\gamma+\frac 3 r<1$. Since $H^{-\gamma}_r\subset B^{-\delta}_{\infty,\infty}$, we have for $0<t\leq 1$, $\| e^{t\Delta}\vec v_{0,\epsilon}\|_\infty\leq C_2 t^{-\delta/2} \epsilon^{\theta-1}$. If $0<T_1<1$, we have $$\sup_{0<t<T_1} \sqrt t \|e^{t\Delta}\vec v_{0,\epsilon}\|_\infty\leq C_2 \epsilon^{\theta-1} T_1^{\frac {1-\delta}2}$$
 and
 $$\sup_{0<t<T_1, x\in\mathbb{R}^3} \sqrt{\frac 1{t^{3/2}} \int_0^t\int_{B(x,\sqrt t)} \vert e^{t\Delta}\vec v_{0,\epsilon}\vert^2\, dx}\leq C_3 \epsilon^{\theta-1} T_1^{\frac {1-\delta}2}
 $$ so that $\|e^{t\Delta}\vec v_{0,\epsilon}\|_{X_{T_1}} \leq (C_2+C_3) \epsilon^{\theta-1} T_1^{\frac {1-\delta}2}<\frac 1{8C_0}$ if $T_1<\min(1,C_4 \epsilon^{\frac 2{1-\delta}(1-\theta)})$.
 
 By (the proof of) Theorem \ref{solbesov}, we know that the Navier--Stokes equations with initial value $\vec v_{0,\epsilon}$ will have a solution $\vec v_\epsilon$ on $(0,T_1)$ such that $\|\vec v_\epsilon(t,.)\|_\infty\leq C_5  t^{-\delta/2} \epsilon^{\theta-1}$.   Moreover, by Proposition \ref{propweight}, $\vec v_\epsilon$ is a weighted Leray weak solution. 
 
 Let $\vec u$ be a weighted Leray solution on $(0,T)$  for the Navier--Stokes equations with  initial value $\vec u_0$.  We are going to compare $\vec u$ and $\vec v_\epsilon$. We know that $\vec v_\epsilon$ is smooth, so that $\partial_t(\vec u\cdot\vec v_\epsilon)=\vec u\cdot \partial_t\vec v_\epsilon+ \vec v_\epsilon\cdot \partial_t\vec u$. If $p_\epsilon$ is the pressure associated to $\vec v_\epsilon$, we have on $(0,T_2)$ where $T_2=\min(T,T_1)$
 \begin{equation*} \begin{split} \partial_t(\vec u\cdot\vec v_\epsilon)=& \vec u\cdot \Delta\vec v_\epsilon+ \vec v_\epsilon\cdot \Delta\vec u -\Div(p_\epsilon \vec u+p\vec v_\epsilon)  -\vec u\cdot(\vec v_\epsilon\cdot\vec \nabla\vec v_\epsilon)-\vec v_\epsilon\cdot(\vec u\cdot\vec\nabla \vec u)
\\ =& \vec u\cdot \Delta\vec v_\epsilon+ \vec v_\epsilon\cdot \Delta\vec u -\Div(p_\epsilon \vec u+p\vec v_\epsilon) \\& -(\vec u-\vec v_\epsilon)\cdot(\vec v_\epsilon\cdot\vec \nabla\vec v_\epsilon)-\vec v_\epsilon\cdot(\vec u\cdot\vec\nabla (\vec u-\vec v_\epsilon))-\Div(\frac{\vert\vec v_\epsilon\vert^2}2(\vec u+\vec v_\epsilon))
\\ =& \vec u\cdot \Delta\vec v_\epsilon+ \vec v_\epsilon\cdot \Delta\vec u-\vec v_\epsilon\cdot((\vec u-\vec v_\epsilon)\cdot\vec\nabla (\vec u-\vec v_\epsilon))
\\& -\Div(p_\epsilon \vec u+p\vec v_\epsilon + \frac{\vert\vec v_\epsilon\vert^2}2(\vec u+\vec v_\epsilon)+ (\vec v_\epsilon\cdot (\vec u-\vec v_\epsilon))\vec v_\epsilon) . 
\end{split} \end{equation*}
As $\vec v_\epsilon\in L^2((0,T_2), L^\infty)$, this can be integrated on $(0,t)\times\mathbb{R}^3$ against the measure $w(x)\, dx\, ds$ and gives


 \begin{equation*} \begin{split}  \int \vec u\cdot\vec v_\epsilon \, w(x)\, dx&-\int \vec u_0\cdot\vec v_{0,\epsilon}\, w(x)\, dx\\= -\int_0^t\int \sum_{i=1}^3& \partial_iw(x) (\vec u(s,x)\cdot \partial_i \vec v_\epsilon(s,x)+\vec v_\epsilon(s,x)\cdot \partial_i \vec u(s,x))\, dx\, ds\\& -2\int_0^t\int (\vec\nabla\otimes\vec u(s,x)\cdot\vec\nabla\otimes\vec v_\epsilon(s,x))\, w(x)\, dx\, ds
 \\  - \int_0^t\int \vec v_\epsilon(s,x)\cdot&((\vec u(s,x)-\vec v_\epsilon(s,x))\cdot\vec\nabla(\vec u(s,x)-\vec v_\epsilon(s,x)))\, w(x)\, dx\, ds
 \\& +\int_0^t\int p(s,x) \vec v_\epsilon(s,x)\cdot \vec \nabla w(x)+   p_\epsilon(s,x) \vec u(s,x)\cdot \vec \nabla w(x) \, dx\, ds
 \\+\int_0^t\int \frac{\vert \vec v_\epsilon(s,x)\vert^2}2 (\vec u(s,x)&-\vec v_\epsilon(s,x))\cdot\vec\nabla w(x) + (\vec v_\epsilon(s,x)\cdot\vec u(s,x))\vec v_\epsilon(s,x)\cdot \vec\nabla w(x)\, dx\, ds.
\end{split} \end{equation*}

  Together with
   \begin{equation*}\begin{split}
 \int \vert \vec u(t,x)\vert^2\, w(x)\, dx &+2 \int_0^t \int \vert \vec \nabla\otimes\vec u(s,x)\vert^2\, w(x)\, dx\, ds
 \\ \leq   \int \vert \vec u_0(t,x)\vert^2&\, w(x)\, dx -2\sum_{i=1}^3 \int_0^t\int \partial_iw(s,x) \vec u(s,x)\cdot\partial_i\vec u(s,x)\, dx\, ds\\&+\int_0^t\int (\vert \vec u(s,x)\vert^2+2 p(s,x)) \vec u(s,x)\cdot\vec\nabla w(x)\, dx\, ds
 \end{split}\end{equation*}
 and
  \begin{equation*}\begin{split}
 \int \vert \vec v_\epsilon(t,x)\vert^2\, w(x)\, dx &+2 \int_0^t \int \vert \vec \nabla\otimes\vec  v_\epsilon((s,x)\vert^2\, w(x)\, dx\, ds
 \\ =   \int \vert \vec  v_{0,\epsilon}(t,x)\vert^2&\, w(x)\, dx -2\sum_{i=1}^3 \int_0^t\int \partial_iw(s,x) \vec  v_\epsilon((s,x)\cdot\partial_i\vec  v_\epsilon((s,x)\, dx\, ds\\&+\int_0^t\int (\vert \vec  v_\epsilon((s,x)\vert^2+2 p_\epsilon(s,x)) \vec  v_\epsilon((s,x)\cdot\vec\nabla w(x)\, dx\, ds,
 \end{split}\end{equation*}
 this gives
  \begin{equation*}\begin{split}
 \int \vert \vec v_\epsilon(t,x)-\vec u(t,x)\vert^2\, w(x)\, dx &+2 \int_0^t \int \vert \vec \nabla\otimes(\vec  v_\epsilon-\vec u)\vert^2\, w(x)\, dx\, ds
 \\ \leq   \int \vert \vec  v_{0,\epsilon}-\vec u_0\vert^2&\, w(x)\, dx -2\sum_{i=1}^3 \int_0^t\int \partial_iw  (\vec  v_\epsilon-\vec u)\cdot\partial_i(\vec  v_\epsilon-\vec u)\, dx\, ds\\ + 2 \int_0^t \int (p_\epsilon-p) &(\vec v_\epsilon-\vec u)\cdot\vec\nabla w\, dx\, ds
    -2 \int_0^t\int \vec v_\epsilon\cdot((\vec u -\vec v_\epsilon)\cdot\vec\nabla(\vec u-\vec v_\epsilon))\, w\, dx\, ds 
  \\ & + \int_0^t \int \vert \vec v_\epsilon-\vec u\vert^2 \vec v_\epsilon\cdot  \vec\nabla w +(\vert \vec u\vert^2-\vert \vec v_\epsilon\vert^2)(\vec u-\vec v_\epsilon)\cdot\vec\nabla w\, dx\, ds.\end{split}\end{equation*}
  Thus, we have
  
  \begin{equation*}\begin{split}
 \int \vert \vec v_\epsilon(t,x)-\vec u(t,x)\vert^2\, w(x)\, dx &+2 \int_0^t \int \vert \vec \nabla\otimes(\vec  v_\epsilon-\vec u)\vert^2\, w(x)\, dx\, ds
 \\ \leq   \int \vert \vec  v_{0,\epsilon}-\vec u_0\vert^2&\, w(x)\, dx  + C_6  \int_0^t    \|\sqrt w(\vec u -\vec v_\epsilon)\|_2  \| \sqrt w \vec\nabla(\vec u-\vec v_\epsilon)\|_2\, ds\\ &+ C_6  \int_0^t \|(p-p_\epsilon)\, w \|_{L^{6/5}}  \|  \sqrt w(\vec v_\epsilon-\vec u)\|_6 \, ds
 \\&  + C_6  \int_0^t  \|\vec v_\epsilon\|_\infty \|\sqrt w(\vec u -\vec v_\epsilon)\|_2  \| \sqrt w \vec\nabla\otimes(\vec u-\vec v_\epsilon)\|_2\, ds 
  \\ & +C_6  \int_0^t \|\sqrt w (\vec u-\vec v_\epsilon)\|_3^2   (\|\sqrt w \vec u\|_3+\|\sqrt w \vec v_\epsilon\|_3)  \, ds.\end{split}\end{equation*}
  We have $$\|w(p-p_\epsilon)\|_{6/5}\leq C_7 \|w(\vec u\otimes\vec u-\vec v_\epsilon\otimes\vec v_\epsilon\|_{6/5}\leq C_7 \|\sqrt w(\vec u-\vec v_\epsilon)\|_2 (\|\sqrt w\vec u\|_3+\|\sqrt w\vec v_\epsilon\|_3)$$ 
  $$ \|\sqrt w(\vec u-\vec v_\epsilon)\|_3^2\leq \|\sqrt w(\vec u-\vec v_\epsilon)|_2\|\sqrt w(\vec u-\vec v_\epsilon)\|_6$$
  and $$ \|\sqrt w(\vec u-\vec v_\epsilon)\|_6\leq C_8 \left(\|\sqrt w(\vec u-\vec v_\epsilon)\|_2+\|\sqrt w\vec\nabla\otimes(\vec u-\vec v_\epsilon)\|_2\right),$$ so that
  \begin{equation*}\begin{split}
\|\sqrt w(\vec u(t,.)-\vec v_\epsilon(t,.))\|_2^2 &+2 \int_0^t \int \|\sqrt w\, \vec \nabla\otimes(\vec  v_\epsilon-\vec u)\|_2^2\, ds
 \\ \leq   \|\sqrt w(\vec  v_{0,\epsilon}-\vec u_0)\|_2^2&  + C_9  \int_0^t    \|\sqrt w(\vec u -\vec v_\epsilon)\|_2  \| \sqrt w \vec\nabla(\vec u-\vec v_\epsilon)\|_2\, ds\\ + C_9  \int_0^t   (\|\sqrt w(\vec u-\vec v_\epsilon)\|_2&+\|\sqrt w\vec\nabla\otimes(\vec u-\vec v_\epsilon)\|_2 )\|\sqrt w(\vec u-\vec v_\epsilon)\|_2 (\|\sqrt w\vec u\|_3+\|\sqrt w\vec v_\epsilon\|_3) \, ds
 \\&  + C_9  \int_0^t  \|\vec v_\epsilon\|_\infty \|\sqrt w(\vec u -\vec v_\epsilon)\|_2  \| \sqrt w \vec\nabla\otimes(\vec u-\vec v_\epsilon)\|_2\, ds  
 \\ \leq   \|\sqrt w(\vec  v_{0,\epsilon}-\vec u_0)\|_2^2& + \int_0^t   \|\sqrt w\, \vec \nabla\otimes(\vec  v_\epsilon-\vec u)\|_2^2\, ds
\\&  + C_{10}  \int_0^t    \|\sqrt w(\vec u -\vec v_\epsilon)\|_2^2 (1+\|\sqrt w\vec u\|_3^2+\|\sqrt w\vec v_\epsilon\|_3^2+ \|\vec v_\epsilon\|_\infty^2 )   \, ds.  \end{split}\end{equation*} By Gonwall's lemma, we find that, for $0<t<T_2$, we have
$$ \|\sqrt w(\vec u(t,.)-\vec v_\epsilon(t,.))\|_2^2 \leq \|\sqrt w(\vec  v_{0,\epsilon}-\vec u_0)\|_2^2 e^{\int_0^{T_2} C_{10}( 1+\|\sqrt w\vec u\|_3^2+\|\sqrt w\vec v_\epsilon\|_3^2+ \|\vec v_\epsilon\|_\infty^2)\, ds}.$$
We know that $T_2\leq T$, $\int_0^{T_2} \|\sqrt w\vec u\|_3^2 \, ds \leq \int_0^T \|\sqrt w\vec u\|_3^2\, ds<+\infty$, and, by Propositions \ref{propweight} and \ref{propenerg},  $\int_0^{T_2} \|\sqrt w\vec v_\epsilon\|_3^2 \, ds \leq \int_0^{T_1} \|\sqrt w\vec v_\epsilon\|_3^2\, ds\leq C_{11} \|\vec u_0\|_{L^2(w\, dx)}^2$.  Finally, we have
$$ \int_0^{T_2} \|\vec v_\epsilon\|_\infty^2)\, ds \leq C_{12} \int_0^{T_1} t^{-\delta} \|\vec v_{0,\epsilon}\|_{B^{-\delta}_{\infty,\infty}}^2\, dt \leq C_{13} T_1^{1-\delta}\epsilon^{2(\theta-1)}\leq C_{14}.$$
Thus, we have
$$\|\sqrt w(\vec u(t,.)-\vec v_\epsilon(t,.))\|_2^2 \leq C_{15} \epsilon^{2\theta}.$$ $C_{15}$ depends only on $\vec u$ and $\vec u_0$.

We may now estimate $\|\vec u_1(t,.)-\vec u_2(t,.)\|_{L^2(w\, dx)}$ for two weighted Leray weak solutions defined on $(0,T)$. If $t\in (0,T)$, we define $\epsilon= (\frac 4 {C_4} t)^{\frac{1-\delta}{2(1-\theta)}}$ and $T_3=\frac 1 2 C_4 \epsilon^{\frac 2{1-\delta}(1-\theta)}=2t$. If $t$ is small enough, we have $0<\epsilon<1$ and $T_3<\min(1,T)$. Thus, we know that, for a constant $C$ that depends only on $\vec u_1$, $\vec u_2$ and $\vec u_0$, 
\begin{equation*}\begin{split} \|\vec u_1(t,.)-\vec u_2(t,.)\|_{L^2(w\, dx)} \leq& \|\vec u_1(t,.)-\vec v_\epsilon(t,.)\|_{L^2(w\, dx)} +\|\vec v_\epsilon(t,.)-\vec u_2(t,.)\|_{L^2(w\, dx)} 
\\ \leq & C \epsilon^\theta= C (\frac 4 {C_4} t)^{\theta\frac{1-\delta}{2(1-\theta)}}
\end{split}\end{equation*}
The theorem is proved.   \end{proof}
\section{Weak-strong uniqueness}

We may now prove  Theorem \ref{theoweight}. 

\begin{proof}Recall that we consider two  solutions  $\vec u$, $\vec v$  of the Navier--Stokes equations on $(0,T)$   with the same initial value $\vec u_0 $ such that:
\begin{itemize}
\item[$\bullet$]  $\vec u_0$ be a divergence-free vector field with $\vec u_0\in L^2\cap   {\rm bmo}^{-1}_0$
\item[$\bullet$] $\|e^{t\Delta}\vec u_0\|_{X_T}< \frac 1 {4C_0} $ 
\item[$\bullet$]  $\vec u$ is  the mild solution  of the Navier--Stokes equations with initial value $\vec u_0$ such that   $\|\vec u\|_{X_T}\leq \frac 1 {2C_0} $ 
\item[$\bullet$] for some  $N\geq 0$ , $2<q<+\infty$  { and } $0\leq  s<1-\frac 2 q$,   $$ \sup_{0<t<T} t^{s/2} \|\vec u\|_{L^q(\frac 1{(1+\vert x\vert)^N}\, dx)}<+\infty$$
\item[$\bullet$]  $\vec v$ is a suitable weak Leray solution of the Navier--Stokes equations.
\end{itemize}

We know, by Propositions \ref{propweight} and \ref{propenerg}, that the mild solution $\vec u$ is a  suitable weak Leray solution. In particular, we have $\sup_{0<t<T} \|\vec u(t,.)\|_2<+\infty$, while $\sup_{0<t<T} t^{1/2} \|\vec u(t,.)\|_\infty\leq  \|\vec u\|_{X_T}<+\infty$. Thus, $$\sup_{0<s<T} t^{\frac 1 2-\frac 1{q}}\|\vec u\|_q<+\infty.$$  If $0\leq \alpha\leq 1$, we find that 
$$ \sup_{0<t<T} (\sqrt t)^{(1-\alpha)(1-\frac 2 q)+\alpha s} \|\vec u\|_{L^q(\frac 1{(1+\vert x\vert)^{\alpha N}}\, dx)} <+\infty.$$
By Theorem \ref{solbesov}, we find that $$\vec u_0\in B^{-s_\alpha}_{L^q(\frac 1{(1+\vert x\vert)^{\alpha N}}\, dx,\infty}\text{ with } s_\alpha =(1-\alpha)(1-\frac 2 q)+\alpha s.$$
For $0<\alpha<\min(1,\frac 4{Nq})$, $0<s_\alpha<1-\frac 2 q$ and $\alpha N<\frac 4 q$, so that we may apply Corollary \ref{embed}. 

  The next step is to check that $\vec u$ and $\vec v$, that are suitable Leray weak solutions, are weighted Leray weak solutions, for the weight $w(x)=\frac 1{(1+\vert x\vert)^2}$.  It means that we must check that $\vec v$ (and $\vec u$) fulfills the weighted Leray energy inequality. We consider a non-negative function $\theta\in \mathcal{D}(\mathbb{R}^3)$ equal to $1$ on a neighborhood of $0$ and $0$ for $\vert x\vert\geq 1$ and a function $\alpha$ smooth  on $\mathbb{R}$, such that $0\leq \alpha\leq 1$, with $\alpha(t)$ equal to $0$ on $(\infty,0)$ and $1$ on $(1,+\infty)$. For   $0<t_0<t_1<T$, $R>0$ and $0<\epsilon< \min(t_1-t_0, T-t_1)$, we define the test function 
  $$\varphi_{t_0,t_1,\epsilon,R}(t,x)=\alpha(\frac{t-t_0}\epsilon) (1-\alpha(\frac{t-t_1}\epsilon))  \frac1 {(1+\sqrt{\frac 1{R^2}+x^2})^2} \theta(\frac x R)=\alpha_{t_0,t_1,\epsilon}(t) \theta_R(x)$$ which is non-negative and supported in $[t_0,t_1+\epsilon ]\times \overline{B(0,R)}$. We have, by suitability of $\vec v$, if $q$ is the pressure associated to the solution $\vec v$, 
  $$ \iint \varphi_{t_0,t_1,\epsilon,R} \left(\partial_t(\vert\vec v\vert^2)+ 2 \vert\vec\nabla\otimes\vec v\vert^2-\Delta(\vert\vec v\vert^2)+ \Div((2q+\vert\vec v\vert^2)\vec  v)\right)
  \, dx\, dt\leq 0.$$
  As , for $R\geq 1$, $\vert \theta_R\vert \leq C w$ and $\vert\vec\nabla \theta_R\vert\leq C w^{3/2}$, dominated convergence when $R$ goes to $+\infty$ gives us
\begin{equation*}\begin{split}& \iint  (\frac 1 \epsilon \alpha'(\frac{t-t_1}\epsilon)-\frac 1 \epsilon \alpha'(\frac{t-t_0}\epsilon)) \vert \vec v\vert^2\, w(x)\, dx\, dt + 2\iint \alpha_{t_0,t_1,\epsilon} \vert\vec\nabla\otimes\vec v\vert^2\, w(x)\, dx\, dt 
\\ \leq& -2 \sum_{i=1}^3 \iint  \alpha_{t_0,t_1,\epsilon}  \partial_iw (\vec v\cdot \partial_i\vec v) \, dx\, dt 
+\iint  \alpha_{t_0,t_1,\epsilon}  (\vert\vec v\vert^2+2q) \vec v\cdot \vec\nabla w\, dx\, dt \end{split}\end{equation*}  If $\epsilon$ goes to $0$, we get
\begin{equation*}\begin{split}& \limsup_{\epsilon\rightarrow 0}  \int  (\frac 1 \epsilon \alpha'(\frac{s-t_1}\epsilon)-\frac 1 \epsilon \alpha'(\frac{s-t_0}\epsilon))( \int \vert \vec v(s,x)\vert^2\, w(x)\, dx)\, ds  \\&+ 2\int_{t_0}^{t_1} \int\vert\vec\nabla\otimes\vec v\vert^2\, w(x)\, dx\, ds 
\\ \leq& -2 \sum_{i=1}^3 \int_{t_0}^{t_1} \int  \partial_iw (\vec v\cdot \partial_i\vec v) \, dx\, ds.
+\int_{t_0}^{t_1} \int (\vert\vec v\vert^2+2q) \vec v\cdot \vec\nabla w\, dx\, ds \end{split}\end{equation*}  For almost every $t_0$, $t_1$, we have that $t_0$ and $t_1$ are Lebesgue points of the map $s\mapsto \int \vert \vec v(s,x)\vert^2\, w(x))\, dx$, so that
\begin{equation*}\begin{split} \lim_{\epsilon\rightarrow 0}  \int  (\frac 1 \epsilon \alpha'(\frac{s-t_1}\epsilon)-\frac 1 \epsilon \alpha'(\frac{s-t_0}\epsilon))( \int \vert \vec v(s,x)\vert^2\, w(x)\, dx)\, ds \\=  \int \vert \vec v(t_1,x)\vert^2\, w(x)\, dx- \int \vert \vec v(t_0,x)\vert^2\, w(x)\, dx.\end{split}\end{equation*} 
 If $t_0$ goes to $0$ and $t_1$ goes to $t$, we have $\|\vec v(t_0,.)-\vec u_0\|_{L^2(w\, dx)}\leq \| \vec v(t_0,.)-\vec u_0\|_2\rightarrow 0$, so that $$\lim_{t_0\rightarrow 0} \int \vert \vec v(t_0,x)\vert^2\, w(x)\, dx=\int \vert \vec u_0(x)\vert^2\, w(x)\, dx$$ while $\vec v(t_1,.)$ is weakly convergent to $\vec v(t,.)$ so that
$$ \int \vert \vec v(t,x)\vert^2\, w(x)\, dx\leq\liminf_{t_1\rightarrow t}\int \vert \vec v(t_1,x)\vert^2\, w(x)\, dx.$$ Thus, we get the  weighted Leray energy inequality.
  
By Theorem \ref{stb}, we then know that  there exists $T_0>0$, $C\geq 0$  and $\eta>0$ such that, for $0\leq t\leq T_0$, 
 $$ \|\vec u(t,.)-\vec v(t,.)\|_{L^2(w\, dx)}\leq C t^\eta.$$
Moreover, we can do the same computations as in the proof of Theorem \ref{stb} in order to estimate $\partial_t(\vec u.\vec v)$ (since $\vec u$ is smooth)  and write, if $p$ is the pressure associated to $\vec u$ and $q$ the pressure associated to $\vec v$, 
 \begin{equation*} \begin{split} \partial_t(\vec u\cdot\vec v)=& \vec u\cdot \Delta\vec v+ \vec v\cdot \Delta\vec u-\vec u\cdot((\vec u-\vec v)\cdot\vec\nabla (\vec u-\vec v))
\\& -\Div(q \vec u+p\vec v + \frac{\vert\vec u\vert^2}2(\vec u+\vec v)+ (\vec u\cdot (\vec v-\vec u))\vec u) . 
\end{split} \end{equation*}

As $\vec u\in L^2((\epsilon,T), L^\infty)$ for every $\epsilon>0$, this can be integrated on $(\epsilon,t)\times\mathbb{R}^3$ against the measure $w(x)\, dx\, ds$ and gives


 \begin{equation*} \begin{split}  \int \vec u(t,x)\cdot&\vec v(t,x)  \, w(x)\, dx-\int \vec u(\epsilon,x)\cdot\vec v(\epsilon,x))\, w(x)\, dx\\=&-\int_\epsilon^t\int \sum_{i=1}^3 \partial_iw (\vec u\cdot \partial_i \vec v+\vec v\cdot \partial_i \vec u)\, dx\, ds\\& -2\int_\epsilon^t\int (\vec\nabla\otimes\vec u \cdot\vec\nabla\otimes\vec v)\, w(x)\, dx\, ds
 \\  - \int_\epsilon^t\int \vec u\cdot&((\vec u-\vec v)\cdot\vec\nabla(\vec u -\vec v))\, w(x)\, dx\, ds
 \\& +\int_\epsilon^t\int p  \vec v\cdot \vec \nabla w +   q \vec u\cdot \vec \nabla w \, dx\, ds
 \\+\int_\epsilon^t\int \frac{\vert \vec u\vert^2}2 (\vec v&-\vec u)\cdot\vec\nabla w + (\vec v\cdot\vec u)\vec u\cdot \vec\nabla w(x)\, dx\, ds.
\end{split} \end{equation*}
As $\vec u(\epsilon,.)$ and $\vec v(\epsilon,.)$ are strongly convergent to $\vec u_0$ in $L^2(w\, dx)$, we find
 \begin{equation*} \begin{split}  \int \vec u(t,x)\cdot&\vec v(t,x)  \, w(x)\, dx-\int \vec u_0\cdot\vec u_0\, w(x)\, dx\\=&-\int_0^t\int \sum_{i=1}^3 \partial_iw (\vec u\cdot \partial_i \vec v+\vec v\cdot \partial_i \vec u)\, dx\, ds\\& -2\int_0^t\int (\vec\nabla\otimes\vec u \cdot\vec\nabla\otimes\vec v)\, w \, dx\, ds
 \\  -\lim_{\epsilon\rightarrow 0} \int_\epsilon^t\int \vec u\cdot&((\vec u-\vec v)\cdot\vec\nabla(\vec u -\vec v))\, w\, dx\, ds
 \\& +\int_0^t\int p  \vec v\cdot \vec \nabla w +   q \vec u\cdot \vec \nabla w \, dx\, ds
 \\+\int_0^t\int \frac{\vert \vec u\vert^2}2 (\vec v&-\vec u)\cdot\vec\nabla w + (\vec v\cdot\vec u)\vec u\cdot \vec\nabla w\, dx\, ds.
\end{split} \end{equation*}
We have 
$$\lim_{\epsilon\rightarrow 0} \int_\epsilon^t\int \vec u\cdot((\vec u-\vec v)\cdot\vec\nabla(\vec u -\vec v))\, w\, dx\, ds=\int_0^t \int  s^\eta\vec u\cdot s^{-\eta}((\vec u-\vec v)\cdot\vec\nabla(\vec u -\vec v))\, w\, dx\, ds$$ as $s^\eta \vec u\in L^2 L^\infty$, $s^{-\eta}(\vec u-\vec v)\in L^\infty(L^2(w\, dx))$ and $\vec\nabla\otimes(\vec u-\vec v)\in L^2(L^2(w\, dx))$.
Using now the weighted Leray inequalities on $\vec v$ and on $\vec u$, we get   \begin{equation*}\begin{split}
 \int \vert \vec v(t,x)-\vec u(t,x)\vert^2\, w(x)\, dx &+2 \int_0^t \int \vert \vec \nabla\otimes(\vec  v-\vec u)\vert^2\, w\, dx\, ds
 \\ \leq   & -2\sum_{i=1}^3 \int_0^t\int \partial_iw  (\vec  v-\vec u)\cdot\partial_i(\vec  v-\vec u)\, dx\, ds\\ + 2 \int_0^t \int (q-p) &(\vec v-\vec u)\cdot\vec\nabla w\, dx\, ds
    -2 \int_0^t\int \vec u\cdot((\vec u -\vec v)\cdot\vec\nabla(\vec u-\vec v))\, w\, dx\, ds 
  \\ & + \int_0^t \int \vert \vec v-\vec u\vert^2 \vec u\cdot  \vec\nabla w +(\vert \vec u\vert^2-\vert \vec v\vert^2)(\vec u-\vec v)\cdot\vec\nabla w\, dx\, ds,\end{split}\end{equation*}
and thus  \begin{equation*}\begin{split}
 \int \vert \vec v(t,x)-\vec u(t,x)\vert^2\, w(x)\, dx &+2 \int_0^t \int \vert \vec \nabla\otimes(\vec  v-\vec u)\vert^2\, w\, dx\, ds
 \\ \leq &    C   \int_0^t    \|\sqrt w(\vec u -\vec v)\|_2  \| \sqrt w \vec\nabla(\vec u-\vec v)\|_2\, ds\\ &+ C  \int_0^t \|(p-q)\, w \|_{L^{6/5}}  \|  \sqrt w(\vec v-\vec u)\|_6 \, ds
 \\&  + C   \int_0^t  \|\vec u\|_\infty \|\sqrt w(\vec u -\vec v)\|_2  \| \sqrt w \vec\nabla\otimes(\vec u-\vec v)\|_2\, ds 
  \\ & +C  \int_0^t \|\sqrt w (\vec u-\vec v)\|_3^2   (\|\sqrt w \vec u\|_3+\|\sqrt w \vec v\|_3)  \, ds.\end{split}\end{equation*}
At this point, we get

  \begin{equation*}\begin{split}
\|\sqrt w(\vec u(t,.)-\vec v(t,.))\|_2^2 &+2 \int_0^t \int \|\sqrt w\, \vec \nabla\otimes(\vec  v-\vec u)\|_2^2\, ds
 \\ \leq    &  \int_0^t   \|\sqrt w\, \vec \nabla\otimes(\vec  v-\vec u)\|_2^2\, ds
\\&  + C  \int_0^t    \|\sqrt w(\vec u -\vec v)\|_2^2 (1+\|\sqrt w\vec u\|_3^2+\|\sqrt w\vec v\|_3^2+ \|\vec u\|_\infty^2 )   \, ds.  \end{split}\end{equation*} 
Let $$A(t)= t^{-2\eta} \|\sqrt w(\vec u(t,.)-\vec v(t,.))\|_2^2 $$ and $$B(t)=1+\|\sqrt w\vec u\|_3^2+\|\sqrt w\vec v\|_3^2.$$ We have
$$ A(t)\leq C \int_0^t A(s) B(s)\, ds+ Ct^{-2\eta} \int_0^t A(s) s^{2\eta}  \|\vec u\|_\infty^2    \, ds.  $$
Thus, for $0<t<\tau<T$,
$$ A(t)\leq C  \sup_{0<s<\tau} A(s) (\int_0^\tau B(s)\, ds+ \frac 1{2\eta} \sup_{0<s<\tau}  s \|\vec u(s,.)\|_\infty^2 ).$$ For $\tau$ small enough, we have
$$   C    (\int_0^\tau B(s)\, ds+ \frac 1{2\eta} \sup_{0<s<\tau}  s \|\vec u(s,.)\|_\infty^2 )<1$$ and thus $\sup_{0<t<\tau} A(t)=0$. We conclude that $\vec u=\vec v$ on $[0,\tau]$. Since $\vec u$ is bounded on $[\tau, T]$, then uniqueness is easily extended to the whole interval $[0,T]$.
 \end{proof}

\section{Further comments on Barker's conjecture}

In his paper \cite{Bar18},  Barker   raised the following question :

\begin{question} If $\vec u_0$ belongs to $L^2\cap {\rm bmo}_0^{-1}$, does there exists a positive time $T$ such that every weak Leray solution of the Cauchy problem for the Navier--Stokes equations with $\vec u_0$ as initial value coincide with the mild solution in $X_T$ ?
\end{question}

This can be seen as the endpoint case of the Prodi--Serrin weak-strong uniqueness criterion, as the assumption  of Prodi--Serrin's criterion, i.e. existence of a solution $\vec u$ such that  $$\vec u\in   L^p_t L^q_x\text{ with } \frac 2 p+\frac 3 q\leq1 \text{ and } 2\leq p\leq +\infty,$$  is equivalent, if $2<p<+\infty$, to the fact that $\vec u_0$ belongs to $  B^{-1+\frac 3 q}_{q,p}\subset  {\rm bmo}_0^{-1}$. Existence of a mild solution when $\vec u_0$ belongs to $  B^{-1+\frac 3 q}_{q,p}$ goes back to the paper of Fabes, Jones and Rivi\`ere \cite{FJR72}. Existence of mild solutions has been extended by Cannone \cite{Can95} to the case of $  B^{-1+\frac 3 q}_{q,\infty}\cap   {\rm bmo}_0^{-1}$, and Koch and Tataru's theorem \cite{KoT01} can be seen as the endpoint case of the theory for existence of mild solutions.

Barker \cite{Bar18} extended weak-strong uniqueness to the case  $B^{-1+\frac 3 q}_{q,\infty}\cap   {\rm bmo}_0^{-1}$, and could even relax the regularity exponent and consider the case  $B^{-s}_{q,\infty}\cap   {\rm bmo}_0^{-1}$ with $s<1-\frac 2 q$. We have shown that the integrability could even be relaxed into 
 $B^{-1+\frac 3 q}_{L^q(\frac 1{(1+\vert x\vert)^N}\, dx),\infty}\cap   {\rm bmo}_0^{-1}$ with $N\geq 0$ and $s<1-\frac 2 q$. But under the sole assumption $\vec u_0\in L^2\cap {\rm bmo}^{-1}_0$, weak-strong uniqueness remains an open question.
 
 An alternative way to study the problem is to impose restrictions on the class of solutions, beyond the Leray energy inequality or the local Leray energy inequality. One may for instance consider an approximation process that provides weak Leray solutions when $\vec u_0\in L^2$ and consider whether the solutions provided by this process coincide with the mild solution when moreover $\vec u_0\in  {\rm bmo}^{-1}_0$. There are many processes that pave the way to Leray solutions (and in most cases to suitable weak Leray solutions); in \cite{Lem16}, we described fourteen different processes (including $\alpha$-models, frequency cut-off, damping, artificial viscosity, hyperviscosity,\dots). 
 
 The scheme is always the same. One approximates the Navier--Stokes equations (NS) by equations (NS$_\alpha$) depending on a small parameter $\alpha\in (0,1)$. Equations (NS$_\alpha$)  with initial value $\vec u_0\in L^2$ have  a unique solution $\vec u_\alpha$. One then establishes an energy (in)equality that allows to control $\vec u_\alpha$ uniformly on $L^\infty((0,T),L^2)\cap L^2((0,T),H^1)$. Moreover, one proves that $\partial_t\vec u_\alpha$ is controlled uniformly in $L^{6/5}((0,T),H^{-3})$.  By the Aubin--Lions theorem, there exists a sequence $\alpha_k\rightarrow 0$ such that $\vec u_{\alpha_k}$ is weakly convergent in $L^2((0,T),H^1)$ and strongly convergent in $(L^2((0,T)\times\mathbb{R}^3))_{\rm loc}$ to a limit $\vec v$. One then checks that $\vec v$ is a weak Leray solution of the Navier--Stokes equations with initial value $\vec u_0$.
 
 Some of those processes behave well for initial values $\vec u_0\in  {\rm bmo}^{-1}_0$, others don't seem to be well adapted to such initial values. More precisely, if one can prove that, when $\vec u_0$ belongs to $L^2\cap {\rm bmo}^{-1}_0$, there exists a time $T_0$ such that the solutions $\vec u_\alpha$ remain small in $X_{T_0}$ ($\|e^{t\Delta}\vec u_0\|_{X_{T_0}}<\eta<\frac 1{4_0}$ and $\sup_{\alpha\in (0,1)} \|\vec u_\alpha\|_{X_{T_0}}\leq 2\eta\leq   \frac 1 {2C_0}$), then the weak limit $\vec v$ will still remain controlled in $X_{T_0}$. But there is only one weak solution $\vec u$ in  $X_{T_0}$ such that $ \|\vec u\|_{X_{T_0}}\leq  \frac 1 {2C_0}$. Thus, the process cannot create a Leray solution that would escape the weak--strong uniqueness.
 
 Such processes can be found in processes that mimick Leray's mollification. 
Mollicication has been introduced by Leray \cite{Ler34} in his seminal paper on weak solutions for the Navier--Stokes equations. The approximated problem he considered  is the following one: solve
 $$\partial_t \vec u_\alpha+(\varphi_\alpha*\vec u_\alpha).\vec\nabla \vec u_\alpha=\Delta\vec u_\alpha -\vec\nabla p_\alpha$$ with $\text{ div }\vec u_\alpha=0$ and $\vec u_\alpha(0,.)=\vec u_0$. Here, $\varphi\in \mathcal{D}$, $\varphi\geq 0$, $\int\varphi\, dx=1$ and $\varphi_\alpha(x)=\frac 1{\alpha^3} \varphi(\frac x \alpha)$.
 Solving the mollified problem amounts to solve   the following integro-differential problem : 
\[ \vec  v=e^{t\Delta}\vec u_0-B(\varphi_\alpha*\vec v,\vec v)(t,x)\] where
\begin{equation*} B(\vec v,\vec w)=\int_0^t e^{(t-s)\Delta} \mathbb{P}\Div (\vec  v\otimes\vec w) \, ds.\end{equation*}   
Since $\|\varphi_\alpha*\vec v(t,.)\|_\infty\leq \|\vec v(t,.)\|_\infty$ and
\begin{equation*}\begin{split} (\int_0^t \int_{B(x_0,\sqrt t)} &\vert \varphi_\alpha*\vec v(s,.)(y)\vert^2\, dy\, ds)^{1/2}
\\=&  (\int_0^t \int_{B(x_0,\sqrt t)} \vert  \int \varphi_\alpha(z) \vec v(s,y-z)\, dz\vert^2\, dy\, ds)^{1/2}
\\\leq&  (\int_0^t \int_{B(x_0,\sqrt t)}\int       \varphi_\alpha(z)  \vert \vec v(s,y-z)\vert^2 \, dz  \, dy\, ds)^{1/2}
\\=&  (  \int \varphi_\alpha(z)  \left( \int_0^t \int_{B(x_0+z,\sqrt t)} \vert\vec v(s,y) \vert^2\, dy\, ds\right)\, dz)^{1/2},
\end{split}\end{equation*}
 we find that $\|\varphi_\alpha*\vec v\|_{X_T}\leq \|\vec v\|_{X_T}$.  Thus, the theorem of Koch and Tataru (Theorem \ref{theoKT} and Corollary \ref{kocht}) still applies :
 \begin{itemize}
 \item[$\bullet$] for every $\alpha>0$ and every $T>0$, we have
\[ \|B(\varphi_\alpha\vec v,\vec w)\|_{X_T}\leq C_0 \|\vec  v\|_{X_T}\|\vec w\|_{X_T}.\]
\item [$\bullet$] If $\|e^{t\Delta}\vec u_0\|_{X_T}<\frac 1{4C_0}$, then the mollified Navier--Stokes equations have a solution on $(0,T)$ such that $\|\vec u_\alpha\|_{X_T}\leq 2\| e^{t\Delta}\vec u_0\|_{X_T}$. \end{itemize}

Now, we may consider various other approximations of the Navier--Stokes equations of the form
\begin{equation}\label{eqalpha}\vec  v=e^{t\Delta}\vec u_0-\sum_{i=1}^N \varphi_{i,\alpha}*B_i(\psi_{i,\alpha}*\vec v,\chi_{i,\alpha}*\vec v)(t,x)  \end{equation} where 
\begin{itemize}
\item[$\bullet$] $\varphi_i$, $\psi_i$, $\chi_i$  are either the Dirac mass or functions in $L^1$
\item[$\bullet$]  $f_\alpha(x)=\frac 1{\alpha^3} f(\frac x \alpha)$ for $f\in \{\varphi_i, \psi_i,\chi_i, i=1,\dots N\}$
\item[$\bullet$] $B_i(\vec v,\vec w)=\int_0^t e^{(t-s)\Delta} \sigma_i(D)(\vec v\otimes\vec w)\, ds$ where $\sigma_i$ is given convolutions with smooth Fourier multipliers homogeneous of degree $1$: if $\vec z=\sigma_i(D)(\vec v\otimes\vec w)$, $z_k=\sum_{p,q\leq 3} K_{i,k,p,q}*(v_pw_q)$ where the Fourier transform of $K_{i,k,p,q}$ is  and homogenous of degree $1$ and is smooth on $\mathbb{R}^3$.
\end{itemize}
The proof of the  Koch and Tataru theorem asserts that operators of the form $B(\vec v,\vec w)=\int_0^t e^{(t-s)\Delta} \sigma(D)(\vec v\otimes\vec w)\, ds$ are bounded on $X_T$.

 Writing $\|\delta\|_1=1$, we have  
\begin{equation*}\begin{split} \|\sum_{i=1}^N \varphi_{i,\alpha}*B(\psi_{i,\alpha}*\vec v,\chi_{i,\alpha}*\vec v)(t,x)\|_{X_T} \\ \leq  (\sum_{i=1}^N \|B_i\|_{\rm op}\|\varphi_i\|_1\|\psi_i\|_1\|\chi_i\|_1) \|\vec  v\|_{X_T}\|\vec w\|_{X_T}
\\ =    C_1   \|\vec  v\|_{X_T}\|\vec w\|_{X_T} \end{split}\end{equation*}
 If $\|e^{t\Delta}\vec u_0\|_{X_T}<\frac 1{4C_1}$, then the modified equations (\ref{eqalpha})  have a solution on $(0,T)$ such that $\|\vec u_\alpha\|_{X_T}\leq 2\| e^{t\Delta}\vec u_0\|_{X_T}$. 
 
 Remark that the equations (\ref{eqalpha}) can be written as well
 $$ \partial_t\vec v= \Delta\vec v-\sum_{i=1}^N \varphi_{i,\alpha}*\sigma_i(D)((\psi_{i,\alpha}*\vec v)\otimes(\chi_{i,\alpha}*\vec v))$$ with initial value $\vec v(0,.)=\vec u_0$. 
Among example of such approximations, we have the various $\alpha$-models studied by Holm and Titi: 
\begin{itemize}
\item[$\bullet$] 
 The Leray-$\alpha$ model.\\
  The Leray--$\alpha$ model   has been discussed  in 2005 
       by Cheskidov, Holm,  Olson and Titi    \cite{CHOT05}.  The approximated problem is the following one: solve
 $$\partial_t \vec u_\alpha+((\Id-\alpha^2\Delta)^{-1}\vec u_\alpha).\vec\nabla \vec u_\alpha=\Delta\vec u_\alpha -\vec\nabla p_\alpha$$ with $\text{ div }\vec u_\alpha=0$ and $\vec u_\alpha(0,.)=\vec u_0$.  
  This is equivalent to write
 $$\partial_t \vec u_\alpha =\Delta\vec u_\alpha - \mathbb{P}\Div(((\Id-\alpha^2\Delta)^{-1}\vec u_\alpha)\otimes\vec u_\alpha). $$ 
 
\item[$\bullet$] 
 The Navier--Stokes-$\alpha$ model.\\
 The mathematical study of the Navier--Stokes-$\alpha$ model has been done  by Foias, Holm and Titi   in 2002\cite{FHT02}.  The approximated problem is the following one: solve
 $$\partial_t \vec u_\alpha+((\Id-\alpha^2\Delta)^{-1}\vec u_\alpha).\vec\nabla \vec u_\alpha=\Delta\vec u_\alpha-  \sum_{k=1}^3 u_{\alpha,k}\vec\nabla (\Id-\alpha^2\Delta)^{-1} u_{\alpha,k} -\vec\nabla p_\alpha$$ with $\text{ div }\vec u_\alpha=0$ and $\vec u_\alpha(0,.)=\vec u_0$.  We can rewrite the equation  as
\begin{equation*}\begin{split}  \partial_t \vec u_\alpha+((\Id-\alpha^2\Delta)^{-1}\vec u_\alpha).\vec\nabla \vec u_\alpha=&\Delta\vec u_\alpha-  \sum_{k=1}^3 ( \alpha^2 \Delta (\Id-\alpha^2\Delta)^{-1} u_{\alpha,k})\vec\nabla (\Id-\alpha^2\Delta)^{-1} u_{\alpha,k}\\& -\vec\nabla ( p_\alpha+\frac {\vert ((\Id-\alpha^2\Delta)^{-1}\vec u_\alpha\vert^2}2) \end{split}\end{equation*} 
  This is equivalent to write
\begin{equation*}\begin{split}  \partial_t \vec u_\alpha =\Delta\vec u_\alpha &- \mathbb{P}\Div(((\Id-\alpha^2\Delta)^{-1}\vec u_\alpha)\otimes\vec u_\alpha)\\&- \sum_{j=1}^3\sum_{k=1}^3  \mathbb{P}\partial_j ((\alpha\partial_j(\Id-\alpha^2\Delta)^{-1} u_{\alpha,k}) (\alpha\vec\nabla ( \Id-\alpha^2\Delta)^{-1} u_{\alpha,k} )).  \end{split}\end{equation*}

\item[$\bullet$] 
 The Clark-$\alpha$ model.\\
 The Clark-$\alpha$ model has been discussed in 2005 by Cao, Holm and Titi \cite{CHT05}. The approximated problem is the following one: solve
\begin{equation*}\begin{split} \partial_t\vec u_\alpha+ (\Id-\alpha^2\Delta)^{-1}\vec u_\alpha.\vec\nabla\, \vec u_\alpha=&   \Delta\vec u_\alpha +((\Id-\alpha^2\Delta)^{-1}\vec u_\alpha-\vec u_\alpha)\cdot\vec\nabla (\Id-\alpha^2\Delta)^{-1}\vec u_\alpha \\+\alpha^2\sum_{k=1}^3 &( \partial_k(\Id-\alpha^2\Delta)^{-1}\vec u_\alpha) \cdot\vec\nabla (\partial_k(\Id-\alpha^2\Delta)^{-1}\vec u_\alpha)   -\vec\nabla p_\alpha
\end{split}\end{equation*}   with $\text{ div }\vec u_\alpha=0$ and $\vec u_\alpha(0,.)=\vec u_0$.  
  We can rewrite the equation  as
\begin{equation*}\begin{split}  \partial_t \vec u_\alpha&+((\Id-\alpha^2\Delta)^{-1}\vec u_\alpha).\vec\nabla \vec u_\alpha\\ =&\Delta\vec u_\alpha+  \sum_{k=1}^3 \alpha^2\partial_k \left(   (\partial_k(\Id-\alpha^2\Delta)^{-1} \vec u_{\alpha}))\cdot\vec\nabla (\Id-\alpha^2\Delta)^{-1} \vec u_{\alpha}\right)  -\vec\nabla.  p_\alpha  \end{split}\end{equation*}  
  This is equivalent to write
\begin{equation*}\begin{split}  \partial_t \vec u_\alpha =\Delta\vec u_\alpha &- \mathbb{P}\Div(((\Id-\alpha^2\Delta)^{-1}\vec u_\alpha)\otimes\vec u_\alpha)\\&-  \sum_{k=1}^3  \mathbb{P}\partial_j ((\alpha\partial_k(\Id-\alpha^2\Delta)^{-1} \vec u_{\alpha})\cdot (\alpha\vec\nabla ( \Id-\alpha^2\Delta)^{-1} \vec u_{\alpha} )).  \end{split}\end{equation*}

\item[$\bullet$] 
 The simplified Bardinal model.\\ The simplified Bardina model  is another $\alpha$-model studied by Cao, Lunasin and Titi  in 2006 \cite{CLT06}. This model  is given by
\begin{equation*} \partial_t\vec u_\alpha+ ((\Id-\alpha^2\Delta)^{-1}\vec u_\alpha)\cdot \vec\nabla\,((\Id-\alpha^2\Delta)^{-1} \vec u_\alpha)=   \Delta\vec u_\alpha   -\vec\nabla p_\alpha
\end{equation*}  
where we have again  $\Div\vec u_\alpha=0$ and $\vec u_\alpha(0,.)=\vec u_0$.  
  \begin{equation*}\begin{split}  \partial_t \vec u_\alpha =\Delta\vec u_\alpha &- \mathbb{P}\Div(((\Id-\alpha^2\Delta)^{-1}\vec u_\alpha)\otimes ((\Id-\alpha^2\Delta)^{-1}\vec u_\alpha)).  \end{split}\end{equation*} 
  \end{itemize}

     Thus, when $\vec u_0\in {\rm bmo}^{-1}_0$, all those $\alpha$-models give back the mild solution $\vec u\in X_T$ when $\alpha$ goes to $0$.

\end{document}